 \title{Chow groups of smooth varieties fibred by quadrics}
\author{Charles Vial}
 \date{}
\begin{document}

\maketitle

\begin{abstract} Let $f : X \r B$ be a proper flat dominant morphism
  between two smooth quasi-projective complex varieties $X$ and $B$.
  Assume that there exists an integer $l$ such that all closed fibres
  $X_b$ of $f$ satisfy $CH_0(X_b) = CH_1(X_b) = \ldots = CH_l(X_b) =
  \Q$. Then we prove an analogue of the projective bundle formula for
  $CH_i(X)$ for $i \leq l$. When $B$ is a surface, $X$ is projective
  and $l = \lfloor \frac{\dim X - 3}{2} \rfloor$, this makes it
  possible to construct a Chow-K\"unneth decomposition for $X$ that
  satisfies Murre's conjectures. For instance we prove Murre's
  conjectures for complex smooth projective varieties $X$ fibred over
  a surface (via a flat morphism) by quadrics, or by complete
  intersections of dimension $4$ of bidegree $(2,2)$.
\end{abstract}

\section*{Introduction}

Let $X$ be a smooth projective complex variety of dimension $d_X$. We
write $H_i(X)$ for the rational homology group $H_i(X,\Q)$, this group
is isomorphic to $H^i(X,\Q)^\vee$. The group $CH_i(X)$ denotes the
rational Chow group of $i$-cycles on $X$ modulo rational equivalence.
It comes with a cycle class map $cl_i : CH_i(X) \r H_{2i}(X)$.\medskip

This article is concerned with Jacob Murre's Chow-K\"unneth
decomposition problem for smooth projective varieties.
In \cite{Murre1}, Murre conjectured the following. \medskip

(A) $X$ has a Chow-K\"unneth decomposition $\{\pi_0, \ldots,
\pi_{2d}\}$ : There exist mutually orthogonal idempotents $\pi_0,
\ldots, \pi_{2d} \in CH_{d_X}(X \times X)$ adding to the identity such
that $(\pi_i)_*H_*(X)=H_i(X)$ for all $i$.

(B) $\pi_0, \ldots, \pi_{2l-1},\pi_{d+l+1}, \ldots, \pi_{2d}$ act
trivially on $CH_l(X)$ for all $l$.

(C) $F^iCH_l(X) := \ker(\pi_{2l}) \cap \ldots \cap \ker(\pi_{2l+i-1})$
doesn't depend on the choice of the $\pi_j$'s. Here the $\pi_j$'s are
acting on $CH_l(X)$.

(D) $F^1CH_l(X) = CH_l(X)_\hom$. \medskip

A variety $X$ that satisfies conjectures (A), (B) and (D) is said to
have a \emph{Murre decomposition}. If moreover the Chow-K\"unneth
decomposition of conjecture (A) can be chosen so that $\pi_i =
{}^t\pi_{2d-i} \in CH_{d_X}(X \times X)$, then $X$ is said to have a
\emph{self-dual Murre decomposition}. The relevance of Murre's
conjectures were demonstrated by Jannsen who proved \cite{Jannsen}
that these are true for all smooth projective varieties if and only if
Bloch and Beilinson's conjecture is true for all smooth projective
varieties. \medskip

Here we are mainly interested in families of quadric hypersurfaces,
although some of the results can be stated in more generality. Our
strategy for constructing Chow-K\"unneth projectors consists in first
computing the Chow groups of the total space $X$.  In \cite{Vial5}, we
already proved

\begin{theorem2} [Theorem 3.4 in  \cite{Vial5}]
  Let $f : X \r B$ be a complex projective dominant morphism onto a
  complex quasi-projective variety $B$ of dimension $d_B$. Assume that
  there is an integer $l$ such that $CH_i(X_b)=\Q$ for all $i\leq l$
  and all closed points $b \in B$.  Then $CH_i(X)$ has niveau $\leq
  d_B$, i.e. it is supported in dimension $i+d_B$, for all $i \leq l$.
\end{theorem2}

Examples for which the theorem above applies are given by varieties
fibred by complete intersections of very low degree. For instance, if
$Q$ is a quadric hypersurface, then we know that $CH_i(Q)=\Q$ for all
$i < \frac{\dim Q}{2}$.  The above theorem then makes it possible to
establish some of the conjectures on algebraic cycles for smooth
projective varieties fibred by quadrics:

\begin{theorem2} [Theorem 4.2 in \cite{Vial5}]
  Let $X$ be a smooth projective complex variety fibred by quadric
  hypersurfaces over a smooth projective variety $B$.  Then \medskip

  $\bullet$ if $\dim B \leq 1$, $X$ is Kimura finite-dimensional
  \cite{Kimura} and satisfies Murre's conjectures;

  $\bullet$ if $\dim B \leq 2$, $X$ satisfies Grothendieck's standard
  conjectures;

  $\bullet$ if $\dim B \leq 3$, $X$ satisfies the Hodge conjecture.
\end{theorem2}

Since smooth projective surfaces have a Murre decomposition
\cite{Murre}, it is natural to seek for a Murre decomposition for
smooth projective varieties fibred by quadrics over a surface. It
turns out that, when $f : X \r B$ is a complex projective flat
morphism from a smooth variety $X$ to a smooth quasi-projective $B$
whose closed fibres are quadrics, it is possible to compute explicitly
most Chow groups of $X$ in terms of the Chow groups of $B$. Precisely,
in section \ref{hypsection}, we prove an analogue of the projective
bundle formula for Chow groups :

\begin{theorem2} \label{projbundle}[Corollary to Theorem \ref{main}]
  Let $f : X \r B$ be a projective flat dominant morphism from a smooth
  quasi-projective complex variety $X$ to a smooth quasi-projective
  complex variety $B$ of dimension $d_B$.  Let $l \geq 0$ be an
  integer.  Assume that
  $$CH_{l-i}(X_b)= \Q$$ for all $0 \leq i \leq \min (l,d_B)$ and for
  all closed points $b$ of $B$. Then $CH_l(X)$ is isomorphic to
  $\bigoplus_{i=0}^{d_X-d_B} CH_{l-i}(B)$ via the action of
  correspondences.
\end{theorem2}

When the closed fibres of $f$ are quadrics, we thus obtain that $CH_l(X)$
is isomorphic to $\bigoplus_{i=0}^{d_X-d_B} CH_{l-i}(B)$ in a strong
sense for all $l \leq \frac{d_X-d_B-1}{2}$. When furthermore $B$ is a
surface, theorem \ref{projbundle} is the prerequisite for constructing
idempotents in $CH_{d_X}(X \times X)$. In section \ref{quadrics}, we
carry out the construction of a Chow-K\"unneth decomposition for $X$
fibred by quadrics over a surface and prove

 \begin{theorem2} [Corollary to Theorem \ref{Murrequadrics}] Let $X$
   be a smooth projective complex variety which is the total space of
   a flat family of quadrics over a smooth projective curve or
   surface.  Then $X$ has a self-dual Murre decomposition which
   satisfies the motivic Lefschetz conjecture.
\end{theorem2}

This theorem generalises a previous result of del Angel and
M\"uller-Stach \cite{dAMS} where a Murre decomposition was constructed
for threefolds fibred by conics over a surface.\medskip

The {motivic Lefschetz conjecture} stipulates, for $\{\pi_i, 0 \leq i
\leq 2d_X\}$ a Chow-K\"unneth decomposition for $X$, that the
morphisms of Chow motives $(X,\pi_{2d_X-i}^\hom) \r
(X,\pi_{i}^\hom,d_X-i)$ induced by intersecting $d_X-i$ times with a
hyperplane section are isomorphisms for all $0 \leq i \leq d_X$. The
motivic Lefschetz conjecture follows from a combination of Kimura's
finite-dimensionality conjecture with the Lefschetz standard
conjecture. It should be noted that, in order to prove theorem
\ref{Murrequadrics}, no reference to Kimura's finite-dimensionality
property is used.  Furthermore, it doesn't seem possible to prove
theorem \ref{Murrequadrics} by using the approach of
Gordon-Hanamura-Murre \cite{GHM}, as in \emph{loc. cit.} $f$ is
assumed to be smooth away from a finite number of points and to have a
relative Chow-K\"unneth decomposition. Here we only assume $f$ to be
flat and in the proof of theorem \ref{Murrequadrics} we don't consider
the existence of a relative Chow-K\"unneth decomposition for $f$.

\paragraph{Notations.} Chow groups are always meant with rational
coefficients. The group $CH_i(X)$ is the $\Q$-vectorspace with basis
the $i$-dimensional irreducible subschemes of $X$ modulo rational
equivalence.

In section \ref{quadrics}, motives are defined in a covariant setting
and the notations are those of \cite{VialCK}.  Briefly, a Chow motive
$M$ is a triple $(X,p,n)$ where $X$ is a variety of pure dimension
$d$, $p \in CH_d(X\times X)$ is an idempotent ($p\circ p = p$) and $n$
is an integer. The motive of $X$ is denoted $\h(X)$ and by definition
is the motive $(X,\Delta_X,0)$ where $\Delta_X$ is the class in
$CH_{d_X}(X \times X)$ of the diagonal in $X \times X$.  A morphism
between two motives $(X,p,n)$ and $(Y,q,m)$ is a correspondence in $q
\circ CH_{d+n-m} (X \times Y) \circ p$. If $f : X \r Y$ is a morphism,
$\Gamma_f \in CH_{d}(X \times Y)$ is the class of the graph of $f$. By
definition we have $CH_i(X,p,n) = p_*CH_{i-n}(X)$ and $H_i(X,p,n) =
p_*H_{i-2n}(X)$, where we write $H_i(X) := H^{2d-i}(X(\C),\Q)$ for
singular homology.

\paragraph{Acknowledgements.}  This work is supported by a Thomas
Nevile Research Fellowship at Magdalene College, Cambridge and an
EPSRC Postdoctoral Fellowship under grant EP/H028870/1. I would like
to thank both institutions for their support.

\section{Chow groups for varieties fibred by varieties with Chow
  groups generated by hyperplane sections} \label{hypsection}

We establish a formula that is analogous to the projective bundle
formula for Chow groups.  \medskip

For $X$ a projective variety, $h : CH_l(X) \r CH_{l-1}(X)$ denotes the
intersection with a hyperplane section of $X$. It is well-defined
\cite[Cor. 21.10]{Voisin}. When $X$ is also smooth, consider a smooth
linear hyperplane section $\iota : H \hookrightarrow X$. Let's write
$\Delta_H$ for the diagonal inside $H \times H$. The map $h$ is then
induced by the correspondence $(\iota \times \iota)_*[\Delta_H] \in
CH_{d_X-1}(X \times X)$ that we also denote $h$.

\begin{lemma} \label{dominant} Let $f : X \r B$ be a projective dominant
  morphism between two smooth quasi-projective varieties of respective
  dimension $d_X$ and $d_B$. Then there exists a non-zero integer $n$
  such that $f_*h^{d_X-d_B}f^* : CH_0(B) \r CH_0(B)$ is multiplication
  by $n$. If moreover $B$ is projective, then
  $$\Gamma_f \circ h^{d_X -d_B} \circ {}^t\Gamma_f = n \cdot \Delta_B
  \in CH_{d_B}(B \times B).$$
\end{lemma}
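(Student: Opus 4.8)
The plan is to reduce the lemma to the elementary fact that a dominant, generically finite morphism $g$ of degree $n$ between smooth varieties of the same dimension satisfies $g_{*}g^{*}=n\cdot\mathrm{id}$, together with its refinement to correspondences. Fix a projective embedding of $X$ and, by Bertini, a \emph{general} smooth linear section $\iota:H'\hookrightarrow X$ of codimension $d_X-d_B$, so that $\dim H'=d_B$; set $g:=f\circ\iota:H'\r B$. Since $f$ is dominant of relative dimension $d_X-d_B$ and $H'$ has complementary codimension, $g$ is dominant and generically finite, of degree $n$ equal to the number of points of $H'\cap X_{b}$ for general $b$, i.e. $n=\deg_{\mathcal O_X(1)}X_{b}$; as the general fibre $X_{b}$ is a non-empty projective variety of dimension $d_X-d_B$, this integer $n$ is positive, in particular non-zero.

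For the first assertion I would argue directly on zero-cycles. For a closed point $b\in B$, functoriality of the refined Gysin pullback gives $h^{d_X-d_B}f^{*}[b]=\iota_{*}\iota^{!}f^{*}[b]=\iota_{*}g^{*}[b]$, a zero-cycle supported on $g^{-1}(b)=H'\cap X_{b}$; this support is proper, since $g=f\circ\iota$ is proper ($f$ is projective, $\iota$ a closed immersion). Hence $f_{*}h^{d_X-d_B}f^{*}[b]$ is supported at $b$, equal to $\big(\deg g^{*}[b]\big)\cdot[b]$. The projection formula for $g$ (valid because $H'$ and $B$ are smooth) gives $g_{*}g^{*}[b]=[b]\cdot g_{*}[H']=n\cdot[b]$, and since $g_{*}g^{*}[b]$ is supported at $b$ this forces $\deg g^{*}[b]=n$ for \emph{every} closed point $b$. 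By linearity $f_{*}h^{d_X-d_B}f^{*}$ is then multiplication by $n$ on all of $CH_{0}(B)$.

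For the second assertion, assume $B$ projective; then $X$, hence $H'$, is projective as well. Since the class of a hyperplane section is fixed in $CH^{1}(X)$, $h$ is the cycle $\Delta_{X}\cdot p_{1}^{*}c_{1}(\mathcal O_X(1))$, and an easy induction via the composition formula gives $h^{k}=\Delta_{X}\cdot p_{1}^{*}\big(c_{1}(\mathcal O_X(1))^{k}\big)$ for all $k$; representing $c_{1}(\mathcal O_X(1))^{d_X-d_B}$ by $[H']$ yields $h^{d_X-d_B}=(\iota\times\iota)_{*}[\Delta_{H'}]=\Gamma_{\iota}\circ{}^{t}\Gamma_{\iota}$. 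As the composite of graphs is the graph of the composite ($\Gamma_{f}\circ\Gamma_{\iota}=\Gamma_{g}$, hence ${}^{t}\Gamma_{\iota}\circ{}^{t}\Gamma_{f}={}^{t}\Gamma_{g}$), associativity gives
$$\Gamma_{f}\circ h^{d_X-d_B}\circ{}^{t}\Gamma_{f}=(\Gamma_{f}\circ\Gamma_{\iota})\circ({}^{t}\Gamma_{\iota}\circ{}^{t}\Gamma_{f})=\Gamma_{g}\circ{}^{t}\Gamma_{g}.$$
Finally $\Gamma_{g}\circ{}^{t}\Gamma_{g}=n\cdot\Delta_{B}$ by a cycle computation: inside $B\times H'\times B$ the product $p_{12}^{*}\,{}^{t}\Gamma_{g}\cdot p_{23}^{*}\Gamma_{g}$ is set-theoretically supported on $\{(g(y),y,g(y)):y\in H'\}$, which already has the expected dimension $d_B$, so it is this cycle with multiplicity one — computed at a general point of $H'$, where $g$ is \'etale (we work over $\C$) and the two loci meet transversally — and its pushforward by $p_{13}$ is $n\cdot\Delta_{B}$ because $g:H'\r B$ has degree $n$. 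Passing to the induced action on $CH_{0}(B)$ recovers the first assertion in this case.

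The one step requiring care is this last multiplicity count: one must know that no lower-dimensional excess components contribute and that the generic multiplicity is exactly $1$. Both are ensured by the genericity of $H'$ and by the dimension count, which pins the support of the intersection product to the expected dimension; characteristic zero enters only to guarantee that $g$ is generically \'etale.
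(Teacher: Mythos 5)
Your proof is correct, and its core is the same as the paper's: both reduce the statement to the composite $g=f\circ\iota:H'\to B$ for a general smooth linear section $H'$ of codimension $d_X-d_B$ and apply the projection formula to $g_*g^*$, with $n=\deg g=\deg_{\mathcal O_X(1)}X_b>0$. Where you diverge is in the second assertion: the paper disposes of the correspondence identity $\Gamma_f\circ h^{d_X-d_B}\circ{}^t\Gamma_f=n\Delta_B$ in one stroke via Manin's identity principle (the Chow-group statement, applied after base change by any smooth projective $T$, determines the correspondence), whereas you rewrite $h^{d_X-d_B}=\Gamma_\iota\circ{}^t\Gamma_\iota$, collapse the composite to $\Gamma_g\circ{}^t\Gamma_g$, and evaluate this by an explicit proper-intersection computation in $B\times H'\times B$. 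Your route is more self-contained and in fact mirrors the paper's own proof of Lemma \ref{orth}; its price is that you must justify the multiplicity count (properness of the intersection, generic transversality where $g$ is \'etale, characteristic zero), which you do. Two small remarks: the intermediate claim that $g_*g^*[b]=n[b]$ in $CH_0(B)$ ``forces $\deg g^*[b]=n$'' at the cycle level is not needed and is not quite justified when $B$ is not proper (the class $[b]$ could a priori vanish in $CH_0(B)$); the argument should simply stop at the rational-equivalence identity $g_*g^*[b]=n[b]$, which by linearity already gives the first assertion. Also, for the degree count one should note that a general linear space of codimension $d_X-d_B$ does meet every fibre $X_b$ (the fibres are closed in $\mathbb P^N$ since $f$ is projective), so that $g$ is indeed dominant; you implicitly use this.
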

\begin{proof}
  This follows from the projection formula applied to $(f \circ
  \iota)_*(f \circ \iota)^*$ and, when $B$ is projective, from Manin's
  identity principle. See \cite[Example 1 p. 450]{Manin}.
\end{proof}

\begin{lemma} \label{orth} Let $f : X \r B$ be a projective dominant
  morphism between two smooth quasi-projective varieties. Then
  $f_*h^{d_X-d_B}f^* : CH_0(B) \r CH_0(B)$ is the zero map for all $l
  < d_X - d_B$. If moreover $B$ is projective, then $\Gamma_f \circ
  h^l \circ {}^t\Gamma_f = 0 \in CH_{d_X-l}(B \times B)$ for all $l <
  d_X - d_B$.
 \end{lemma}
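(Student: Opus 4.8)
The plan is to follow the proof of Lemma~\ref{dominant} almost verbatim, the single new input being a dimension count. Fix a projective embedding of $X$ and let $\iota : L \hookrightarrow X$ be a general linear section of codimension $l$, smooth by Bertini; choosing the $l$ hyperplanes in general position the successive sections are transverse, so that $h^l = (\iota \times \iota)_*[\Delta_L]$ in $CH_{d_X-l}(X \times X)$ and hence $(h^l)_* = \iota_* \iota^* : CH_*(X) \r CH_*(X)$. Put $g := f \circ \iota : L \r B$, a proper morphism. Since $l < d_X - d_B$ we have $\dim L = d_X - l > d_B$, so that $g_*[L]$ lies in $CH_{d_X-l}(B) = 0$, a variety of dimension $d_B$ carrying no nonzero cycles of dimension exceeding $d_B$.

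For the statement about $CH_0(B)$, the projection formula yields, for each $\beta \in CH_0(B)$,
$$ f_* h^l f^* \beta \;=\; g_* g^* \beta \;=\; g_*\big( g^*\beta \cdot [L] \big) \;=\; \beta \cdot g_*[L] \;=\; 0, $$
and since $CH_0(B)$ is spanned by classes of closed points we conclude $f_* h^l f^* = 0$. For the statement about correspondences, with $B$ projective, I would appeal to Manin's identity principle exactly as in Lemma~\ref{dominant}: the displayed computation goes through word for word over any smooth projective base $T$ — replacing $X, B, f, h, L, g$ by $X \times T$, $B \times T$, $f \times \mathrm{id}_T$, $p_X^*h$, $L \times T$, $g \times \mathrm{id}_T$ — and shows that $\Gamma_f \circ h^l \circ {}^t\Gamma_f$ acts on $CH_*(B \times T)$ as multiplication by $(g \times \mathrm{id}_T)_*[L \times T]$, an element of $CH_{\dim T + d_X - l}(B \times T) = 0$ because $\dim T + d_X - l > \dim T + d_B = \dim (B \times T)$. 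Hence $\Gamma_f \circ h^l \circ {}^t\Gamma_f = 0$ in $CH_{d_X-l}(B \times B)$. (Alternatively one sidesteps the identity principle: using $h^l = (\iota \times \iota)_*[\Delta_L]$ one sees that $\Gamma_f \circ h^l \circ {}^t\Gamma_f$ is represented by a cycle supported on $\{(b,b) : b \in f(L)\} \subseteq \Delta_B$, a closed subvariety of dimension $\leq d_B$; a cycle class of dimension $d_X - l > d_B$ supported there necessarily vanishes.)

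This being a variant of Lemma~\ref{dominant}, I anticipate no serious difficulty. The points to watch are the identification $h^l = (\iota \times \iota)_*[\Delta_L]$ for a smooth codimension-$l$ linear section — i.e. that the successive hyperplane sections may be taken transverse, which Bertini over $\C$ guarantees — and, if one goes through the identity principle, the fact that the projection-formula computation is genuinely uniform in the base $T$; this last point holds precisely because the relevant Chow group vanishes above the dimension of the variety. Beyond this I do not expect any real obstacle.
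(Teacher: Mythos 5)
Your proof is correct, but it takes a different route from the paper's. The paper proves the correspondence identity by a direct computation in $CH_*(B\times X\times X\times B)$: representing $h^l$ by the diagonal of a smooth codimension-$l$ linear section $H^l$, it checks that the three cycles $p_{1,2}^*{}^t\Gamma_f$, $p_{2,3}^*h^l$, $p_{3,4}^*\Gamma_f$ meet properly in $\{(f(h),h,h,f(h)) : h\in H^l\}$, a set of dimension $d_X-l$, and then notes that its image under the proper projection $p_{1,4}$ has dimension at most $d_B<d_X-l$, so the pushforward vanishes; the quasi-projective case is handled by refined intersections. You instead transplant the proof of Lemma~\ref{dominant}: the projection formula reduces everything to the single vanishing $g_*[L]\in CH_{d_X-l}(B)=0$, and Manin's identity principle (with the computation repeated uniformly over $B\times T$) upgrades this to the correspondence identity when $B$ is projective. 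Both arguments rest on the same dimension count $d_X-l>d_B$, and your parenthetical alternative --- that the composite is supported on a subset of $\Delta_B$ of dimension at most $d_B$ --- is essentially the paper's actual argument. What your route buys is a cleaner, adaptation-free treatment of the first assertion for quasi-projective $B$ (proper pushforward and the projection formula over a smooth base need no refinement); what it costs is the extra appeal to Manin, which the paper's direct computation avoids for this lemma. The one identification you rightly flag, namely $h^l=(\iota\times\iota)_*[\Delta_L]$ for a general codimension-$l$ linear section, is also taken for granted in the paper and is justified exactly as you say, by choosing the hyperplanes in general position.
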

 \begin{proof} Let's first assume that $B$ is projective. Let $\iota :
   H^l \hookrightarrow X$ be a smooth linear section of $X$ of
   codimension $l$ that dominates $B$ and let $h^l$ be the class of
   $(\iota \times \iota)( \Delta_{H^l})\in X \times X$.  By definition
   we have $\Gamma_f \circ h^l \circ {}^t\Gamma_f =
   (p_{1,4})_*(p_{1,2}^*{}^t\Gamma_f \cap p_{2,3}^*h^l \cap
   p_{3,4}^*\Gamma_f)$, where $p_{i,j}$ denotes projection from $B
   \times X \times X \times B$ to the $(i,j)$-th factor. These
   projections are flat morphisms, therefore by flat pullback we have
   $p_{1,2}^*{}^t\Gamma_f = [{}^t\Gamma_f \times X \times B]$,
   $p_{2,3}^*h^l = [B \times \Delta_{H^l} \times B]$ and
   $p_{3,4}^*\Gamma_f = [B \times X \times \Gamma_f]$. It is easy to
   see that the closed subschemes ${}^t\Gamma_f \times X \times B$, $B
   \times \Delta_{H^l} \times B$ and $B \times X \times \Gamma_f$ of
   $B \times X \times X \times B$ intersect properly.  Their
   intersection is given by $\{(f(h),h,h,f(h)) : h \in H\} \subset B
   \times X \times X \times B$. Since $f$ is projective, this is a
   closed subset of dimension $d_X-l$ and its image under the
   projection $p_{1,4}$ has dimension $d_B$, which is strictly less
   than $d_X -l$ by the assumption made on $l$. The projection
   $p_{1,4}$ is a proper map and hence by proper pushforward we get
   that $(p_{1,4})_* [\{(f(h),h,h,f(h)) \in B \times X \times X \times
   B : h \in H^l \}] =0$.

   When $B$ is only assumed to be quasi-projective, the arguments
   above can be adapted by using refined intersections as in
   \cite[Remark 16.1]{Fulton} and by noticing that the graph
   $\Gamma_f$ seen as a subscheme of $X \times B$ is proper over $X$
   and over $B$ and that $H \times H$ is proper over $X$ via the two
   projections.
 \end{proof}

 \begin{proposition} \label{inj} Let $f : X \r B$ be a projective
   dominant morphism from a smooth quasi-projective variety $X$ of
   dimension $d_X$ to a smooth quasi-projective variety $B$ of
   dimension $d_B$.  Then the map
  $$(*) \ \ \ \ \ \bigoplus_{i=0}^{d_X-d_B} h^{d_X -d_B -i} \circ f^* \ : \
  \bigoplus_{i=0}^{d_X-d_B} CH_{l-i}(B) \longrightarrow CH_l(X)$$ is
  injective.
\end{proposition}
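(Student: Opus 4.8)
The plan is to show injectivity of $(*)$ by exhibiting an explicit left inverse built out of the correspondences appearing in Lemmas \ref{dominant} and \ref{orth}. The natural candidate is the map $CH_l(X) \to \bigoplus_{j=0}^{d_X-d_B} CH_{l-j}(B)$ whose $j$-th component is $\frac{1}{n} f_* h^{j}$, where $n$ is the nonzero integer from Lemma \ref{dominant}. So the first step is to compute the composite $f_* h^{j} \circ h^{d_X-d_B-i} \circ f^* = f_* h^{d_X-d_B-i+j} f^*$ for all $0 \le i,j \le d_X-d_B$, viewed as a map $CH_{l-i}(B) \to CH_{l-j}(B)$.

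The key computation is the following. When $j = i$, the exponent is $d_X - d_B$, and Lemma \ref{dominant} gives $f_* h^{d_X-d_B} f^* = n \cdot \mathrm{id}$ on $CH_0(B)$; I would need to observe that the same identity holds on $CH_{l-i}(B)$, not merely on $CH_0(B)$ — this follows either from Manin's identity principle applied to $\Gamma_f \circ h^{d_X-d_B} \circ {}^t\Gamma_f = n\cdot \Delta_B$ in the projective case, or, in the quasi-projective case, by a direct refined-intersection argument exactly parallel to the one in Lemma \ref{orth} (the correspondence is supported on the diagonal with multiplicity $n$). When $j < i$, the exponent $d_X - d_B - i + j$ is strictly less than $d_X - d_B$, so Lemma \ref{orth} gives $f_* h^{d_X-d_B-i+j} f^* = 0$. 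Hence the matrix of composites $(f_* h^j \circ h^{d_X-d_B-i}\circ f^*)_{i,j}$ is lower-triangular (in the index ordering where the diagonal is $j=i$) with each diagonal entry equal to $n \cdot \mathrm{id}$, and all entries below the diagonal — i.e. with $j<i$ — equal to zero. A lower-triangular matrix whose diagonal blocks are the invertible map $n\cdot\mathrm{id}$ is injective, so post-composing $(*)$ with $\frac{1}{n}\bigl(f_* h^j\bigr)_{j}$ and reading off components successively (starting from $i=0$) recovers each summand; therefore $(*)$ is injective.

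The main obstacle is the promotion of Lemma \ref{dominant} from $CH_0(B)$ to $CH_{l-i}(B)$ in the quasi-projective setting, where Manin's identity principle is not directly available. I expect to handle this by the same device already used at the end of the proof of Lemma \ref{orth}: replace $X$ by a smooth linear section dominating $B$ of the appropriate codimension, work with the refined intersection product on $B \times X \times X \times B$, identify the relevant intersection locus as $\{(f(h),h,h,f(h))\}$, and check that its image under $p_{1,4}$ is the diagonal $\Delta_B$ with multiplicity $n$ (the generic degree of the finite part of the section over $B$), so that $\Gamma_f \circ h^{d_X-d_B}\circ {}^t\Gamma_f = n\cdot \Delta_B$ holds in $CH_{d_B}(B\times B)$ even when $B$ is only quasi-projective; then act on $CH_{l-i}(B)$ by this class. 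The triangularity argument itself is then routine linear algebra over the Chow groups of $B$.
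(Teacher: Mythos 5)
Your proof is correct and is essentially the argument the paper gives: compose $(*)$ with $\bigoplus_{j} f_*\circ h^{j}$ and observe, via Lemmas \ref{dominant} and \ref{orth}, that the resulting matrix of maps $CH_{l-i}(B)\to CH_{l-j}(B)$ is triangular (the entries with $j<i$ vanish) with diagonal entries given by multiplication by $n\neq 0$. The point you flag --- that Lemma \ref{dominant} must be applied to $CH_{l-i}(B)$ and not merely to $CH_0(B)$, including in the quasi-projective case --- is legitimate but needs no new refined-intersection work: the projection formula argument in that lemma's proof, applied to $(f\circ\iota)_*(f\circ\iota)^*$ for $\iota$ a smooth linear section of codimension $d_X-d_B$ dominating $B$, already gives multiplication by $n$ on cycles of every dimension.
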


\begin{proof} Thanks to lemma \ref{dominant} and to lemma \ref{orth},
  we have that $$f_* \circ h^i \circ f^* : CH_l(B) \r CH_{l+d_X-d_B -
    i}(B)$$ is multiplication by a non-zero integer if $i=d_X-d_B$ and
  is zero if $i<d_X - d_B$.

  Consider the map $ \bigoplus_{j=0}^{d_X-d_B} f_* \circ h^j : CH_l(X)
  \rightarrow \bigoplus_{j=0}^{d_X-d_B} CH_{l-j}(B)$.  In order to
  prove the injectivity of $ \bigoplus_{i=0}^{d_X-d_B} h^{d_X -d_B -i}
  \circ f^*$, it suffices to show that the composite
  $$\Big( \bigoplus_{j=0}^{d_X-d_B} f_* \circ h^j\Big) \ \circ \
  \Big(\bigoplus_{i=0}^{d_X-d_B} h^{d_X -d_B -i} \circ f^*\Big) \ : \
  \bigoplus_{i=0}^{d_X-d_B} CH_{l-i}(B) \longrightarrow CH_l(X)
  \longrightarrow \bigoplus_{j=0}^{d_X-d_B} CH_{l-j}(B)$$ is an
  isomorphism. Indeed it follows from lemma \ref{dominant} and from
  lemma \ref{orth} that this composite map can be represented by an
  upper triangular matrix whose diagonal entries' action on
  $CH_{l-i}(B)$ is given by multiplication by $n$ for some $n \neq 0$.
\end{proof}

\begin{proposition} Let $f : X \r B$ be a flat dominant morphism from
  a quasi-projective variety $X$ of dimension $d_X$ to a
  quasi-projective variety $B$ of dimension $d_B$.  Let $l \geq 0$ be
  an integer.  Assume that $$CH_{l-i}(X_{\eta_{B_i}})= \Q$$ for all $0
  \leq i \leq \min (l,d_B)$ and for all closed irreducible subschemes
  $B_i$ of $B$ of dimension $i$, where $\eta_{B_i}$ is the generic
  point of $B_i$.

  Then the map
  $$(*) \ \ \ \ \ \bigoplus_{i=0}^{d_X-d_B} h^{d_X -d_B -i} \circ f^* \ : \
  \bigoplus_{i=0}^{d_X-d_B} CH_{l-i}(B) \longrightarrow CH_l(X)$$ is
  surjective.
\end{proposition}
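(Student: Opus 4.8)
The plan is to argue by Noetherian induction on closed subsets of $B$, combined with a dévissage over strata where the fibre structure is well-behaved. Set $r = d_X - d_B$ for brevity. We want to show that any class $\alpha \in CH_l(X)$ lies in the image of $(*)$, i.e. that $\alpha = \sum_{i=0}^{r} h^{r-i} f^* \beta_i$ for suitable $\beta_i \in CH_{l-i}(B)$. First I would reduce to the case where $\alpha = [Z]$ is the class of an irreducible $l$-dimensional subvariety of $X$. Let $W = \overline{f(Z)}$, an irreducible closed subvariety of $B$ of some dimension $e$, and let $\eta = \eta_W$ be its generic point. There are two cases. If $e < l - r$ (equivalently $\dim Z = l > e + r \geq \dim f^{-1}(\eta) \cap Z$ forces... actually the relevant dichotomy is whether $Z$ dominates $W$ with fibre dimension $r$ or not); more cleanly, consider the generic fibre $Z_\eta$ of $Z \to W$. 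If $\dim W = e$ then $\dim Z_\eta = l - e$.

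The key input is the hypothesis $CH_{l-i}(X_{\eta_{B_i}}) = \Q$ for $0 \le i \le \min(l,d_B)$: applied with $B_i = W$ and $i = e$ (assuming $e \le \min(l,d_B)$, which holds since $e = \dim f(Z) \le \dim Z = l$ and $e \le d_B$), it tells us that $CH_{l-e}(X_\eta) = \Q$, generated by $h^{r - (l-e)}$ applied to the class of the generic fibre — provided $l - e \le $ something ensuring $X_\eta$ has a class there; more precisely, by Lemma \ref{dominant} applied to $f^{-1}(W) \to W$ (after shrinking and desingularising, or working with the function field), the group $CH_{l-e}(X_\eta)=\Q$ is generated by the restriction of $h^{r-(l-e)}f^*$ of the fundamental class of $W$. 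Hence, after restricting to the generic point, $[Z_\eta] = q\cdot (h^{(d_X - e) - (l-e)} f^*[\text{pt}_\eta])$ for some $q \in \Q$ — i.e., $[Z_\eta] - q\, h^{r-(l-e)}(\text{generic fibre class})= 0$ in $CH_{l-e}(X_\eta)$. By the localisation sequence $CH_{l-e}(X_{f^{-1}(W')}) \to CH_{l-e}(X_{f^{-1}(W)}) \to CH_{l-e}(X_\eta) \to 0$ for $W' \subsetneq W$ closed, the cycle $[Z] - q\, h^{r-(l-e)} f^*[W]$ (which lives on $f^{-1}(W)$, of dimension $\le l$) is supported over a proper closed subset $W' \subsetneq W$.

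Now I would run the Noetherian induction: the "error term" $[Z] - q\,h^{r - (l-e)}f^*[W]$ is a class in $CH_l(X)$ supported on $f^{-1}(W')$ with $\dim W' < \dim W$, and $q\,h^{r-(l-e)}f^*[W]$ is manifestly in the image of $(*)$ since $[W] \in CH_e(B) = CH_{l - (l-e)}(B)$ and $r - (l-e) = d_X - d_B - (l - e)$, so the exponent is exactly $d_X - d_B - (l-e)$ as required (taking $i = l - e$ in the indexing of $(*)$, valid since $0 \le l - e \le r$ once we check $l - e \le r$, which must be handled: if $l - e > r$ then $Z$ cannot dominate $W$ with the expected fibre dimension, and one shows instead that $Z$ is contained in a proper preimage, pushing us again into the inductive situation with a smaller base). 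Descending on $\dim W$, after finitely many steps we reach $\dim W = -1$, i.e. the empty set, and the cycle vanishes. Assembling the finitely many contributions $q\, h^{r-(l-e)} f^*[W]$ expresses $[Z]$ as an element of the image of $(*)$.

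The main obstacle, I expect, is the passage from the generic-point statement $CH_{l-e}(X_\eta) = \Q$ back to an honest cycle-theoretic identity on $f^{-1}(W)$ valid after localisation. This requires: (i) knowing that the generator of $CH_{l-e}(X_\eta)=\Q$ is really the restriction of a global class of the form $h^{\bullet} f^*(\text{something})$ — which is where Lemma \ref{dominant}/Lemma \ref{orth}, or rather Proposition \ref{inj}'s machinery, must be invoked on the restricted family $f^{-1}(W)\to W$, after a resolution of singularities of $W$ and of $f^{-1}(W)$ that one pulls back along; and (ii) controlling the bookkeeping of the hyperplane-section powers so that the exponent $r - (l-e)$ lands in the admissible range $[0, r]$ and matches the indexing in $(*)$ — equivalently checking $0 \le l - e \le d_X - d_B$, the lower bound being automatic and the upper bound corresponding to the fibre $X_\eta$ actually having nonzero cycles in degree $l-e$, which is exactly guaranteed by $CH_{l-e}(X_\eta) = \Q \neq 0$. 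Once these are in place, the Noetherian induction is routine.
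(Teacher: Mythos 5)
Your proposal is correct and is essentially the paper's own argument in a different packaging: both rest on the localization sequence relating $CH_l$ over a closed irreducible $W\subseteq B$ to $CH_{l-\dim W}(X_{\eta_W})$, use the hypothesis $CH_{l-\dim W}(X_{\eta_W})=\Q$ to peel off a multiple of $h^{d_X-d_B-(l-\dim W)}\circ f^*[W]$, and then descend over the base (the paper phrases this as an induction on $d_B$ applied to the restricted families $X_D\to D$ over irreducible divisors, which is your Noetherian induction on $\dim\overline{f(Z)}$ in disguise). The one step you flag but leave vague --- that the generator of $CH_{l-\dim W}(X_{\eta_W})$ really is the restriction of a class of the form $h^{\bullet}\circ f^*[W]$ --- is settled in the paper not via Lemma \ref{dominant} but by a Bertini argument: a general linear section $Y$ of dimension $l$ dominating the base represents $h^{d_X-l}\circ f^*[B]$ and restricts to an irreducible, hence nonzero, class in the generic fibre.
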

\begin{proof}
  The case when $d_B = 0$ is obvious. Let's proceed by induction on
  $d_B$. We have the localization exact sequence $$\bigoplus_{D \in
    B^{1}} CH_l(X_D) \longrightarrow CH_l(X) \longrightarrow
  CH_{l-d_B}(X_{\eta_B}) \longrightarrow 0,$$ where the direct sum is
  taken over all irreducible divisors of $B$. If $l \geq d_B$, let $Y$
  be a closed subscheme of $X$ obtained as the scheme-theoretic
  intersection of $d_X - l$ hyperplanes in general position. Then, by
  Bertini, for a suitable choice of hyperplanes, $Y$ is irreducible,
  has dimension $l$ and is such that $f|_Y : Y \r B$ is dominant. The
  restriction map $CH_l(X) \rightarrow CH_{l-d_B}(X_{\eta_B})$ is by
  definition the direct limit of the flat pullback maps $CH_l(X) \r
  CH_l(X_U)$ taken over all open subsets $U$ of $B$. Therefore
  $CH_l(X) \rightarrow CH_{l-d_B}(X_{\eta_B})$ sends the class of $Y$
  to the class of $Y_{\eta_B}$ inside $CH_{l-d_B}(X_{\eta_B})$. But
  then this class is non-zero because $Y_{\eta_B}$ is irreducible.
  Moreover, if $[B]$ denotes the class of $B$ in $CH_{d_B}(B)$, then
  the class of $Y$ is equal to $h^{d_X- l } \circ f^* [B]$ in
  $CH_l(X)$.  Therefore, the composite map
  $$CH_{d_B}(B) \stackrel{h^{d_X-l} \circ f^*}{\longrightarrow} CH_l(X)
  \r CH_{l-d_B}(X_{\eta_B})$$ is surjective.

  Consider now the fibre square \begin{center} $ \xymatrix{ X_D
      \ar[d]_{f_D} \ar[r]^{j_D'} & X \ar[d]^{f} \\ D \ar[r]^{j_D} &
      B.}$
  \end{center} Then $f_D : X_D \r D$ is flat and its fibres above
  points of $D$ satisfy the assumptions of the theorem. Therefore, by
  the inductive assumption, we have a surjective map
  $$\bigoplus_{i=0}^{d_X-d_B} h^{d_X -d_B -i} \circ f_D^* \ : \
  \bigoplus_{i=0}^{d_X-d_B} CH_{l-i}(D) \longrightarrow CH_l(X_D).$$
  Furthermore, since $f$ is flat and $j_D$ is proper, we have the
  formula \cite[1.7]{Fulton} $$j_{D*}' \circ h^{d_X -d_B -i} \circ
  f_D^* = h^{d_X -d_B -i} \circ f^* \circ j_{D*} \ : \ CH_{l-i}(D) \r
  CH_l(X).$$ Therefore, the image of $(*)$ contains the image of
  $$\bigoplus_{D \in B^1} \bigoplus_{i=0}^{d_X-d_B} j'_{D*} \circ
  h^{d_X -d_B -i} \circ f_D^* \ : \ \bigoplus_{i=0}^{d_X-d_B}
  CH_{l-i}(D) \longrightarrow CH_l(X).$$ Altogether, this implies that
  the map $(*)$ is surjective.
\end{proof}

We can now gather the statements and proofs of the two previous
propositions into the following.

\begin{theorem} \label{main} Let $f : X \r B$ be a flat projective
  dominant morphism from a smooth quasi-projective variety $X$ of
  dimension $d_X$ to a smooth quasi-projective variety $B$ of
  dimension $d_B$.  Let $l \geq 0$ be an integer.  Assume that
  $$CH_{l-i}(X_{\eta_{B_i}})= \Q$$ for all $0 \leq i \leq \min
  (l,d_B)$ and for all closed irreducible subschemes $B_i$ of $B$ of
  dimension $i$, where $\eta_{B_i}$ is the generic point of $B_i$.

  Then the map
  $$(*) \ \ \ \ \ \bigoplus_{i=0}^{d_X-d_B} h^{d_X -d_B -i} \circ f^* \ : \
  \bigoplus_{i=0}^{d_X-d_B} CH_{l-i}(B) \longrightarrow CH_l(X)$$ is
  an isomorphism. Moreover the map $$ \bigoplus_{i=0}^{d_X-d_B} f_*
  \circ h^i \ : \ CH_l(X) \longrightarrow \bigoplus_{i=0}^{d_X-d_B}
  CH_{l-i}(B)$$ is also an isomorphism. \qed
\end{theorem}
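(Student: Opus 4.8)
The plan is to assemble Theorem \ref{main} out of the two propositions just established, since the hypotheses here --- $f$ flat, projective and dominant, with $CH_{l-i}(X_{\eta_{B_i}}) = \Q$ for all $0 \le i \le \min(l, d_B)$ --- subsume those of both. First I would observe that Proposition \ref{inj} uses only that $f$ is projective and dominant (no condition on the fibres is needed for injectivity), so the map $(*)$ is injective. Next, the immediately preceding proposition shows that under the flatness and generic-fibre hypotheses the very same map $(*)$ is surjective. Combining the two yields that $(*)$ is an isomorphism, which is the first assertion.

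For the second assertion I would reuse the computation already carried out inside the proof of Proposition \ref{inj}: by Lemmas \ref{dominant} and \ref{orth}, the composite
$$ M \ := \ \Big(\bigoplus_{j=0}^{d_X-d_B} f_* \circ h^j\Big) \circ \Big(\bigoplus_{i=0}^{d_X-d_B} h^{d_X-d_B-i} \circ f^*\Big) \ : \ \bigoplus_{i=0}^{d_X-d_B} CH_{l-i}(B) \longrightarrow \bigoplus_{j=0}^{d_X-d_B} CH_{l-j}(B) $$
is upper triangular with every diagonal entry equal to multiplication by a fixed non-zero integer $n$, hence invertible. Since $(*)$ has just been shown to be an isomorphism, it follows formally that $\bigoplus_j f_* \circ h^j = M \circ (*)^{-1}$ is an isomorphism as well.

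The one genuinely delicate point --- already handled in the two propositions, but worth re-emphasising --- is that injectivity and surjectivity rest on quite different mechanisms: injectivity comes from the orthogonality relations $\Gamma_f \circ h^l \circ {}^t\Gamma_f = 0$ for $l < d_X - d_B$ together with the non-degeneracy at $l = d_X - d_B$ (Lemmas \ref{orth} and \ref{dominant}), whereas surjectivity is a dévissage on $\dim B$ using the localization sequence for $CH_l(X) \to CH_{l-d_B}(X_{\eta_B})$ and a Bertini argument producing an irreducible multisection of dimension $l$ dominating $B$. The hypothesis on $CH_{l-i}(X_{\eta_{B_i}})$, stated at the generic point of every $i$-dimensional subscheme rather than only at closed points, is precisely what makes that induction close up: restricting $f$ to a divisor $D \subset B$, one needs the fibres of $f_D : X_D \to D$ over points of $D$ --- which are generic points of subschemes of $B$ --- to satisfy the analogous Chow-group condition. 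Once this bookkeeping is in place, nothing further is required: the theorem is the conjunction of the two propositions plus the elementary matrix argument above, so the proof reduces to citing them.
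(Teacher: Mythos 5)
Your proposal is correct and follows exactly the paper's intent: Theorem \ref{main} is stated with a \qed precisely because it is the conjunction of Proposition \ref{inj} (injectivity, needing only that $f$ is projective and dominant) and the preceding surjectivity proposition, and your derivation of the second isomorphism from the upper-triangular composite $M = \Psi \circ \Phi$ with invertible diagonal is the intended (and correct) way to fill in the one detail the paper leaves implicit.
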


\begin{proposition} \label{Chowgroups} Let $f : X \r B$ be a morphism
  of complex varieties with $B$ irreducible and let $F$ be the
  geometric generic fibre of $f$. Then there is a subset $U \subseteq
  B(\C)$ which is a countable intersection of nonempty Zariski open
  subsets such that for each point $p \in U$, $CH_i(X_p)$ is
  isomorphic to $CH_i(F)$ for all $i$.
\end{proposition}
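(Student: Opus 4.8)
The plan is to spread the family out over a subfield of $\C$ that is finitely generated over $\Q$, and then to exploit the fact that, although $\C$ and $\overline{\C(B)}$ are not isomorphic as $\C$-algebras, they \emph{do} become isomorphic once one only demands compatibility with a small enough field.

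First I would replace $B$ by $B_{\mathrm{red}}$ (this changes neither its generic point nor any Chow group) and then, by a standard spreading-out argument, choose a subfield $k\subset\C$ finitely generated over $\Q$ together with models $f_k:X_k\r B_k$ over $k$, with $B_k$ integral, such that $f$, $X$, $B$ are obtained from $f_k$, $X_k$, $B_k$ by extension of scalars to $\C$. Write $F_0:=k(B_k)$ for the function field of the model (finitely generated over $\Q$) and $V:=X_k\times_{B_k}\mathrm{Spec}\,F_0$ for the generic fibre of $f_k$, a scheme over $F_0$. Since $X=X_k\times_{B_k}B$ and the composite $\mathrm{Spec}\,\overline{\C(B)}\to B\to B_k$ factors through the generic point $\mathrm{Spec}\,F_0\to B_k$, a routine base-change computation identifies the geometric generic fibre as $F=V\times_{F_0}\overline{\C(B)}$.

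Next I would produce $U$. Because $k$ is countable, $B_k$ has only countably many closed subvarieties; for each proper one $Z\subsetneq B_k$ the extension of scalars $Z\times_k\C$ is a proper Zariski-closed subset of $B$, and I set $U:=B(\C)\setminus\bigcup_{Z\subsetneq B_k}(Z\times_k\C)(\C)$, a countable intersection of nonempty Zariski opens. The point of this choice is that for $p\in U$ the composite $\mathrm{Spec}\,\C\stackrel{p}{\to}B\to B_k$ meets no proper closed subvariety of $B_k$, hence factors through $\mathrm{Spec}\,F_0$; the same base-change computation as above then gives $X_p=V\times_{F_0,\,\phi_p}\C$, where $\phi_p:F_0\hookrightarrow\C$ is the induced embedding. (If $f_k$ is not dominant then $V=\emptyset$ and both $F$ and $X_p$ are empty, so we may harmlessly assume it is dominant.)

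Finally, for any $p\in U$, both $\C$ — regarded as an $F_0$-algebra through $\phi_p$ — and $\overline{\C(B)}$ — through its natural inclusion of $F_0$ — are algebraically closed of characteristic $0$, and their transcendence degrees over the finitely generated field $F_0$ are both infinite and in fact equal to that of $\C$ over $\Q$; hence they are isomorphic as $F_0$-algebras. Base-changing $V$ along such an isomorphism $\C\stackrel{\sim}{\to}\overline{\C(B)}$ produces an isomorphism of schemes $X_p\stackrel{\sim}{\to}F$, and therefore isomorphisms $CH_i(X_p)\stackrel{\sim}{\to}CH_i(F)$ for all $i$, since an isomorphism of schemes induces isomorphisms on Chow groups (the latter being defined purely scheme-theoretically). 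The one genuinely delicate ingredient is precisely this comparison of $\C$ with $\overline{\C(B)}$: it is available only over a field finitely generated over $\Q$, so one must both descend $f$ to a model $f_k$ over such a $k$ and arrange that $p$ specialises to the generic point of $B_k$ — and it is this last requirement that forces $U$ to be a countable, rather than a single, dense Zariski open.
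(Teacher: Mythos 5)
Your argument is correct and is the standard one: spread the family out over a countable field $k$, remove the countably many proper closed subsets defined over $k$ so that every $p\in U$ induces an embedding $k(B_k)\hookrightarrow\C$ through the generic point, and then compare $\C$ and $\overline{\C(B)}$ as abstract algebraically closed extensions of $k(B_k)$ of equal (continuum) transcendence degree. The paper gives no proof here but simply defers to \cite[Proposition 3.2]{Vial5}, whose argument proceeds along essentially these same lines, so your proposal matches the intended proof.
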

\begin{proof} Cf. \cite[Proposition 3.2]{Vial5}.
\end{proof}

We then have the following corollaries to theorem \ref{main}.

\begin{corollary} \label{corollary-main} Let $f : X \r B$ be a
  projective flat dominant morphism from a smooth quasi-projective
  complex variety $X$ to a smooth quasi-projective complex variety $B$
  of dimension $d_B$.  Let $l \geq 0$ be an integer.  Assume that
  $$CH_{l-i}(X_b)= \Q$$ for all $0 \leq i \leq \min (l,d_B)$ and for
  all closed points $b$ of $B$.

  Then the conclusion of theorem \ref{main} holds.
\end{corollary}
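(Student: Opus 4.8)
The plan is to deduce this from Theorem \ref{main} by upgrading the assumption on closed fibres to the assumption on generic fibres that Theorem \ref{main} requires; beyond a spreading-out argument no new input is needed. So I fix a closed irreducible subscheme $B_i \subseteq B$ of dimension $i$ with $0 \le i \le \min(l,d_B)$, write $\eta_{B_i}$ for its generic point, and aim to prove $CH_{l-i}(X_{\eta_{B_i}}) = \Q$. Granting this for every such $B_i$, the hypothesis of Theorem \ref{main} is met and its conclusion follows.

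First I would pass to the geometric generic fibre. Let $F_i$ denote the geometric generic fibre of the restricted morphism $X \times_B B_i \to B_i$, that is, the base change of $X_{\eta_{B_i}}$ along an algebraic closure $\overline{k(B_i)}$ of the function field $k(B_i)$; since $f$ is flat and projective, $F_i$ is a projective scheme of pure dimension $d_X - d_B$. Applying Proposition \ref{Chowgroups} to $X \times_B B_i \to B_i$ (which applies just as well if $X \times_B B_i$ is reducible or non-reduced, by working componentwise), one obtains a countable intersection $U$ of dense Zariski opens of $B_i(\C)$ such that $CH_*(X_p) \cong CH_*(F_i)$ for every $p \in U$. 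Since $B_i$ is irreducible and $\C$ is uncountable, $U$ is nonempty, and any $p \in U$ is a closed point of $B$; hence $CH_{l-i}(F_i) \cong CH_{l-i}(X_p) = \Q$ by assumption.

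It then remains to descend from $F_i$ back to the generic fibre $X_{\eta_{B_i}}$, and this is the only step that is not purely formal. Writing $\overline{k(B_i)}$ as the filtered union of its finite subextensions $K$ over $k(B_i)$, for each $K$ the flat pull-back $CH_*(X_{\eta_{B_i}}) \to CH_*(X_{\eta_{B_i}} \otimes_{k(B_i)} K)$ is split injective, a one-sided inverse being $\frac{1}{[K : k(B_i)]}$ times the norm (proper push-forward); passing to the colimit shows that $CH_{l-i}(X_{\eta_{B_i}})$ injects into $CH_{l-i}(F_i) = \Q$, so it is $0$ or $\Q$. Finally this group is nonzero: $X_{\eta_{B_i}}$ is a projective scheme over the infinite field $k(B_i)$ of pure dimension $d_X - d_B \ge l-i$ (the inequality being forced by $CH_{l-i}(X_b) = \Q \neq 0$ together with flatness), so the class $h^{d_X - d_B - (l-i)} \cdot [X_{\eta_{B_i}}]$ of a general linear section of dimension $l-i$ has positive degree and does not vanish. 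Hence $CH_{l-i}(X_{\eta_{B_i}}) = \Q$, which is what was needed. The descent is precisely where rational coefficients (for the transfer argument bounding the kernel) and the projectivity of $f$ (to exclude vanishing) are genuinely used; everything else is a direct appeal to Theorem \ref{main} and Proposition \ref{Chowgroups}.
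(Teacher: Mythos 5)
Your proof is correct and follows essentially the same route as the paper: reduce to Theorem \ref{main} by applying Proposition \ref{Chowgroups} to the restriction over each $B_i$ and then descending from the geometric generic fibre via the injectivity of $CH_*(X_{\eta_{B_i}}) \to CH_*(F_i)$ (which the paper simply cites as \cite[Ex.~1.7.6]{Fulton} and you reprove by the norm/transfer argument). The one genuine addition is your degree argument ruling out $CH_{l-i}(X_{\eta_{B_i}})=0$, a point the paper's proof glosses over (harmlessly, since the surjectivity argument in Theorem \ref{main} only needs the group to be generated by the linear-section class), but your treatment is the more careful one.
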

\begin{proof} Let $B_i$ be an irreducible closed subscheme of $B$ of
  dimension $i$ and let $f|_{B_i} : X|_{B_i} \r B_i$ be the
  restriction of $f$ to $B_i$.  If $CH_{l-i}(X_b)= \Q$ for all closed
  points $b \in B$, then proposition \ref{Chowgroups} applied to
  $f|_{B_i}$ implies that $CH_{l-i}(X_{\overline{\eta}_{B_i}})= \Q$.
  Here $\overline{\eta}_{B_i}$ denotes a geometric generic point of
  $B_i$. But then it is well-known \cite[Ex. 1.7.6]{Fulton} that for a
  scheme $X$ over a field $k$, the pull-back map $CH_*(X) \r
  CH_*(X_{\overline{k}})$ is injective. We are thus reduced to the
  statement of theorem \ref{main}.
\end{proof}

\begin{corollary} \label{corollary-quadric} Let $f : X \r B$ be a flat
  dominant morphism from a smooth projective complex variety $X$ to a
  smooth projective complex variety $B$ of dimension $d_B$ whose
  closed fibres are quadric hypersurfaces.  Then the conclusion of
  theorem \ref{main} holds for any $l \leq \lfloor \frac{d_X-d_B-1}{2}
  \rfloor$.
\end{corollary}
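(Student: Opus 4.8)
The plan is to reduce Corollary \ref{corollary-quadric} to Corollary \ref{corollary-main} by verifying the single hypothesis $CH_{l-i}(X_b) = \Q$ for all closed points $b \in B$ and all $0 \leq i \leq \min(l, d_B)$. First I would recall the standard computation of Chow groups of quadric hypersurfaces: if $Q \subset \P^{m+1}$ is a quadric hypersurface of dimension $m$ (possibly singular, since $f$ is only assumed flat, not smooth), then $CH_j(Q) = \Q$ for every $j < m/2$, generated by the class of a linear section of the appropriate dimension. This is classical for smooth quadrics (the motive splits as a sum of Tate motives plus, in even dimension, the motive of the middle variety of lines); for possibly singular quadrics one can either cite the literature on Chow groups of quadrics or argue directly: a singular quadric is a cone over a lower-dimensional quadric, and one handles cones by the standard projective-cone/localization argument, which does not affect the groups $CH_j$ for $j$ below the middle dimension.

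Next I would extract the numerical condition. The closed fibre $X_b$ is a quadric of dimension $d_X - d_B$ (flatness gives equidimensional fibres of this dimension over the smooth, hence catenary, base). So $CH_{l-i}(X_b) = \Q$ is guaranteed as soon as $l - i < \frac{d_X - d_B}{2}$. The most restrictive instance is $i = 0$, giving the requirement $l < \frac{d_X - d_B}{2}$, i.e. $2l \leq d_X - d_B - 1$, i.e. $l \leq \lfloor \frac{d_X - d_B - 1}{2} \rfloor$. This is exactly the bound in the statement, and for such $l$ the inequality $l - i < \frac{d_X-d_B}{2}$ holds a fortiori for all $i \geq 0$, in particular for all $0 \leq i \leq \min(l, d_B)$. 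Hence the hypothesis of Corollary \ref{corollary-main} is satisfied.

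With the hypothesis verified, I would simply invoke Corollary \ref{corollary-main}: its conclusion is precisely the conclusion of Theorem \ref{main}, namely that both $(*)$ and the map $\bigoplus_{i=0}^{d_X-d_B} f_* \circ h^i$ are isomorphisms. This completes the proof.

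The only genuine subtlety — and the step I would expect to need the most care — is that $f$ is assumed flat but not smooth, so some fibres may be singular quadrics, and I must make sure the vanishing $CH_j(X_b) = \Q$ for $j < \frac{\dim X_b}{2}$ still holds in that generality. Everything else is a bookkeeping check on the numerology of the bound $l \leq \lfloor \frac{d_X-d_B-1}{2}\rfloor$ and an appeal to the already-established corollary; in particular no further geometry of the family is needed beyond what Theorem \ref{main} and Proposition \ref{Chowgroups} already supply.
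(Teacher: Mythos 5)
Your proposal is correct and follows essentially the same route as the paper: verify the hypothesis of Corollary \ref{corollary-main} using the fact that a (possibly singular) quadric hypersurface $Q$ satisfies $CH_i(Q)=\Q$ for $i \leq \lfloor \frac{\dim Q - 1}{2}\rfloor$ (the paper simply cites \cite{ELV} for this, where you sketch the cone argument), and then check the numerology to get the bound $l \leq \lfloor \frac{d_X-d_B-1}{2}\rfloor$.
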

\begin{proof} It is well-known (see e.g. \cite{ELV}) that, for a
  quadric hypersurface $Q$, $CH_i(Q)=\Q$ for all $i \leq \lfloor
  \frac{\dim Q -1}{2} \rfloor$. Corollary \ref{corollary-main} thus
  applies.
\end{proof}

\section{Murre's conjectures for total spaces of flat families of
  quadric hypersurfaces over a surface}
\label{quadrics}

The main result of this section is the following.

\begin{theorem} \label{Murrequadrics} Let $f : X \r S$ be a flat
  dominant morphism from a smooth projective complex variety $X$ to a
  smooth projective complex surface $S$ whose closed fibres $X_s$
  satisfy $CH_l(X_s)=\Q$ for all $l \leq \frac{d_X-3}{2}$. Then $X$
  has a self-dual Murre decomposition and $X$ satisfies the motivic
  Lefschetz conjecture.
\end{theorem}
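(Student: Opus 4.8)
The plan is to build the Chow--K\"unneth projectors explicitly from the decomposition of $CH_*(X)$ provided by Theorem~\ref{main} (via Corollary~\ref{corollary-main}), combined with the known Murre decomposition of the surface $S$. First I would record what Theorem~\ref{main} gives here: since $CH_l(X_s)=\Q$ for all $l\leq\frac{d_X-3}{2}$, the hypotheses of Corollary~\ref{corollary-main} hold for all $l\leq\frac{d_X-d_B-1}{2}=\frac{d_X-3}{2}$, so for such $l$ the maps $\bigoplus_{i=0}^{d_X-2}h^{d_X-2-i}\circ f^*$ and $\bigoplus_{i=0}^{d_X-2}f_*\circ h^i$ realise $CH_l(X)\cong\bigoplus_{i=0}^{d_X-2}CH_{l-i}(S)$ through mutually inverse (up to an invertible triangular matrix) correspondences. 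Dually, by applying the theorem to $l':=d_X-l$ and transposing, one controls $CH_l(X)$ for $l\geq\frac{d_X+1}{2}$ as well, using ${}^t\Gamma_f\circ h^j$ and $h^j\circ\Gamma_f$. The idea is that the correspondences $h^{d_X-2-i}\circ f^*\circ p^S_k\circ f_*\circ h^i$ — where $\{p^S_0,\dots,p^S_4\}$ is a self-dual Murre decomposition of $S$ (which exists by \cite{Murre}) — give, after the usual Gram--Schmidt-style orthogonalisation, idempotents $\pi_j$ cutting out exactly the graded pieces $CH_{l-i}(S)$ sitting in cohomological degree $j$. The self-duality requirement $\pi_j={}^t\pi_{2d_X-j}$ is arranged by choosing the $p^S_k$ self-dual and pairing the ``$f^*h^\bullet$'' side with the ``$h^\bullet f_*$'' side symmetrically.

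Next I would treat the middle range, i.e. the motives living in cohomological degrees near $d_X$ that are not detected by the fibrewise-trivial-Chow-group argument. Here the point is that the complement of the image of $(*)$ and its dual inside $\h(X)$ is a motive $M$ with $CH_i(M)=0$ for $i$ outside a narrow band around $d_X/2$ and with $H_i(M)=0$ outside $i\in\{d_X-1,d_X,d_X+1\}$ or so; its motive is governed by the surface $S$ and by the ``horizontal'' part coming from the generic fibre. Concretely, one writes $\h(X)$ as a direct sum of Lefschetz-twisted copies of $\h(S)$ (indexed by $i=0,\dots,d_X-2$, coming from the quadric-bundle part) together with one extra summand $N$ accounting for the primitive cohomology of the quadric fibres — when $d_X-2$ is even this is a Tate-type summand over a double cover of $S$ (the discriminant construction), and its Chow--K\"unneth decomposition again reduces to that of a surface (or of $S$ itself with a rank-one local system), while when $d_X-2$ is odd there is no such extra summand. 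In all cases $N$ decomposes into at most three pieces, concentrated in degrees $d_X-2,d_X-1,d_X$, each reducible to Murre-type data on a smooth projective surface. Assembling: $\pi_j=\sum_i(\text{Lefschetz twist of }p^S_{j-?}) + (\text{piece of }N)$, mutually orthogonal after orthogonalisation.

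Then I would verify the Murre conditions (A), (B), (D) for the resulting $\{\pi_j\}$. Condition (A) is immediate from the cohomological computation above (each summand contributes to a single $H_j$). For (B) and (D), the key is that the action of $\pi_j$ on $CH_l(X)$ is read off from the isomorphism $CH_l(X)\cong\bigoplus_i CH_{l-i}(S)$ (in the stated range, and dually), under which $\pi_j$ acts as the corresponding $p^S_{j-2i}$ on the $i$-th summand; since $S$ is a surface, $p^S_0,p^S_1$ act as $0$ on $CH_0(S)$, $p^S_0$ acts as $0$ on $CH_1(S)$ and $p^S_4$ acts as $0$ on $CH_0(S),CH_1(S)$ — giving (B) — and $F^1CH_l(S)=CH_l(S)_{\hom}$ — giving (D). One has to chase the Lefschetz twists to check that the degree bookkeeping matches what (B) demands for $CH_l(X)$, but this is formal. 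Finally, the motivic Lefschetz conjecture: intersecting $d_X-j$ times with a hyperplane section sends the Lefschetz-twisted copy of $p^S_k$ in degree $j$ to the one in degree $2d_X-j$, and on each such copy the hard Lefschetz isomorphism for $S$ (which holds, $S$ being a surface) plus Lemma~\ref{dominant} (which makes $f_*h^{d_X-d_B}f^*$ invertible) shows the map is an isomorphism; the summand $N$ is handled by the hard Lefschetz theorem for its surface/curve model.

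The main obstacle I expect is the middle-degree summand $N$: identifying precisely the motive carrying the primitive cohomology of the fibres, showing it is a direct summand of $\h(X)$ \emph{with $\Q$-coefficients} (so that the discriminant double cover and the associated idempotent are available without worrying about $2$-torsion), and checking its Chow groups vanish in the relevant range so that the projectors cutting it out are both orthogonal to the rest and satisfy (B)--(D). This requires care with the parity of $d_X$ and with the geometry of the degeneration locus of $f$ (which is where flatness, rather than smoothness, of $f$ makes the argument delicate — one cannot invoke a relative Chow--K\"unneth decomposition as in \cite{GHM}), and it is presumably where Theorem~\ref{main} in its full strength, rather than just the niveau bound from \cite{Vial5}, is needed: it is the \emph{isomorphism} $CH_l(X)\cong\bigoplus CH_{l-i}(S)$, realised by explicit correspondences, that pins down the action of the candidate projectors on cycles and forces orthogonality.
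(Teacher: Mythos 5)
There is a genuine gap, and it sits exactly where you flag your ``main obstacle'': the middle-degree part of the motive. Your proposed resolution --- identifying a summand $N$ carrying the primitive cohomology of the fibres via the discriminant double cover of $S$ --- cannot work in the generality of the statement, because the theorem does not assume the fibres are quadrics: it only assumes $CH_l(X_s)=\Q$ for $l\leq\frac{d_X-3}{2}$ (the corollary also covers fibres that are $(2,2)$ complete intersections of dimension $4$, for which no discriminant construction is available), and even for genuine quadric fibrations the double-cover approach needs $f$ to come from a quadratic form with controlled degeneration, which flatness alone does not give. Moreover your first step already runs into the difficulty the paper points out explicitly in its discussion of smooth families: transporting the Murre decomposition of $S$ by $h^{d_X-2-i}\circ f^*\circ p^S_k\circ f_*\circ h^i$ yields semi-orthogonal idempotents $p_l$ with the right motives, but one cannot show directly that $CH_*(X)=CH_*(X,\sum_l p_l)$, since the isomorphism $CH_l(X)\cong\bigoplus_i CH_{l-i}(S)$ is only available for $l\leq\lfloor\frac{d_X-3}{2}\rfloor$; and your claim that one controls $CH_l(X)$ for $l\geq\frac{d_X+1}{2}$ ``dually by transposing'' is false --- Chow groups satisfy no such duality, which is precisely why conjecture (B) is nontrivial here.

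The mechanism the paper actually uses, and which is absent from your proposal, is cycle-theoretic and inductive: only the transcendental projector $\pi_2^{tr,S}$ is transported from $S$; the complementary motive $Q$ is shown to have representable $CH_l(X,Q)_\alg$ for $l\leq\lfloor\frac{d_X-3}{2}\rfloor$ using the projective-bundle-type formula; the unit- and $\h_1$-type idempotents $p_{2l}^{alg}$, $p_{2l+1}$ are constructed by lifting idempotents from numerical motives via Jannsen's semisimplicity; a Bloch--Srinivas decomposition argument (lemma \ref{curve}) shows the leftover motive $P$ has $CH_0(X,P)=0$; and Kahn's lemma \ref{induction} (namely $CH_0(X,P)=0\Rightarrow(X,P)\simeq(Y,P',1)$) is applied repeatedly to climb through the Chow groups and finally to identify the middle piece $(X,\Pi_{d_X})$ as a direct summand of the motive of a curve or of a surface. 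It is this last structural fact about \emph{every} projector --- each factors through a curve or a surface --- that makes conjectures (B) and (D) checkable on $CH_l(X)$ for all $l$, including the large $l$ your argument does not reach. Without this inductive peeling-off, neither the completeness of your projectors nor their action on Chow groups outside the low range can be established.
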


\begin{corollary}
  Let $f : X \r S$ be a flat dominant morphism from a smooth
  projective complex variety $X$ to a smooth projective complex
  surface $S$ whose closed fibres are either quadric hypersurfaces or
  complete intersections of dimension $4$ and bidegree $(2,2)$. Then
  $X$ has a self-dual Murre decomposition and $X$ satisfies the
  motivic Lefschetz conjecture.
\end{corollary}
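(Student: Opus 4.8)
The plan is to deduce the corollary from Theorem~\ref{Murrequadrics}. Since $f$ is flat and dominant onto a surface, each closed fibre $X_s$ has pure dimension $d_X-2$, so it suffices to verify the hypothesis of that theorem, namely that $CH_l(X_s)=\Q$ for every integer $l\le\frac{d_X-3}{2}$.

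If the fibres are quadric hypersurfaces, then $\dim X_s=d_X-2$, and the computation of the Chow groups of a quadric recalled in the proof of Corollary~\ref{corollary-quadric} (see \cite{ELV}) gives $CH_i(X_s)=\Q$ for all $i\le\lfloor\frac{\dim X_s-1}{2}\rfloor=\lfloor\frac{d_X-3}{2}\rfloor$, which is exactly the required range; Theorem~\ref{Murrequadrics} then applies, recovering the corollary stated in the introduction. If instead the fibres are complete intersections of dimension $4$ and bidegree $(2,2)$, then $d_X=6$ and $X_s$ is a complete intersection $Y$ of two quadrics in $\mathbf{P}^6$, so the hypothesis reduces to $CH_0(Y)=CH_1(Y)=\Q$. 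Here $CH_0(Y)=\Q$ is immediate, since $\omega_Y\cong\mathcal{O}_Y(-3)$ makes $Y$ a Fano variety, hence rationally connected.

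The remaining point, $CH_1(Y)=\Q$, is where the actual work lies, and I expect it to be the only genuine obstacle; everything else is either a citation or a formal application of Theorem~\ref{Murrequadrics}. Being a smooth complete intersection of dimension $4\ne 3$, $Y$ has no odd cohomology, so $H_3(Y)=H_5(Y)=0$, while $H_2(Y)=\Q$; hence $CH_1(Y)$ surjects onto a one-dimensional group spanned by the class of a line, and it suffices to prove $CH_1(Y)_\hom=0$. I would obtain this from $CH_0(Y)=\Q$ together with the vanishing of the odd cohomology: the Bloch-Srinivas decomposition of the diagonal associated to $CH_0(Y)=\Q$ forces any class in $CH_1(Y)_\hom$ to be detected by an Abel-Jacobi-type invariant valued in a subquotient of the odd cohomology of $Y$, which vanishes; equivalently, one refines the decomposition of the diagonal using $H_3(Y)=H_5(Y)=0$ until $CH_1(Y)$ is supported in dimension $1$, whence $CH_1(Y)=\Q$ because $H_2(Y)=\Q$. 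The same conclusion follows transparently from the classical geometry of complete intersections of two quadrics: via the pencil of quadrics through $Y$ and the Cayley trick, the blow-up of $\mathbf{P}^6$ along $Y$ is the total space of a quadric bundle over $\mathbf{P}^1$, and matching the blow-up decomposition of its motive with the motivic decomposition of a quadric bundle over a curve exhibits $\h(Y)$ as a sum of Tate motives and twists of the motive $\h^1(C)$ of the discriminant curve $C$; since $H_4(Y)$ is a Hodge structure of Tate type this last summand does not occur, so $\h(Y)$ is a sum of Tate motives and in particular $CH_1(Y)=\Q$. One could also simply cite the known computations of Chow groups of low-dimensional cycles on complete intersections of small degree. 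With $CH_0(Y)=CH_1(Y)=\Q$ established, Theorem~\ref{Murrequadrics} furnishes the self-dual Murre decomposition and the motivic Lefschetz conjecture for $X$.
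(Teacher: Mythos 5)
Your overall route is the paper's: by flatness the fibres have pure dimension $d_X-2$, and the corollary reduces to checking the hypothesis $CH_l(X_s)=\Q$ for $l\le\frac{d_X-3}{2}$ of Theorem \ref{Murrequadrics}. For quadric fibres you, like the paper, invoke \cite{ELV}; for the $(2,2)$ fourfold the paper again just cites \cite{ELV} for $CH_0=CH_1=\Q$, which is the third of your three options, so the proposal is correct as a whole. Where you genuinely diverge is in trying to reprove $CH_1(Y)=\Q$ for $Y=X_{2,2}^4$. Your Cayley-trick argument is the nicest of the alternatives and is essentially self-contained given Section \ref{hypsection}: $\mathrm{Bl}_Y\P^6$ is the universal quadric of the pencil, a flat quadric fibration over $\P^1$ with fibres of odd dimension $5$, so Corollary \ref{corollary-quadric} computes $CH_1$ and $CH_2$ of the blow-up as $\bigoplus_i CH_{*-i}(\P^1)$, and comparing with the blow-up formula $CH_l(\mathrm{Bl}_Y\P^6)=CH_l(\P^6)\oplus CH_{l-1}(Y)$ gives $CH_0(Y)=CH_1(Y)=\Q$ directly --- you do not need the full motivic decomposition, the discriminant, or the (unproved) claim that $H_4(Y)$ is of Tate type. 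By contrast, your first argument has a real gap: a single Bloch--Srinivas decomposition of the diagonal coming from $CH_0(Y)=\Q$ controls \emph{codimension-two} cycles (it injects $CH^2(Y)_{\alg}$ into $J^2(Y)$), but for $1$-cycles on a fourfold, i.e.\ codimension three, it only exhibits $CH_1(Y)$ as a quotient of $CH_0$ of a threefold, which is far from forcing $CH_1(Y)_{\hom}=0$; the iterated refinement you allude to needs as input the triviality of $CH_1$ of the successive supports, which is essentially what you are trying to prove. So keep either the citation of \cite{ELV} or the Cayley-trick computation, and drop the diagonal-decomposition prong.
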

\begin{proof}
  The Chow groups of a quadric hypersurface $Q$ satisfy $CH_i(Q)=\Q$
  for all $i \leq \frac{\dim Q -1}{2}$ and the Chow groups of a
  complete intersection $X_{2,2}$ of dimension $4$ and bidegree
  $(2,2)$ satisfy $CH_0(X_{2,2}) = CH_1(X_{2,2}) = \Q$. This is for
  example proved in \cite{ELV}.
\end{proof}

Before we proceed to a proof of theorem \ref{Murrequadrics}, we
consider the case when $f : X \r S$ is a smooth quadric fibration.

\subsection{The case of smooth families}

In this subsection we are given $f : X \r B$ a smooth surjective
morphism between smooth projective varieties with fibres being quadric
hypersurfaces. In this case there are several ways to compute the Chow
motive of $X$ in terms of the Chow motive of $B$. Since we are going
to prove a more general statement we only give some indication on
proofs.

\paragraph{Smooth families with a relative Chow-K\"unneth
  decomposition.} Recall that quadric hypersurfaces are cellular
varieties and that smooth quadric hypersurfaces are homogeneous
varieties. Assume first that $X$ has the structure of a relative
cellular variety over $B$. Then K\"ock \cite{Kock} proved that $X$ has
a relative Chow-K\"unneth decomposition over $B$ in the sense of
\cite{GHM}.  If $f$ is only assumed to be smooth, then Iyer
\cite{Iyer2} showed that $f$ is \'etale locally trivial and deduced
that $f$ has a relative Chow-K\"unneth decomposition. By using the
technique of Gordon-Hanamura-Murre \cite{GHM}, it is then possible to
prove that
\begin{center} $\h(X) \simeq \bigoplus_{l=0}^{d_X-d_B} \h(B)(l)$ for
  $d_X-d_B$ odd, and \medskip

  $\h(X) \simeq \bigoplus_{l=0}^{d_X-d_B} \h(B)(l) \oplus
  \h(B)(\frac{d_X-d_B}{2})$ for $d_X-d_B$ even.
\end{center} Actually, in the case when $X$ has the structure of a
relative cellular variety over $B$, this follows immediately from
Manin's identity principle.

When $d_X-d_B$ is odd we develop below an approach that bypasses the
use of the fact that the smooth family $f : X \r B$ is \'etale locally
trivial. This approach is the starting point towards extending the
above result for smooth families to flat families.

\paragraph{Smooth families of odd relative dimension.} If $Q$ is a
smooth projective odd-dimensional quadric, we have that $CH_l(Q) = \Q$
for all $0 \leq l \leq \dim Q$ so that $Q$ has the same Chow groups
(with rational coefficients) as the projective space of dimension
$\dim Q$. Thus when $f$ has odd relative dimension, the situation is
very similar to the case of projective bundles:
Corollary \ref{corollary-main} gives isomorphisms for all $0
\leq l \leq d_X$
$$\bigoplus_{i=0}^{d_X-d_B} h^{d_X -d_B -i} \circ f^* \ : \
\bigoplus_{i=0}^{d_X-d_B} CH_{l-i}(B) \longrightarrow CH_l(X).$$ If $H
\hookrightarrow X$ is a smooth hyperplane section of $X$, then for $Y$
a smooth projective variety $H \times Y \hookrightarrow X \times Y$ is
also a smooth hyperplane section of $X \times Y$. Therefore the
isomorphism above induces a similar isomorphism for the smooth map $f
\times \id_Y : X \times Y \r B \times Y$ and Manin's identity
principle applies to give an isomorphism of Chow motives $$\h(X)
\simeq \bigoplus_{l=0}^{d_X - d_B} \h(B)(l).$$

There is yet another way of proceeding and this will be the path we
will follow to prove the case of flat families over a surface (in
which case the arguments above do not suffice as the Chow groups of
singular quadrics are not all equal to $\Q$). We only
give a sketch and point out where the difficulty is.  Thanks to lemma
\ref{dominant} we can define idempotents $\pi_0, \ldots, \pi_{d_X -
  d_B} \in CH_{d_X}(X \times X)$ as $$\pi_l := \frac{1}{n} \cdot
h^{d_X - d_B -l} \circ {}^t\Gamma_f \circ \Gamma_f \circ h^l.$$
Lemma \ref{dominant} shows that these idempotents satisfy $(X,\pi_l)
\simeq \h(B)(l)$ and lemma \ref{orth} shows that $\pi_l \circ \pi_{l'}
= 0$ for all $l' > l$. Moreover theorem \ref{main} shows that $CH_i(X)
= \sum_l (\pi_l)_*CH_i(X)$ for all $i$.

Now we have the following non-commutative Gram-Schmidt process
\cite[lemma 2.12]{Vial3}.

\begin{lemma} \label{linalg} Let $V$ be a $\Q$-algebra and let $k$ be
  a positive integer. Let $\pi_0, \ldots, \pi_n$ be idempotents in $V$
  such that $\pi_i \circ \pi_j = 0$ whenever $i -j < k$ and $i \neq
  j$. Then the endomorphisms $$p_i := (1-\frac{1}{2}\pi_n) \circ
  \cdots \circ (1-\frac{1}{2}\pi_{i+1}) \circ \pi_i \circ
  (1-\frac{1}{2}\pi_{i-1}) \circ \cdots \circ (1-\frac{1}{2}\pi_0)$$
  define idempotents such that $p_i \circ p_j = 0$ whenever $i -j <
  k+1$ and $i \neq j$.
\end{lemma}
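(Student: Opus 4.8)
The plan is to set $u_a := 1 - \frac12\pi_a$ for each $a$, so that $p_i = u_n\circ\cdots\circ u_{i+1}\circ\pi_i\circ u_{i-1}\circ\cdots\circ u_0$, and to argue by induction on $n$, peeling off $\pi_n$. First I would record the elementary absorption rules flowing from the hypotheses: each $\pi_a$ being idempotent gives $u_a\circ\pi_a = \pi_a\circ u_a = \frac12\pi_a$; and since $k\geq 1$ forces $\pi_a\circ\pi_b=0$ whenever $a<b$ or $|a-b|<k$, one has $u_a\circ\pi_b=\pi_b$ whenever $\pi_a\circ\pi_b=0$ and $\pi_b\circ u_a=\pi_b$ whenever $\pi_b\circ\pi_a=0$. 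In particular $u_a\circ\pi_n=\pi_n$ for every $a<n$, so that $R_n\circ\pi_n=\pi_n$ where $R_n:=u_{n-1}\circ\cdots\circ u_0$, whence $R_n\circ u_n=R_n-\frac12\pi_n$.

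Next, let $p_i'$ ($0\leq i\leq n-1$) denote the output of the same construction applied to the sub-family $\pi_0,\ldots,\pi_{n-1}$, which satisfies the same hypothesis; then $p_i=u_n\circ p_i'$ for $i<n$ and $p_n=\pi_n\circ R_n$. The key local fact is $p_i'\circ\pi_n=0$ for $i<n$: the right-hand tail $u_{i-1}\circ\cdots\circ u_0$ of $p_i'$ absorbs $\pi_n$, leaving $\pi_i\circ\pi_n=0$; hence $p_i'\circ u_n=p_i'$. Together with the inductive hypothesis this yields at once $p_i^2=u_n\circ(p_i')^2=p_i$, and $p_i\circ p_j=u_n\circ p_i'\circ p_j'=0$ for $i\neq j$ with $i-j<k+1$ and $i,j<n$, and $p_i\circ p_n=u_n\circ(p_i'\circ\pi_n)\circ R_n=0$ for $i<n$; idempotency of $p_n$ follows from $R_n\circ\pi_n=\pi_n$.

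The only step requiring a genuine computation — and where the coefficient $\frac12$ is essential — is $p_n\circ p_j=0$ for $j<n$ with $n-j\leq k$. For such $j$ the left-hand tail $u_{n-1}\circ\cdots\circ u_{j+1}$ of $p_j$ absorbs $\pi_j$, so $p_j=u_n\circ\pi_j\circ R_j$ with $R_j:=u_{j-1}\circ\cdots\circ u_0$; hence $p_n\circ p_j=\pi_n\circ R_n\circ u_n\circ\pi_j\circ R_j=\big(\pi_n\circ R_n\circ\pi_j-\frac12\,\pi_n\circ\pi_j\big)\circ R_j$, and it suffices to prove $\pi_n\circ R_n\circ\pi_j=\frac12\,\pi_n\circ\pi_j$. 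Expanding $R_n=(1-\frac12\pi_{n-1})\circ\cdots\circ(1-\frac12\pi_0)$ as a sum over subsets and inserting $\pi_n$ on the left and $\pi_j$ on the right, a monomial $\pi_n\circ\pi_{a_1}\circ\cdots\circ\pi_{a_r}\circ\pi_j$ with $n-1\geq a_1>\cdots>a_r\geq 0$ survives only if $\pi_n\circ\pi_{a_1}\neq 0$, which forces $a_1\leq n-k$, and only if $\pi_{a_r}\circ\pi_j\neq 0$, which — since then $a_r\leq a_1\leq n-k\leq j$ — forces $a_r=j$; thus $r\leq 1$ and only the subsets $\emptyset$ and (when $j=n-k$) $\{j\}$ contribute, with coefficients $1$ and $-\frac12$, giving exactly $\frac12\,\pi_n\circ\pi_j$. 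This last cancellation is the heart of the argument; the remaining verifications are formal, and the base case $n=0$ (where $p_0=\pi_0$) is trivial. I expect the careful bookkeeping of exactly which $u_a$ are absorbed at each stage to be the only delicate point.
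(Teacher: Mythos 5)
Your proof is correct, and it is worth noting that the paper itself does not prove this lemma at all --- it only cites \cite[Lemma 2.12]{Vial3} --- so your argument supplies a self-contained verification. The inductive scheme (peel off $\pi_n$, observe $p_i = u_n\circ p_i'$ and $p_i'\circ\pi_n=0$, so everything except the pairs $p_n\circ p_j$ with $0<n-j\leq k$ is formal) is efficient, and your identification of $\pi_n\circ R_n\circ\pi_j=\tfrac12\,\pi_n\circ\pi_j$ as the one genuine computation is exactly where the coefficient $\tfrac12$ earns its keep. Two small points where the write-up is compressed but the mathematics is fine: (i) in the absorption $u_{n-1}\circ\cdots\circ u_{j+1}\circ\pi_j=\pi_j$ for $n-j\leq k$, the needed vanishings $\pi_a\circ\pi_j=0$ hold because $a-j\leq (n-j)-1\leq k-1<k$, and when $k=1$ the product is empty, so the edge case is vacuous rather than problematic; (ii) in the final cancellation, when $n-k<j<n$ only the empty subset survives and the identity $\pi_n\circ R_n\circ\pi_j=\tfrac12\,\pi_n\circ\pi_j$ holds only because $\pi_n\circ\pi_j=0$ by hypothesis in that range --- you implicitly use this, and it would be worth one explicit sentence. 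With those two remarks spelled out, the argument is complete.
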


\begin{proposition} \label{GS} Let $X$ be a smooth projective variety
  of dimension $d$.  Let $\pi_0, \ldots, \pi_s \in CH_d(X \times X)$
  be idempotents such that $\pi_l \circ \pi_{l'} = 0$ for all $l' >
  l$, then the non-commutative Gram-Schmidt process of lemma
  \ref{linalg} gives mutually orthogonal idempotents $\{p_l\}_{l \in
    \{0,\ldots,s\}}$ such that we have isomorphisms of Chow motives
  $(X,\pi_l) \simeq (X,p_l)$ for all $l$.
\end{proposition}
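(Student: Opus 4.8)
The plan is to run the iterated Gram--Schmidt process of Lemma \ref{linalg}, starting from $k=1$, and to keep track at each stage of two things: that the idempotents produced are obtained from the previous ones by pre- and post-composing with invertible correspondences, and that after finitely many steps (namely $s$ steps) all the vanishing relations $p_l \circ p_{l'}=0$ hold for $l\neq l'$, i.e. the $p_l$ are mutually orthogonal. Concretely, I would set $\pi^{(1)}_l := \pi_l$, which by hypothesis satisfies $\pi^{(1)}_i\circ\pi^{(1)}_j=0$ whenever $i-j<1$ and $i\neq j$ (this is exactly $i<j$); then I would define $\pi^{(m+1)}_l$ from $\pi^{(m)}_l$ by the formula of Lemma \ref{linalg}. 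By that lemma, $\pi^{(m+1)}_i\circ\pi^{(m+1)}_j=0$ whenever $i-j<m+1$ and $i\neq j$, so that after $s$ applications the vanishing holds for all $i-j<s+1$, $i\neq j$; since $0\le i,j\le s$ we have $|i-j|\le s<s+1$, hence $\pi^{(s+1)}_i\circ\pi^{(s+1)}_j=0$ for all $i\neq j$. Setting $p_l := \pi^{(s+1)}_l$ gives the desired mutually orthogonal family of idempotents.

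The second point is the isomorphism of motives $(X,\pi_l)\simeq(X,p_l)$, and this is where one has to be slightly careful. At each step the new projector is of the shape $p'_i = u_i \circ \pi_i \circ v_i$ where $u_i = (1-\tfrac12\pi_n)\circ\cdots\circ(1-\tfrac12\pi_{i+1})$ and $v_i = (1-\tfrac12\pi_{i-1})\circ\cdots\circ(1-\tfrac12\pi_0)$. The key observation is that each factor $1-\tfrac12\pi_j$ is an invertible element of the correspondence algebra $CH_d(X\times X)$: indeed, since $\pi_j$ is idempotent, $(1-\tfrac12\pi_j)(1+\pi_j) = 1 + \tfrac12\pi_j - \tfrac12\pi_j^2 = 1 + \tfrac12\pi_j - \tfrac12\pi_j=1$ wait --- more cleanly, $(1-\tfrac12\pi_j)$ has inverse $1-\bigl(\tfrac{1/2}{1-1/2}\bigr)^{-1}$-style geometric correction; explicitly one checks $(1-\tfrac12\pi_j)(1 + (\tfrac{1}{2}/(1-\tfrac12))\pi_j) = 1$, i.e. $(1-\tfrac12\pi_j)(1+\pi_j)\ne 1$ but rather one solves $1-\tfrac12\pi_j)(1+c\pi_j)=1$ giving $c=1$ after using $\pi_j^2=\pi_j$: $1+c\pi_j-\tfrac12\pi_j-\tfrac{c}{2}\pi_j = 1$ forces $c-\tfrac12-\tfrac c2=0$, i.e. $c=1$. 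So $(1-\tfrac12\pi_j)^{-1}=1+\pi_j$. Hence $u_i$ and $v_i$ are invertible, and therefore so is, at each stage, the passage from the old projector to the new one up to conjugation-like data. To extract the motivic isomorphism I would invoke the standard lemma (cf. the discussion around idempotent completion, e.g. as used in \cite{Vial3,VialCK}) that if $p = a\circ b$ and $q = b\circ a$ are idempotents with $a$, $b$ correspondences, then $(X,p)\simeq(X,q)$ via $a$ and $b$; applying this inductively along the chain $\pi_l = \pi^{(1)}_l \rightsquigarrow \pi^{(2)}_l \rightsquigarrow \cdots \rightsquigarrow \pi^{(s+1)}_l = p_l$ yields $(X,\pi_l)\simeq(X,p_l)$.

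The main obstacle is bookkeeping rather than conceptual: one must verify that at each intermediate stage the factorisation of the new idempotent into a product $a\circ b$ with $b\circ a$ again idempotent really is compatible with the degree conventions for correspondences, so that the cited ``$a\circ b$ vs.\ $b\circ a$'' motive lemma genuinely applies in $CH_d(X\times X)$, and one must check that the process does not disturb idempotency at intermediate stages --- but this is guaranteed term-by-term by Lemma \ref{linalg} itself, which already asserts that the outputs are idempotents with the improved vanishing pattern. So the proposition follows by combining Lemma \ref{linalg} (applied $s$ times, with $k$ running from $1$ to $s$) with the invertibility of $1-\tfrac12\pi_j$ and the elementary motivic isomorphism lemma.
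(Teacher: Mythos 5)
Your proposal is correct and takes essentially the same route as the paper: apply Lemma \ref{linalg} iteratively until all off-diagonal compositions vanish, and verify the isomorphism of motives after each application. The paper simply writes the isomorphism down explicitly as $p_l \circ \pi_l$ with inverse $\pi_l \circ p_l$, which is what your ``$a\circ b$ versus $b\circ a$'' reduction amounts to once one observes that the semi-orthogonality gives $(1-\frac{1}{2}\pi_j)\circ\pi_l=\pi_l$ for $j<l$ and $\pi_l\circ(1-\frac{1}{2}\pi_j)=\pi_l$ for $j>l$.
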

\begin{proof}
  In order to produce mutually orthogonal idempotents, it is enough to
  apply lemma \ref{linalg} $(l-1)$-times. It is then enough to check
  the isomorphisms of Chow motives after each application of the
  process of lemma \ref{linalg}. Such isomorphisms are simply given by
  the correspondences $p_l \circ \pi_l$ ; the inverse of $p_l \circ
  \pi_l$ is $\pi_l \circ p_l$ as can be readily checked.
\end{proof}

This way we get mutually orthogonal idempotents $p_0, \ldots, p_{d_X -
  d_B}$ such that $(X,p_l) \simeq \h(B)(l)$. In order to conclude it
would be nice to know that $CH_*(X) = CH_*(X,\sum p_l)$.  In that case
$(X,\Delta_X - \sum p_l)$ is a Chow motive with trivial Chow groups
which implies that $\Delta_X = \sum p_l$ and hence $\h(X) \simeq
\bigoplus_{l=0}^{d_X-d_B} \h(B)(l)$.  However, it is not clear how to
prove from here that $CH_*(X) = CH_*(X,\sum p_l)$ and this explains
why the proof of the next section might seem convoluted.

\subsection{Proof of theorem \ref{Murrequadrics}}

We now assume that $f : X \r S$ is a flat dominant morphism defined
over $\C$ from a smooth projective variety $X$ to a smooth projective
surface $S$ whose closed fibres $X_s$ satisfy $CH_l(X_s)=\Q$ for all
$l \leq \frac{d_X-3}{2}$.
\medskip

The general strategy for proving theorem \ref{Murrequadrics} consists
in exhibiting some idempotents modulo rational equivalence with a
prescribed action on Chow groups or cohomology groups and then to turn
them into an orthogonal family. We do this step by step. Here's a
rough outline. At each step we check that the idempotents form an
ordered family which is ``semi-orthogonal'' in the sense that $P_i
\circ P_j = 0$ for $j>i$.  This makes it possible to run the
non-commutative Gram-Schmidt process of lemma \ref{linalg} to get an
orthogonal family of idempotents. Such an orthonormalising process
does not affect the action of the idempotents on cohomology.  At each
step we need to keep track of the action of the idempotents on the
Chow groups of $X$.  For this purpose we check at each step that $P_j
\circ P_i$ acts trivially on $CH_l(X)$ for all $l$ and all $j > i$.

First we construct idempotents $\pi_{2i}^{tr}$ that factor through
surfaces and ``not through curves''.

Then for $l \leq \lfloor \frac{d_X - 3}{2} \rfloor$ we construct
idempotents $p_{2l}^{alg}$ and $p_{2l+1}$. We check that those
idempotents act the way we want them to on the Chow groups of $X$. We
deduce that they act as wanted on the cohomology of $X$.

We then define $p_{d_X-1}^{alg}$ if $d_X$ is odd and mutually
orthogonal idempotents $p_{d_X-2}^{alg}$ and $p_{d_X-1}$ if $d_X$ is
even. We use a different construction than the one before as here we
check directly that they act the way we want on the cohomology of $X$.

Finally we define $p_{2l} := p_{2l}^{alg} + p_{2l}^{tr}$ for $2l <
d_X$, and $p_l := {}^tp_{2d_X-l}$ for $l>d_X$.  \medskip

\noindent \emph{Step 1.} Let $ \pi^{tr,S}_2 \in CH_2(S \times S)$ be
an idempotent with the following properties. Its homology class is the
orthogonal projector on the orthogonal complement of $H_{1,1}(X) \cap
H_2(X,\Q)$ inside $H_2(X,\Q)$ with respect to the choice of a
polarisation on $S$. It acts trivially on $CH_1(S)$ and on $CH_2(S)$
and $$(\pi^{tr,S}_2)_* CH_0(S) = \ker (\mathrm{alb}_S : CH_0(S)_\hom
\r \mathrm{Alb}_S(k)).$$ Such an idempotent exists, see
\cite{KMP}.\medskip

\noindent \emph{Step 2.}  From lemma \ref{dominant}, let $n$ be the
non-zero integer such that $\Gamma_f \circ h^{d_X - d_S} \circ
{}^t\Gamma_f = n \cdot \Delta_S \in CH_{2}(S \times S).$ We set for $
2i \neq d_X$, $$\pi^{tr}_{2i} := \frac{1}{n} \cdot h^{d_X - d_S -i+1}
\circ {}^t\Gamma_f \circ \pi_2^{tr,S} \circ \Gamma_f \circ h^{i-1} \in
CH_{d_X}(X \times X).$$ It is understood that $h^l=0$ for $l < 0$.
Because the correspondence $h$ is self-dual (that is $h={}^th$) we see
that $$\pi^{tr}_{2d_X - 2i} = {}^t\pi^{tr}_{2i}.$$ It is expected that
the correspondence $\pi^{tr}_{2i}$ induces the projector on the
orthogonal complement of $H_{i,i}(X) \cap H_{2i}(X,\Q)$ inside
$H_{2i}(X,\Q)$. This will become apparent at the end of step 6.
\medskip

\noindent \emph{Step 3. Orthogonality relations among the
  $\pi^{tr}_{2i}$.}

\begin{proposition} \label{semiorthogonality} The $\pi^{tr}_{2i}$'s
  satisfy the following identities: \medskip

    $\bullet$ $\pi^{tr}_{2i} \circ \pi^{tr}_{2i} = \pi^{tr}_{2i}$ for
    $2i \neq d_X$,

    $\bullet$ $\pi^{tr}_{2i} \circ \pi^{tr}_{2j} = 0$ for all $i<j$
    with $2i,2j \neq d_X$.
  \end{proposition}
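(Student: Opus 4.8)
The plan is to obtain both identities from one direct computation, unwinding the definition of $\pi^{tr}_{2i}$ from Step 2 and using only associativity of composition of correspondences together with Lemmas \ref{dominant} and \ref{orth}. Writing $d_S=2$ and recalling the convention $h^m=0$ for $m<0$, the correspondence $\pi^{tr}_{2i}=\frac1n h^{d_X-d_S-i+1}\circ{}^t\Gamma_f\circ\pi_2^{tr,S}\circ\Gamma_f\circ h^{i-1}$ is already zero unless $1\le i\le d_X-d_S+1$, in which case every exponent of $h$ that occurs is nonnegative. So in proving the asserted relations I would first reduce to the range $1\le i\le j\le d_X-d_S+1$, all other cases being vacuous.

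Next I would compute, for such $i\le j$, using associativity,
$$\pi^{tr}_{2i}\circ\pi^{tr}_{2j}=\frac{1}{n^2}\, h^{d_X-d_S-i+1}\circ{}^t\Gamma_f\circ\pi_2^{tr,S}\circ\Big(\Gamma_f\circ h^{i-1}\circ h^{d_X-d_S-j+1}\circ{}^t\Gamma_f\Big)\circ\pi_2^{tr,S}\circ\Gamma_f\circ h^{j-1}.$$
Here I would use, exactly as in the proof of Lemma \ref{orth}, that $h$ is independent of the choice of smooth hyperplane section up to rational equivalence and that $h^a\circ h^b=h^{a+b}$ as self-correspondences of $X$; thus the bracketed factor equals $\Gamma_f\circ h^{d_X-d_S-(j-i)}\circ{}^t\Gamma_f$, and the constraint $1\le i\le j\le d_X-d_S+1$ guarantees $0\le d_X-d_S-(j-i)\le d_X-d_S$, so this is a bona fide power of $h$ to which Lemmas \ref{dominant} and \ref{orth} apply.

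It then remains to split into the two cases. When $j=i$ (and $2i\neq d_X$), Lemma \ref{dominant} identifies the bracketed factor with $n\cdot\Delta_S$, and since $\pi_2^{tr,S}$ is idempotent by Step 1 one has $\pi_2^{tr,S}\circ(n\Delta_S)\circ\pi_2^{tr,S}=n\,\pi_2^{tr,S}$, whence $\pi^{tr}_{2i}\circ\pi^{tr}_{2i}=\pi^{tr}_{2i}$. When $i<j$ (and $2i,2j\neq d_X$) the exponent $d_X-d_S-(j-i)$ is strictly less than $d_X-d_S$, so Lemma \ref{orth} makes the bracketed factor vanish and hence $\pi^{tr}_{2i}\circ\pi^{tr}_{2j}=0$. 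There is no real obstacle in this argument: everything of substance is already contained in Lemmas \ref{dominant} and \ref{orth} and in the idempotency of $\pi_2^{tr,S}$. The only points needing care are the identity $h^a\circ h^b=h^{a+b}$ between self-correspondences (which relies on $h$ being well defined up to rational equivalence and on a Bertini argument letting general linear sections meet properly, as in the proof of Lemma \ref{orth}) and the bookkeeping of exponents, which one must check keeps the middle power of $h$ inside $[0,d_X-d_S]$ so that one of the two lemmas is applicable; outside the evident range of indices the correspondences $\pi^{tr}_{2i}$ are simply zero.
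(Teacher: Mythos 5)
Your proof is correct and follows essentially the same route as the paper: expand the definition, combine the inner powers of $h$ into $\Gamma_f\circ h^{d_X-d_S-(j-i)}\circ{}^t\Gamma_f$, then apply Lemma \ref{dominant} together with the idempotency of $\pi_2^{tr,S}$ when $i=j$ and Lemma \ref{orth} when $i<j$. The extra bookkeeping about the range of $i$ and the convention $h^m=0$ for $m<0$ is harmless and only makes explicit what the paper leaves implicit.
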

\begin{proof}
  By definition of the $\pi^{tr}_{2i}$'s we have $$\pi^{tr}_{2i} \circ
  \pi^{tr}_{2j} = \frac{1}{n^2} \cdot h^{d_X - d_S -i+1} \circ
  {}^t\Gamma_f \circ \pi_2^{tr,S} \circ \Gamma_f \circ h^{d_X - d_S
    +i-j} \circ {}^t\Gamma_f \circ \pi_{2}^{tr,S} \circ \Gamma_f \circ
  h^{j-1}.$$ If $i=j$, then lemma \ref{dominant} gives $\pi^{tr}_{2i}
  \circ \pi^{tr}_{2i} = \pi^{tr}_{2i}$. If $i<j$, then lemma
  \ref{orth} ensures that $\Gamma_f \circ h^{d_X - d_S +i-j} \circ
  {}^t\Gamma_f=0$ so that $\pi^{tr}_{2i} \circ \pi^{tr}_{2j} = 0$.
 \end{proof}

Because we will need to keep track of the action of the idempotents on
the Chow groups of $X$ after orthonormalising the family
$\{\pi^{tr}_{2i} : 2i \neq d_X\}$, we state the following.

\begin{proposition} \label{trivialaction} The correspondence
  $\pi^{tr}_{2j} \circ \pi^{tr}_{2i}$ acts trivially on $CH_*(X)$ for
  all $i \neq j$.
\end{proposition}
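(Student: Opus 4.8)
The plan is to expand $\pi^{tr}_{2j} \circ \pi^{tr}_{2i}$ using the definitions and then use the action on Chow groups of the building blocks. Recall that
$$\pi^{tr}_{2j} \circ \pi^{tr}_{2i} = \frac{1}{n^2} \cdot h^{d_X - d_S -j+1} \circ {}^t\Gamma_f \circ \pi_2^{tr,S} \circ \Gamma_f \circ h^{d_X - d_S +j-i} \circ {}^t\Gamma_f \circ \pi_{2}^{tr,S} \circ \Gamma_f \circ h^{i-1}.$$
There are two cases. If $i < j$, then by Lemma \ref{orth} the middle piece $\Gamma_f \circ h^{d_X - d_S + j - i} \circ {}^t\Gamma_f$ already vanishes (since the exponent of $h$ exceeds $d_X - d_S$), so $\pi^{tr}_{2j} \circ \pi^{tr}_{2i} = 0$ as a correspondence, hence certainly acts trivially on $CH_*(X)$. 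The interesting case is $i > j$, where I must show the composite acts as zero on every $CH_l(X)$ even though it need not vanish as a correspondence.

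For the case $i > j$, I would trace through the action on a cycle $z \in CH_l(X)$ from right to left. Applying $\Gamma_f \circ h^{i-1}$ lands us in $CH_{l-i+1}(S)$; then $\pi_2^{tr,S}$ is applied, and by Step 1 the operator $\pi_2^{tr,S}$ acts trivially on $CH_1(S)$ and on $CH_2(S)$, while on $CH_0(S)$ its image lands in $\ker(\mathrm{alb}_S)$. So the only way to get a nonzero contribution after the first application of $\pi_2^{tr,S}$ is if $l - i + 1 = 0$, i.e.\ $l = i - 1$, and then the output lies in $\ker(\mathrm{alb}_S) \subseteq CH_0(S)_{\hom}$. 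Next we apply ${}^t\Gamma_f \circ h^{\cdots}$ to push back up to $X$ and then $\Gamma_f \circ h^{d_X - d_S + j - i}$ to come back down to $S$; tracking dimensions, the resulting class lies in $CH_{j-1 - (i-1)}(S) = CH_{j-i}(S)$, but since $i > j$ this group is zero. Hence the composite is zero on $CH_l(X)$ for every $l$.

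The main obstacle is bookkeeping: one must verify carefully that $\pi_2^{tr,S}$ genuinely kills $CH_1(S)$ and $CH_2(S)$ (which is part of the defining properties recorded in Step 1) so that the only surviving input degree is $CH_0(S)_{\hom}$, and then that the second round of pushforward–pullback lands in a negative-dimensional (hence zero) Chow group when $i > j$. A small subtlety is that when $i > j$ the exponent $d_X - d_S + j - i$ of the middle $h$ may be negative, in which case $h^{\cdot} = 0$ by the convention in Step 2 and the composite vanishes outright; one should note this and otherwise run the dimension count above. No finite-dimensionality or cohomological input is needed — the argument is purely a dimension/degree count combined with the Chow-theoretic properties of $\pi_2^{tr,S}$ from Step 1.
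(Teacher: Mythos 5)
Your overall strategy --- push a cycle down to $S$, use that $\pi_2^{tr,S}$ acts nontrivially only on $CH_0(S)$, and count dimensions --- is exactly the mechanism behind the paper's (one-line) proof, which simply observes that $\pi^{tr}_{2i}$ factors through $\pi_2^{tr,S}$ and hence acts trivially on $CH_l(X)$ for $l \neq i-1$; for $i\neq j$ the composite then kills every $CH_l(X)$. However, as written your argument contains two concrete errors. First, the case split is backwards: Lemma \ref{orth} gives $\Gamma_f \circ h^m \circ {}^t\Gamma_f = 0$ for $m < d_X - d_S$, and the middle exponent in $\pi^{tr}_{2j}\circ\pi^{tr}_{2i}$ is $d_X - d_S + j - i$, so the correspondence vanishes outright when $j<i$, not when $i<j$. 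Your claim that $\Gamma_f \circ h^m \circ {}^t\Gamma_f$ vanishes because ``the exponent exceeds $d_X-d_S$'' is false: for $m = d_X - d_S +1$ this correspondence sends $[S]$ to $f_*[H^{m}]$, in general a nonzero divisor class on $S$, so it is not zero. Second, there is a sign error in the dimension count for the remaining case: after the middle descent the class lies in $CH_{i-j}(S)$, not $CH_{j-i}(S)$. When $i>j$ this is $CH_1(S)$, $CH_2(S)$ or a group that vanishes for degree reasons; it is not ``zero because the index is negative,'' but is killed because the second copy of $\pi_2^{tr,S}$ acts trivially on $CH_1(S)$ and $CH_2(S)$ (Step 1) and $CH_k(S)=0$ for $k>2$. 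It is in the case $i<j$ that the index $i-j$ is negative and the Chow group vanishes identically.

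With those two corrections your proof closes and coincides with the intended one: in every case with $i \neq j$ the chain of operations either hits a negative-index Chow group or feeds $CH_1(S)$ or $CH_2(S)$ into $\pi_2^{tr,S}$, which annihilates them. I would encourage you to phrase it the way the paper does --- record once that $(\pi^{tr}_{2i})_*$ is zero on $CH_l(X)$ for $l \neq i-1$ and preserves $CH_{i-1}(X)$, and then the statement for the composite is immediate --- since that formulation avoids the bookkeeping that tripped you up here.
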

\begin{proof} The correspondence $\pi^{tr}_{2i}$ factors through
  $\pi_2^{tr,S}$ and hence, thanks to step 1, $\pi^{tr}_{2i}$ acts
  trivially on $CH_j(X)$ for $j \neq i-1$.
\end{proof}

\noindent \emph{Step 4. Orthonormalising the $\pi^{tr}_{2i}$.}  By
proposition \ref{GS}, after having applied lemma \ref{linalg} a finite
number of times to the set of idempotents $\{\pi_{2i}^{tr} : 2i \neq
d_X\}$, we get a set of mutually orthogonal idempotents $\{p_{2i}^{tr}
: 2i \neq d_X\}$ such that the Chow motives $(X,\pi_{2i}^{tr})$ and
$(X,p_{2i}^{tr})$ are isomorphic for all $i$ with $2i \neq d_X$.
\begin{proposition} \label{action1} Let $2i \neq d_X$.  The action
  of $p^{tr}_{2i}$ on $CH_l(X)$ coincides with the action of
  $\pi^{tr}_{2i}$ for all $l$.
\end{proposition}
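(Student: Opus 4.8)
\emph{Strategy.} The plan is to show that passing from correspondences to their action on $CH_*(X)$ turns every Gram--Schmidt correction factor into an operator that fixes the relevant idempotent, so that the orthonormalisation of Step~4 does not change the action on Chow groups at all.

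\emph{The single-degree property.} First I would isolate the mechanism already behind Proposition~\ref{trivialaction}. Since $\pi^{tr}_{2i}=\frac{1}{n}\, h^{d_X-d_S-i+1}\circ{}^t\Gamma_f\circ\pi^{tr,S}_2\circ\Gamma_f\circ h^{i-1}$ factors through $\pi^{tr,S}_2$, and since by Step~1 the correspondence $\pi^{tr,S}_2$ kills $CH_1(S)$ and $CH_2(S)$ while $CH_k(S)=0$ for $k\notin\{0,1,2\}$, a degree count shows that the endomorphism $e_i:=(\pi^{tr}_{2i})_*$ of $CH_*(X)$ vanishes on $CH_l(X)$ for every $l\neq i-1$. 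As $\pi^{tr}_{2i}$ is a correspondence of degree $0$, it preserves the grading, so $e_i$ is an idempotent endomorphism of $CH_*(X)$ concentrated in Chow-degree $i-1$. Consequently the $e_i$ (for $2i\neq d_X$) are \emph{mutually orthogonal} idempotents of $\mathrm{End}_\Q(CH_*(X))$: for $i\neq j$ the degrees $i-1$ and $j-1$ differ, hence $e_i\circ e_j=e_j\circ e_i=0$.

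\emph{Running the orthonormalisation.} Then I would feed this into the process of Step~4. Recall that $p^{tr}_{2i}$ is obtained from $\{\pi^{tr}_{2i}:2i\neq d_X\}$ by finitely many applications of Lemma~\ref{linalg}, a single application replacing the current family $\{q_j\}$ of idempotents by idempotents of the form $q_i'=L_i\circ q_i\circ R_i$, where $L_i$ and $R_i$ are compositions of factors of the shape $(1-\frac{1}{2}q_j)$ with $j\neq i$. I claim that at every stage of this process the current idempotent labelled $i$ acts on $CH_*(X)$ exactly as $e_i$; the Proposition is the case of the last stage. The base case is the definition of $e_i$. For the inductive step, apply the ring homomorphism $\alpha\mapsto\alpha_*$ from $CH_{d_X}(X\times X)$ to $\mathrm{End}_\Q(CH_*(X))$ to the relation $q_i'=L_i\circ q_i\circ R_i$: by the inductive hypothesis $(q_j)_*=e_j$ for all $j$, so $(q_i')_*$ is a composition of $e_i$ with factors $(1-\frac{1}{2}e_j)$, $j\neq i$, on the left and on the right. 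Since $e_j\circ e_i=e_i\circ e_j=0$ for $j\neq i$, each such factor satisfies $(1-\frac{1}{2}e_j)\circ e_i=e_i$ and $e_i\circ(1-\frac{1}{2}e_j)=e_i$, so the whole expression collapses to $e_i$, i.e.\ $(q_i')_*=e_i$. This proves the claim, and taking the final stage yields $(p^{tr}_{2i})_*=e_i=(\pi^{tr}_{2i})_*$ as endomorphisms of $CH_*(X)$, as desired.

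\emph{Where the work is.} The only real input is the single-degree property: once one knows that $(\pi^{tr}_{2i})_*$ is supported in Chow-degree $i-1$, so that the images $e_i$ form a system of mutually orthogonal idempotents, everything else reduces to the elementary fact that an idempotent $e_i$ is fixed on both sides by multiplication by $1-\frac{1}{2}e_j$ whenever $e_ie_j=e_je_i=0$. The only point that needs a little care is to propagate this property through the iterated Gram--Schmidt, which is precisely what the induction above records.
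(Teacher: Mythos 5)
Your proof is correct and follows essentially the same route as the paper: the paper's one-line argument invokes Proposition \ref{trivialaction} (whose proof is exactly your ``single-degree property'', namely that $\pi^{tr}_{2i}$ factors through $\pi_2^{tr,S}$ and hence acts trivially on $CH_l(X)$ for $l \neq i-1$) together with the shape of the formula in Lemma \ref{linalg}. You have merely written out in full the induction through the iterated Gram--Schmidt process that the paper leaves implicit.
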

\begin{proof}
  Considering the formula of lemma \ref{linalg} that defines the
  idempotents $p^{tr}_{2i}$ inductively from the the idempotents
  $\pi^{tr}_{2i}$, this follows from proposition \ref{trivialaction}.
\end{proof}

\noindent \emph{Step 5.} Let's define the following idempotent $$Q :=
\Delta_X - \sum_{2i \neq d_X} p_{2i}^{tr} \in CH_{d_X}(X
\times X).$$

\begin{definition}
  Let $(X,P)$ be a Chow motive. The subgroup of $CH_i(X,P)$ consisting
  of algebraically trivial cycles is denoted $CH_i(X,P)_\alg$. This
  subgroup can be shown to coincide with the image of the map $P_* :
  CH_i(X)_\alg \r CH_i(X)_\alg$. It is said to be \emph{representable}
  if there exist a curve $C$ and a correspondence $\alpha \in
  \Hom(\h_1(C)(i),(X,P))$ such that the induced map $\alpha_* :
  CH_0(C)_\alg \r CH_i(X,P)_\alg$ is surjective.
\end{definition}

\begin{proposition}
  The group $CH_l(X,Q)_\alg$ of $l$-cycles modulo rational equivalence
  which are algebraically equivalent to zero is representable for all
  $l \leq \lfloor \frac{d_X - 3}{2} \rfloor$.
\end{proposition}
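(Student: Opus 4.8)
The plan is to reduce the statement, via the isomorphisms of Theorem \ref{main}, to a statement about the algebraically trivial part of the Chow groups of the surface $S$, which is automatically representable because $S$ is a surface. First I would recall that, by definition, $Q = \Delta_X - \sum_{2i \neq d_X} p_{2i}^{tr}$, and that the idempotents $p_{2i}^{tr}$ have the same action on $CH_l(X)$ as the $\pi_{2i}^{tr}$ by Proposition \ref{action1}. So it suffices to understand $(\Delta_X - \sum_{2i \neq d_X} \pi_{2i}^{tr})_* CH_l(X)$ for $l \leq \lfloor \frac{d_X-3}{2} \rfloor$. Corollary \ref{corollary-quadric}-style hypotheses (here the fibres $X_s$ satisfy $CH_l(X_s)=\Q$ for $l \leq \frac{d_X-3}{2}$, which is exactly the hypothesis of Corollary \ref{corollary-main} for these $l$ with $d_B = 2$) give, for each such $l$, an isomorphism $\bigoplus_{i=0}^{d_X-2} h^{d_X-2-i} \circ f^* : \bigoplus_{i=0}^{d_X-2} CH_{l-i}(S) \xrightarrow{\ \sim\ } CH_l(X)$, with inverse built from $\bigoplus_j f_* \circ h^j$.

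Next I would track how $\pi_{2i}^{tr}$ acts under this decomposition. By construction $\pi_{2i}^{tr} = \frac{1}{n} h^{d_X-d_S-i+1} \circ {}^t\Gamma_f \circ \pi_2^{tr,S} \circ \Gamma_f \circ h^{i-1}$, so on the summand indexed by $i-1$ (i.e.\ on $h^{d_X-2-(i-1)} \circ f^* CH_{l-(i-1)}(S) = h^{d_X-1-i} \circ f^* CH_{l-i+1}(S)$) it acts, up to the isomorphism, as $\pi_2^{tr,S}$ acting on $CH_{l-i+1}(S)$, and on all other summands it acts as zero (this is the content of Proposition \ref{trivialaction}). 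Hence, writing $CH_l(X) \cong \bigoplus_{i=0}^{d_X-2} CH_{l-i}(S)$ via $(*)$, the projector $Q$ acts on the $j$-th summand $CH_{l-j}(S)$ as $\Delta_S - \pi_2^{tr,S}$ when $2(j+1) \neq d_X$ — i.e.\ as $\pi_0^{S} + \pi_1^{S} + \pi_3^{S} + \pi_4^{S}$ for an appropriate Chow--K\"unneth decomposition of $S$, using that $\pi_2^{tr,S}$ acts on $CH_*(S)$ as its ``transcendental'' part — and as the full identity $\Delta_S$ on the single summand with $2(j+1) = d_X$ (if $d_X$ is even; if $d_X$ is odd there is no such summand). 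By Step 1, $(\Delta_S - \pi_2^{tr,S})_* CH_0(S) = $ image of $\mathrm{alb}_S$, which is $CH_0(S)_\alg$ modulo the representable (indeed, abelian-variety-controlled) Albanese kernel — but more simply: $(\Delta_S - \pi_2^{tr,S})_* CH_0(S)_\alg$ is the image of the Albanese map, and $(\Delta_S - \pi_2^{tr,S})_*$ is the identity on $CH_1(S)_\alg = \mathrm{Pic}^0(S)$ and zero on $CH_2(S)_\alg$.

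So $CH_l(X,Q)_\alg = Q_* CH_l(X)_\alg$ decomposes as a direct sum, over $j$, of copies of $(\Delta_S - \pi_2^{tr,S})_* CH_{l-j}(S)_\alg$ (together with one copy of $CH_{l-j}(S)_\alg$ when $d_X$ is even and $2(j+1)=d_X$). Each term is $CH_{l-j}(S)_\alg$ or the image of $\mathrm{alb}_S$, and in all cases it is representable: $CH_2(S)_\alg$ is controlled by the Albanese of $S$ (a curve's worth of data, since $\mathrm{Alb}_S$ is an abelian variety, which is dominated by the Jacobian of a curve by the standard Bertini/Lefschetz argument), $CH_1(S)_\alg = \mathrm{Pic}^0(S) = \mathrm{Pic}^0$ of a smooth hyperplane curve, and $CH_0(S)_\alg$ for $l-j \leq -1$ or $l-j \geq 3$ is zero. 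To make this precise I would invoke the known fact (again from \cite{KMP} or the standard theory of the transcendental motive of a surface) that $\mathfrak{h}_1(S)$ is isomorphic to $\mathfrak{h}_1(C)(0)$ for a suitable curve $C$ — equivalently that $\mathrm{Alb}_S$ is an isogeny factor of $JC$ — and assemble the correspondence $\alpha \in \Hom(\mathfrak{h}_1(C)(l), (X,Q))$ as a sum over $j$ of $(*)_j \circ (\text{the surface-level correspondence }\mathfrak{h}_1(C)(l) \to \mathfrak{h}_1(C)(l-j) \to (S, \Delta_S - \pi_2^{tr,S})(l-j))$, using $\mathfrak{h}_1(C)(l) \to \mathfrak{h}_1(C)(l-j)$ induced by $h^j$-type correspondences on $C$ (or rather by the appropriate degree maps); the only nonzero contributions come from $j$ with $0 \le l-j$ and, by dimension reasons on $S$, essentially $l - j \in \{0,1,2\}$. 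The main obstacle is purely bookkeeping: correctly identifying the action of $\pi_2^{tr,S}$ (and hence of $Q$) on each graded piece $CH_{l-j}(S)$ of the decomposition $(*)$ — in particular being careful about the anomalous summand with $2(j+1)=d_X$ when $d_X$ is even, where $Q$ restricts to the \emph{full} identity rather than to $\Delta_S - \pi_2^{tr,S}$ — and then packaging the surface-level representability ($\mathrm{Alb}_S$ an isogeny factor of a Jacobian) into a single correspondence from $\mathfrak{h}_1(C)(l)$. Everything else is a direct consequence of Theorem \ref{main}, Proposition \ref{trivialaction}, Proposition \ref{action1}, and Step 1.
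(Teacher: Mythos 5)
Your argument is essentially the paper's own proof: replace $Q$ by $Q' = \Delta_X - \sum_{2i\neq d_X}\pi_{2i}^{tr}$ using Proposition \ref{action1}, transport $Q'$ through the isomorphism of Corollary \ref{corollary-main} so that it acts through $(\Delta_S - \pi_2^{tr,S})_*$ on the summands $CH_{l-j}(S)$, and conclude from the representability of $(\Delta_S - \pi_2^{tr,S})_*CH_k(S)_{\mathrm{alg}}$ for a surface. The one point you flag as an obstacle --- the ``anomalous'' summand with $2(j+1)=d_X$ on which $Q$ restricts to the full identity --- is in fact vacuous in the stated range, since $l \leq \lfloor \frac{d_X-3}{2}\rfloor$ forces $j = \frac{d_X}{2}-1 > l$ and hence $CH_{l-j}(S)=0$.
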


\begin{proof}
  By proposition \ref{action1}, the action of $Q$ on $CH_l(X)$
  coincides with the action of $Q' := \Delta_X -
  \sum_{2i \neq d_X} \pi_{2i}^{tr}$. Consider the map
  $$\Phi := \bigoplus_{i=l-2}^{l} h^{d_X -d_S -i} \circ f^* \circ
  (\Delta_S - \pi_2^{tr,S})_* \ : \ \bigoplus_{i=l-2}^{l} CH_{l-i}(S)
  \longrightarrow CH_l(X) .$$ By corollary \ref{corollary-main}, the
  map $\Psi := \bigoplus_{i=l-2}^{l} f_* \circ h^i : CH_l(X) \r
  \bigoplus_{i=l-2}^{l} CH_{l-i}(S)$ is an isomorphism. Moreover, we
  have $Q' = \Phi \circ \Psi$ so that $\im(\Phi) = (Q')_*CH_l(X)$. We
  can then conclude that $Q_*CH_l(X)_\alg$ is representable because
  $(\Delta_S - \pi_2^{tr,S})_* CH_k(S)_\alg$ is representable for all
  $k$.
\end{proof}

\noindent \emph{Step 6. The idempotents $p_{2i}^{alg}$ and
  $p_{2i+1}$.}  We first construct idempotents $p_{2l}^{alg}$ and
$p_{2l+1}$ for $l \leq \lfloor \frac{d_X - 3}{2} \rfloor$ that act
appropriately on Chow groups. Then we construct idempotents
$p_{d_X-2}^{alg}$ and $p_{d_X-1}$ if $d_X$ even and an idempotent
$p_{d_X-1}^{alg}$ if $d_X$ odd that act appropriately on homology.
\medskip

In order to define  $p_{2l}^{alg}$ and $p_{2l+1}$ for $l \leq
\lfloor \frac{d_X - 3}{2} \rfloor$ we use the construction of \cite[\S
1]{Vial3}. Let's recall it. By Jannsen's theorem \cite{Jannsen3}, the
category of motives for numerical equivalence is abelian semi-simple.
Therefore we can construct idempotents modulo numerical equivalence
$\overline{p}_{2l}^{alg}$ and $\overline{p}_{2l+1}$ such that
\begin{center}
  $(X, \overline{p}_{2l}^{alg}) = \sum \im (\overline{\mathds{1}}(l)
  \r (X, \overline{Q} ))$ and $(X, \overline{p}_{2l+1}) = \sum \im
  (\overline{\h}_1(C)(l) \r (X, \overline{Q} ))$.
\end{center}
Here the first sum runs over all morphisms $\overline{\mathds{1}}(l)
\r (X, \overline{Q} )$ and the second sum runs over all curves $C$ and
all morphisms $\overline{\h}_1(C)(l) \r (X, \overline{Q} )$. We then
see that there is an integer $n$ such that $(X,
\overline{p}_{2l}^{alg})$ is isomorphic to
$\overline{\mathds{1}}(l)^{\oplus n}$ and a curve $C$ such that $(X,
\overline{p}_{2l+1})$ is isomorphic to a direct summand of
$\overline{\h}_1(C)(l)$. Because $\End(\overline{\mathds{1}}^{\oplus
  n}) = \End(\mathds{1}^{\oplus n})$ and $\End(\overline{\h}_1(C)) =
\End({\h}_1(C))$, we can lift the idempotents
$\overline{p}_{2l}^{alg}$ and $\overline{p}_{2l+1}$ to idempotents
$p_{2l}^{alg}$ and $p_{2l+1}$ modulo rational equivalence which are
orthogonal to $\Delta_X - Q$ and such that $(X,p_{2l}^{alg})$ is
isomorphic to $\mathds{1}(l)^{\oplus n}$ and $(X,p_{2l+1})$ is
isomorphic to a direct summand of $\h_1(C)(l)$. If we construct these
idempotents one after the other and replace $Q$ by $Q$ minus the last
constructed idempotent at each step, we see that in addition to being
orthogonal to $\Delta_X - Q$, the idempotents \{$p_{2l}^{alg},p_{2l+1}
: l \leq \lfloor \frac{d_X - 3}{2} \rfloor \}$ can be constructed
so as to form a family of mutually orthogonal idempotents.\medskip

Now we check that these idempotents act the way we want on the Chow
groups of $X$.

\begin{lemma} [lemma 3.3. in \cite{Vial1}] \label{curve} Let $P \in
  CH_{d_X}(X \times X)$ be an idempotent and assume that
  $CH_0(X,P)_\alg$ is representable. Assume also
  that for all curves $C$ and all correspondences $\alpha \in
  \Hom(\h_1(C),(X,P))$ we have that $\alpha$ is numerically trivial.
  Then $CH_0(X,P)=0$.
\end{lemma}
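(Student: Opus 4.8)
The statement to prove is Lemma \ref{curve}: if $P$ is an idempotent with $CH_0(X,P)_\alg$ representable, and every correspondence $\alpha \in \Hom(\h_1(C),(X,P))$ (over all curves $C$) is numerically trivial, then $CH_0(X,P) = 0$. The plan is to exploit the representability hypothesis to reduce the vanishing of $CH_0(X,P)$ to the vanishing of a single homomorphism from a Jacobian, and then to invoke the numerical-triviality hypothesis together with a classical result of Roitman/Bloch on the faithfulness of the action of correspondences between curves and $0$-cycles modulo algebraic equivalence.

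First I would handle the homologically trivial part. By definition $CH_0(X,P)_\alg = P_* CH_0(X)_\alg$, and representability gives a curve $C$ and a correspondence $\alpha \in \Hom(\h_1(C)(0),(X,P)) = \Hom(\h_1(C),(X,P))$ such that $\alpha_* : CH_0(C)_\alg \r CH_0(X,P)_\alg$ is surjective. We may as well take $C$ smooth projective, so $CH_0(C)_\alg = \Jac(C)(\C) \otimes \Q$ is the rational points of the Jacobian. Now $\alpha$ is by hypothesis numerically trivial; in particular its homology realisation $\alpha_* : H_1(C) \r H_*(X)$ vanishes. Since on $CH_0(C)_\alg \cong \Alb(C)$ the action of a correspondence factors through its action on $H_1$ — this is Roitman's theorem, or rather the statement that a correspondence inducing the zero map on $H_1(C)$ induces a map $CH_0(C)_\alg \r CH_0(X)_\alg$ that is divisible/torsion and hence zero with $\Q$-coefficients — we conclude $\alpha_* = 0$ on $CH_0(C)_\alg$. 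Therefore $CH_0(X,P)_\alg = \alpha_* CH_0(C)_\alg = 0$, i.e. $CH_0(X,P) = CH_0(X,P)_\hom$.

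Next I would kill the rest. We now know $CH_0(X,P)$ is homologically trivial, so there is no ``degree'' part surviving; it remains to see $CH_0(X,P)_\hom = 0$, but we have just shown $CH_0(X,P)_\hom = CH_0(X,P)_\alg$ modulo the finite-dimensional ``Albanese-type'' quotient, and the Albanese quotient of $CH_0(X,P)$ is a sub-quotient of $H_1(X)$ cut out by $P$ — again controlled by morphisms $\h_1(C) \r (X,P)$, all numerically (hence homologically) trivial by hypothesis, so it vanishes. Combining: $CH_0(X,P)$ has trivial Albanese quotient and trivial ``Albanese kernel'' (the latter being exactly $CH_0(X,P)_\alg$ modulo homological equivalence, which is representable and was shown to vanish). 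Hence $CH_0(X,P) = 0$. Since this is cited as ``lemma 3.3 in \cite{Vial1}'' the clean way to present it here is to recall that argument: representability forces $CH_0(X,P)_\alg$ to be the image of a Jacobian under a correspondence, numerical triviality of that correspondence forces the image to be zero, and a separate (easier) argument using $\h_1$-morphisms disposes of the homologically trivial cycles not caught by algebraic equivalence.

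The main obstacle is the precise input needed at the first reduction: one must know that a correspondence $\alpha : C \r X$ which is numerically trivial acts as zero on $CH_0(C)_\alg \otimes \Q$. This is where one uses that the action on $CH_0$ of a smooth projective curve modulo algebraic equivalence is computed by the action on $H_1$ (equivalently, on the Jacobian), a consequence of the fact that $\Hom(\h_1(C), \h_1(X))$ injects into $\Hom(H_1(C), H_1(X))$ after tensoring suitably — or, at the level of $0$-cycles, of Roitman's theorem that $\Alb_{X}$ is an isomorphism on torsion together with the fact that $CH_0(C)_\alg$ is divisible. One should be careful that ``numerically trivial'' for a morphism $\h_1(C) \r (X,P)$ of motives already implies the homology realisation vanishes, since numerical and homological equivalence agree on such $\Hom$-groups (the relevant cycles live on $C \times X$ with $C$ a curve, and there Jannsen's/standard arguments give agreement, or one simply builds it into the hypothesis as stated). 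Granting this, the remaining steps are formal.
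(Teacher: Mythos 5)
There is a genuine gap in your first step. You claim that because $\alpha \in \Hom(\h_1(C),(X,P))$ is numerically (hence homologically) trivial, it acts as zero on $CH_0(C)_\alg$, citing Roitman and divisibility. What vanishing of the homology realisation actually gives is that the composite $J(C)\otimes\Q \r CH_0(X,P)_\alg \r P_*\mathrm{Alb}(X)\otimes\Q$ is zero; it does not force $\alpha_*$ itself to vanish, because the image may land in the Albanese kernel, which is a (possibly enormous) $\Q$-vector space on which divisibility and torsion arguments say nothing. Concretely, take $S$ a K3 surface, $P=\pi_2^{tr,S}$, and $\iota : C\hookrightarrow S$ an ample curve: then $\alpha := \pi_2^{tr,S}\circ\Gamma_\iota\circ\pi_1^C$ is homologically (hence numerically) trivial for weight reasons, yet for suitable $C$ it acts nontrivially on $CH_0(C)_\alg$, since otherwise $CH_0(S)_{\hom}$ would vanish, contradicting Mumford. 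So ``numerically trivial $\Rightarrow$ zero on $CH_0(C)_\alg$'' is false as a standalone implication; it is essentially an instance of the nilpotence/Bloch conjectures. (Of course in this example the representability hypothesis fails, which is exactly the point: that hypothesis must enter the argument in a more structural way than merely supplying a surjection.) Your second paragraph, meant to dispose of the remaining degree part of $CH_0(X,P)$, is also not an argument: Lemma \ref{curve} makes no hypothesis on $\Hom(\mathds{1},(X,P))$, so there is no ``easier separate argument'' available there.

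The paper's proof uses representability via a Bloch--Srinivas decomposition of the diagonal. From $\gamma_*CH_0(C)_\alg = P_*CH_0(X)_\alg$ one sets $\alpha := P\circ\gamma\circ\pi_1^C$ and obtains $P = \alpha\circ\beta + P_2$ with $\beta\in\Hom((X,P),\h_1(C))$ and $P_2$ supported on $D\times X$ for a divisor $D$. By the moving lemma $P_2$ kills $CH_0(X)$, so $\alpha\circ\beta$ acts as the identity on $P_*CH_0(X)$. On the other hand $\beta\circ\alpha\in\End(\h_1(C))$, and here --- unlike on $\Hom(\h_1(C),(X,P))$ --- numerical and rational equivalence genuinely agree ($\End(\h_1(C)) = \End(\overline{\h}_1(C))$, endomorphisms of Jacobians), so $\overline{\alpha}=0$ gives $\beta\circ\alpha=0$, hence $(\alpha\circ\beta)^2=0$, and the identity of $P_*CH_0(X)$ is nilpotent; this kills all of $CH_0(X,P)$ in one stroke, degree part included. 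The missing idea in your write-up is precisely the production of the quasi-inverse $\beta$ by decomposition of the diagonal, which is what allows the numerical-triviality hypothesis to be applied inside $\End(\h_1(C))$ rather than inside $\Hom(\h_1(C),(X,P))$.
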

\begin{proof}
  Let $C$ be a curve and let $\gamma \in CH_1(C \times X)$ be a
  correspondence such that $\gamma_*CH_0(C)_\alg = P_* CH_0(X)_\alg $.
  Let then $\alpha := P \circ \gamma \circ \pi_1^C \in
  \Hom(\h_1(C),(X,P))$. By \cite[Th. 3.6]{Vial4}, which follows a
  decomposition of the diagonal argument \`a la Bloch-Srinivas
  \cite{BS}, we get that $P = P_1 + P_2$ where $P_2$ is supported on
  $D \times X$ for some divisor $D$ in $X$ and $P_1 = \alpha \circ
  \beta$ for some $\beta \in \Hom((X,P),\h_1(C))$.  By Chow's moving
  lemma $P_2$ acts trivially on $CH_0(X)$ so that $P_1 = \alpha \circ
  \beta$ acts as the identity on $P_*CH_0(X)$. By assumption,
  $\overline{\alpha} = 0$ and thus $\overline{\beta} \circ
  \overline{\alpha} = 0$. Because $\End(\h_1(C)) =
  \End(\overline{\h}_1(C))$, we get that $\beta \circ \alpha = 0$. It
  follows that $P_*CH_0(X) = 0$.
\end{proof}

\begin{lemma} \label{point}
  Let $P \in CH_{d_X}(X \times X)$ be an idempotent and assume that
  $CH_0(X,P)$ is a finite-dimensional $\Q$-vector space. Assume also
  that for all correspondences $\alpha \in \Hom(\mathds{1},(X,P))$ we
  have that $\alpha$ is numerically trivial.  Then $CH_0(X,P)=0$.
\end{lemma}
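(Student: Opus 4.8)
The strategy mirrors the proof of Lemma \ref{curve}, but is simpler because the target summand is now generated by points rather than by $h_1$ of a curve. First I would like to reduce to a statement about a decomposition of the idempotent $P$. The hypothesis that $CH_0(X,P) = P_*CH_0(X)$ is a finite-dimensional $\Q$-vector space should, via a Bloch--Srinivas style decomposition of the diagonal argument (as in \cite[Th.~3.6]{Vial4}, which was already invoked in the proof of Lemma \ref{curve}, or more directly via \cite{BS} itself), yield a splitting $P = P_1 + P_2$ in $CH_{d_X}(X\times X)$ in which $P_2$ is supported on $D \times X$ for some divisor $D \subset X$ and $P_1$ factors through a zero-dimensional motive, i.e.\ $P_1 = \alpha \circ \beta$ with $\alpha \in \Hom(\mathds{1}^{\oplus m},(X,P))$ and $\beta \in \Hom((X,P),\mathds{1}^{\oplus m})$ for some integer $m$ (the integer $m$ being the $\Q$-dimension of $CH_0(X,P)$, with $\alpha$ built from a basis of that vector space and $\beta$ the dual family of zero-cycles, using that $CH_0$ of a point pairs perfectly).

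Next, by Chow's moving lemma the cycle $P_2$, being supported on $D \times X$ with $D$ a divisor, acts as zero on $CH_0(X)$: a general zero-cycle can be moved off $D$. Hence $P_1 = \alpha \circ \beta$ acts as the identity on $P_*CH_0(X) = CH_0(X,P)$. Now the numerical-triviality hypothesis enters: each of the $m$ components of $\alpha$ lies in $\Hom(\mathds{1},(X,P))$, so by assumption each is numerically trivial, whence $\overline{\alpha} = 0$ in the category of motives modulo numerical equivalence, and therefore $\overline{\beta}\circ\overline{\alpha} = 0$. The final step is to lift this vanishing back to rational equivalence. Since $\beta \circ \alpha \in \End(\mathds{1}^{\oplus m}) = M_m(\End(\mathds{1})) = M_m(\Q)$, and since the functor from Chow motives to numerical motives is an isomorphism on $\End(\mathds{1}^{\oplus m})$ (it is $M_m(\Q)$ on both sides, cf.\ the use of $\End(\overline{\mathds{1}}^{\oplus n}) = \End(\mathds{1}^{\oplus n})$ in Step 6), we conclude $\beta \circ \alpha = 0$ already modulo rational equivalence. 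Then $P_1 = \alpha \circ \beta$ satisfies $P_1 \circ P_1 = \alpha \circ (\beta \circ \alpha) \circ \beta = 0$, yet $P_1$ acts as the identity on $CH_0(X,P)$; thus the identity endomorphism of $CH_0(X,P)$ equals $(P_1)_* = (P_1 \circ P_1)_* \circ (\text{something}) $ — more cleanly, $(P_1)_* = (P_1)_* \circ (P_1)_*$ is both the identity and zero on $CH_0(X,P)$, forcing $CH_0(X,P) = 0$.

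The main obstacle is getting the decomposition $P = P_1 + P_2$ with $P_1$ genuinely factoring through $\mathds{1}^{\oplus m}$ and not merely through $h_{\le 1}$ of some variety: one must check that finite-dimensionality of $CH_0(X,P)$ (rather than mere representability) is exactly what upgrades the Bloch--Srinivas conclusion from "$P_1$ factors through $h_0 \oplus h_1$ of a curve" to "$P_1$ factors through a sum of copies of $\mathds{1}$". Concretely, representability gives a correspondence from a curve $C$; the extra finite-dimensionality collapses the $h_1(C)$-contribution, because a surjection $CH_0(C)_\alg \twoheadrightarrow CH_0(X,P)_\alg$ onto a finite-dimensional space whose source is (for $C$ of positive genus) infinite-dimensional and "$2$-divisible-like" must kill the algebraically trivial part, leaving only the finitely many point classes. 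One should phrase this carefully, perhaps by first noting $CH_0(X,P)_\alg = 0$ (a finite-dimensional $\Q$-vector space receiving a surjection from $CH_0(C)_\alg$ for a fixed curve $C$ must vanish, since $CH_0(C)_\alg$ is a divisible group) and then arguing that $CH_0(X,P) = CH_0(X,P)/CH_0(X,P)_\alg$ is generated by the classes of finitely many closed points, which is precisely the input needed to build $\alpha$ and $\beta$. Everything downstream of that decomposition is formal.
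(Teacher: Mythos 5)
Your proposal is correct and is exactly the fleshing-out the paper intends: its own ``proof'' is just the remark that one argues as for lemma \ref{curve}, and you reproduce that template with $\h_1(C)$ replaced by $\mathds{1}^{\oplus m}$ --- Bloch--Srinivas decomposition $P=P_1+P_2$ with $P_2$ supported on $D\times X$ (hence acting by zero on zero-cycles) and $P_1=\alpha\circ\beta$ factoring through $\mathds{1}^{\oplus m}$, then $\overline{\alpha}=0$ by hypothesis, $\beta\circ\alpha=0$ because $\End(\mathds{1}^{\oplus m})=\End(\overline{\mathds{1}}^{\oplus m})=M_m(\Q)$, and the identity on $CH_0(X,P)$ equals zero. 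One caveat on your final paragraph: the divisibility argument you sketch for $CH_0(X,P)_\alg=0$ is not valid (a finite-dimensional $\Q$-vector space is itself divisible, so it can perfectly well receive a surjection from a divisible group), and no such detour through a curve or through representability is needed --- finite-dimensionality of $CH_0(X,P)=P_*CH_0(X)$ directly gives finitely many closed points $x_1,\dots,x_m$ whose classes $P_*[x_i]$ span it, and that finite set of points is already the support datum the Bloch--Srinivas argument requires to produce $P_1$ factoring through $\mathds{1}^{\oplus m}$. Everything downstream is, as you say, formal.
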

\begin{proof}
  The lemma can be proved along the same lines as lemma \ref{curve}.
\end{proof}

From lemmas \ref{curve} and \ref{point}, we get

\begin{proposition} \label{induction2} Let $P \in CH_{d_X}(X \times
  X)$ be an idempotent and assume that $CH_0(X,P)_\alg$ is
  representable. Assume that $(X,\overline{P})$ has no direct summand
  isomorphic to $\overline{\mathds{1}}$ or to a direct summand of the
  $\overline{\h}_1$ of a curve.
  Then $CH_0(X,P)=0$. \qed
\end{proposition}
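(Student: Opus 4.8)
The plan is to deduce the statement from Lemmas \ref{curve} and \ref{point} by a short splitting argument on the motive $(X,P)$. First I would observe that the hypothesis ``$CH_0(X,P)_\alg$ is representable'' is precisely the input needed for Lemma \ref{curve}, while the hypothesis on the numerical motive $(X,\overline{P})$ will be used to verify the numerical-triviality conditions in both lemmas. The two lemmas handle two different ``sources'' of zero-cycles on $(X,P)$: correspondences coming from $\h_1$ of a curve (Lemma \ref{curve}) and correspondences coming from the unit motive $\mathds{1}$ (Lemma \ref{point}). So the strategy is to reduce the computation of $CH_0(X,P)$ to a motive whose numerical realisation has no summand of either type, apply both lemmas in turn, and conclude $CH_0(X,P)=0$.

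The key steps, in order, are as follows. First, by Jannsen's semi-simplicity theorem \cite{Jannsen3}, decompose $(X,\overline{P})$ into simple numerical summands; by hypothesis none of these is $\overline{\mathds{1}}$ or a direct summand of $\overline{\h}_1(C)$ for a curve $C$. Second, spell out what the conclusion of Lemma \ref{curve} gives us: for any idempotent $P'$ with $CH_0(X,P')_\alg$ representable such that every $\alpha \in \Hom(\h_1(C),(X,P'))$ is numerically trivial, $CH_0(X,P')=0$; similarly Lemma \ref{point} handles the $\mathds{1}$-part once $CH_0(X,P')$ is known to be finite-dimensional. Third, check the numerical-triviality hypotheses: if some $\alpha \in \Hom(\overline{\h}_1(C),(X,\overline{P}))$ were nonzero, then by semi-simplicity its image would be a nonzero summand of $(X,\overline{P})$ which is a quotient (hence, by semi-simplicity, a direct summand) of $\overline{\h}_1(C)$, contradicting the hypothesis; the same argument rules out nonzero $\alpha \in \Hom(\overline{\mathds{1}},(X,\overline{P}))$. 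Fourth, invoke Lemma \ref{curve}: since $CH_0(X,P)_\alg$ is representable and all correspondences $\h_1(C) \r (X,P)$ are numerically trivial, we get $CH_0(X,P)_\alg = 0$, so $CH_0(X,P) = CH_0(X,P)/CH_0(X,P)_\alg$. Fifth, a Bloch–Srinivas / decomposition-of-the-diagonal style argument (as used in the proof of Lemma \ref{curve}, via \cite[Th. 3.6]{Vial4}) then shows $CH_0(X,P)$ modulo algebraic equivalence is governed by cycles supported away from a divisor, i.e.\ by $\Hom(\mathds{1},(X,P))$-type correspondences, making $CH_0(X,P)$ finite-dimensional. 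Finally apply Lemma \ref{point} to conclude $CH_0(X,P) = 0$.

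The main obstacle I expect is bookkeeping the passage from ``no numerical summand of type $\overline{\mathds{1}}$ or $\overline{\h}_1(\text{curve})$'' to the two separate vanishing statements ``all $\alpha : \h_1(C) \r (X,P)$ are numerically trivial'' and ``all $\alpha : \mathds{1} \r (X,P)$ are numerically trivial'' — this requires using semi-simplicity of numerical motives carefully, since a priori a morphism from $\overline{\h}_1(C)$ could be nonzero even if its image is not isomorphic to all of $\overline{\h}_1(C)$; one needs that the image is a \emph{direct summand} of both source and target, which is exactly what Jannsen's theorem provides. The other delicate point is ensuring the two lemmas can be chained: Lemma \ref{point} requires $CH_0(X,P)$ finite-dimensional, and that finiteness must itself be extracted from representability plus the vanishing coming from Lemma \ref{curve}; since the proof of Lemma \ref{curve} already produces the decomposition $P = P_1 + P_2$ with $P_2$ supported on a divisor, reusing that decomposition gives the required finite-dimensionality. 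Once these two bridging points are in place, the proof is a direct citation of the two lemmas, which is why the statement can be flagged with \qed.
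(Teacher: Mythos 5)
Your proposal is correct and follows exactly the route the paper intends: the paper's entire proof is the phrase ``From lemmas \ref{curve} and \ref{point}, we get'' followed by \qed, and the content you supply — Jannsen semi-simplicity to convert ``no direct summand isomorphic to $\overline{\mathds{1}}$ or to a summand of $\overline{\h}_1(C)$'' into the vanishing of all morphisms from these motives to $(X,\overline{P})$, hence the numerical-triviality hypotheses of both lemmas — is precisely the omitted bridging step. Your careful chaining (Lemma \ref{curve} killing the algebraically trivial part, then finite-dimensionality of the remainder, then Lemma \ref{point}) is the right reading; the only cosmetic remark is that the finite-dimensionality of $CH_0(X,P)$ after $CH_0(X,P)_{\alg}=0$ follows immediately from the injection into $CH_0(X)/CH_0(X)_{\alg}\cong\Q^{c}$, with no need for a further Bloch--Srinivas argument.
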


The next lemma was mentioned to me by Bruno Kahn.

\begin{lemma} \label{induction} Let $P \in CH_{d_X}(X \times X)$ be an
  idempotent and assume that $CH_0(X,P) = 0$. Then there exists a
  smooth projective variety $Y$ of dimension $d_X-1$ and an idempotent
  $P' \in CH_{d_X-1}(Y \times Y)$ such that $(X,P) \simeq (Y,P',1)$.
\end{lemma}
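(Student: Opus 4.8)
The plan is to realize the Chow motive $(X,P)$ as a direct summand of the motive of a smooth hyperplane section of $X$, shifted by a Tate twist. The starting point is the hypothesis $CH_0(X,P) = 0$, i.e. $P_* CH_0(X) = 0$: a cycle class supported at a point (pulled back from $\mathds{1}$) is killed by $P$. By Bloch--Srinivas-type decomposition of the diagonal \cite{BS} (in the precise form used in the proof of Lemma \ref{curve}, e.g. \cite[Th. 3.6]{Vial4}), the vanishing of $P_*CH_0(X)$ forces $P$ to be supported on $D \times X$ for some divisor $D \subset X$; that is, $P = \iota_* P''$ where $\iota : \widetilde{D} \hookrightarrow X$ is (a resolution of) a divisor and $P'' \in CH_{d_X-1}(\widetilde{D} \times X)$. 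More symmetrically, one wants $P$ to factor through the motive of a smooth divisor.

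First I would make the divisor smooth and connected: replace $D$ by a smooth projective $Y$ of dimension $d_X-1$ equipped with a morphism $g : Y \to X$ such that the image of $g_* : CH_{d_X-1}(Y) \to CH_{d_X-1}(X)$ contains the support of $P$, hence $P = g_* \circ Q_1$ and symmetrically, transposing, $P = {}^t P = Q_2 \circ g^!$ in an appropriate sense. Concretely, since $P$ is an idempotent supported on $\widetilde D\times X$, write $P = P\circ P$; using that the first copy of $P$ is supported on $\widetilde D \times X$ and ${}^tP$ is also supported (on $X\times \widetilde D'$), one obtains $P = \Gamma_g \circ u \circ v \circ {}^t\Gamma_{g'}$ for correspondences $u,v$ between the divisors and $X$. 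The cleanest route: set $\alpha := \Gamma_g \circ \beta \in CH_{d_X}(Y\times X)$ and $\beta' \in CH_{d_X-1}(X \times Y)$ with $P = \alpha\circ\beta'$ and $\beta'\circ\alpha$ an idempotent on $\h(Y)(1)$ — wait, degrees: a morphism $(X,P)\to (Y,\mathrm{id},1)$ lives in $CH_{d_X + 0 - 1}(X\times Y) = CH_{d_X-1}(X\times Y)$, and back in $CH_{d_X-1+1}(Y\times X) = CH_{d_X}(Y\times X)$; their composite $\beta'\circ\alpha \in CH_{d_X-1}(Y\times Y)$ is the desired idempotent $P'$, and $\alpha\circ\beta' = P$. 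So I would show $\alpha\circ\beta' = P$ and $\beta'\circ\alpha$ is idempotent using the support condition together with $P\circ P = P$, whence $(X,P)$ is a direct summand of $(Y,\mathrm{id},1) = \h(Y)(1)$ cut out by $P' := \beta'\circ\alpha$, and $(X,P)\simeq (Y,P',1)$ with mutually inverse maps $\alpha$ and $\beta'$.

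The main obstacle is the degree and support bookkeeping needed to turn ``$P$ supported on $D\times X$'' into an honest factorization $P = \alpha \circ \beta'$ through the \emph{identity} motive of a \emph{smooth connected} $Y$ of dimension exactly $d_X-1$, and then to check $\beta'\circ\alpha$ is idempotent with $\alpha\circ(\beta'\circ\alpha)\circ\beta' = P$. Two technical points deserve care: (i) passing from the possibly singular, possibly reducible $D$ to a smooth connected $Y$ — one resolves singularities and, if $D$ is reducible, either takes disjoint unions (keeping $Y$ smooth of pure dimension $d_X-1$ but not connected, which is harmless) or embeds a component into a smooth hypersurface of $X$ of the right dimension via Bertini; (ii) verifying that composition with the resolution/normalization map does not lose the factorization, which is where $P\circ P = P$ is used to ``absorb'' the correction terms that are themselves supported on divisors. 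Once the factorization is in place, the isomorphism $(X,P)\simeq (Y,P',1)$ and the idempotency of $P'$ are formal consequences of $\alpha\circ\beta' = P = P\circ P$ and the standard argument that $\beta'\circ\alpha$ is then automatically idempotent (its square equals $\beta'\circ(\alpha\circ\beta')\circ\alpha = \beta'\circ P\circ\alpha = \beta'\circ\alpha$, using $P\circ\alpha = \alpha$ and $\beta'\circ P = \beta'$, which in turn follow from $\alpha = P\circ\alpha$, $\beta' = \beta'\circ P$ by construction).
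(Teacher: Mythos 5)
Your proposal is correct and follows essentially the same route as the proof of the result the paper actually invokes (the paper gives no argument beyond citing \cite[Theorem 2.1]{VialCK}): a Bloch--Srinivas decomposition shows that $P_*CH_0(X)=0$ (over $\C$, a universal domain, so the vanishing persists over $k(X)$) forces $P$ to be supported on $D\times X$ for a divisor $D$, one lifts the cycle to $\widetilde{D}\times X$ for a resolution $\iota\colon\widetilde{D}\to X$ so that $P=\Gamma\circ{}^{t}\Gamma_{\iota}$ factors through $\h(\widetilde{D})(1)$, and then $P':={}^{t}\Gamma_{\iota}\circ P\circ\Gamma$ is the required idempotent with $(X,P)\simeq(\widetilde{D},P',1)$. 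Your degree bookkeeping and the formal verification that $P'$ is idempotent with mutually inverse maps $P\circ\Gamma$ and ${}^{t}\Gamma_{\iota}\circ P$ are exactly right.
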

\begin{proof}
  For a proof, see \cite[Theorem 2.1]{VialCK}.
\end{proof}

Let $$P := \Delta_X - \sum (p_{2l}^{alg} + p_{2l+1} + p_{2l+2}^{tr})$$
where the sum is taken over all $l \leq \lfloor \frac{d_X - 3}{2}
\rfloor$.  We are finally in a position to prove the crucial

\begin{proposition} \label{actionChow} For all $l \leq \lfloor
  \frac{d_X - 3}{2} \rfloor$ we have $CH_l(X) = (p_{2l}^{alg} +
  p_{2l+1} + p_{2l+2}^{tr})_*CH_l(X)$.
\end{proposition}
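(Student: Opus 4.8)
The plan is to show that the complementary projector $P$ acts as zero on $CH_l(X)$ for each $l\le \lfloor\frac{d_X-3}{2}\rfloor$, which is equivalent to the asserted decomposition since $\Delta_X = P + \sum(p_{2k}^{alg}+p_{2k+1}+p_{2k+2}^{tr})$ and, by Step~6 together with Proposition~\ref{trivialaction} and the orthogonality arrangements, the idempotents $p_{2k}^{alg},p_{2k+1},p_{2k+2}^{tr}$ for $k\neq l$ all annihilate $CH_l(X)$. Indeed $p_{2k+2}^{tr}$ (equivalently $\pi_{2k+2}^{tr}$, by Proposition~\ref{action1}) factors through $\pi_2^{tr,S}$ preceded by $\Gamma_f\circ h^{k}$, so it lands in the part of $CH_l(X)$ coming from $CH_{l-k-1}(S)$; for $k\neq l$ this forces the action on the $l$-cycles to vanish except for the single relevant summand. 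Similarly $p_{2k}^{alg}$ is, up to isomorphism, a sum of copies of $\mathds{1}(k)$, hence its Chow motive has $CH_j=0$ for $j\neq k$, and $p_{2k+1}$ is a direct summand of $\h_1(C)(k)$, so its Chow groups are concentrated in degree $k$ as well. Thus only the $k=l$ terms can contribute to $CH_l(X)$, and the claim reduces to $CH_l(X,P)=0$.

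Next I would reduce $CH_l(X,P)=0$ to the case $l=0$ by a descending induction on $l$ using Lemma~\ref{induction}. Concretely: once we know $CH_0(X',P'')=0$ for the relevant summand at the bottom, Lemma~\ref{induction} replaces $(X,P)$ by a Tate twist $(Y,P',1)$ of a motive on a variety of one lower dimension, and the statement $CH_l(X,P)=0$ for the top value of $l$ translates into a $CH_0$-vanishing on $Y$. So it suffices to treat the generic case: given an idempotent $P$ (a summand of $Q$, after removing the already-constructed pieces) with $CH_0(X,P)_\alg$ representable and $CH_0(X,P)$ at worst controlled, show $CH_0(X,P)=0$. This is exactly the content of Proposition~\ref{induction2}: it remains to verify its hypotheses. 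Representability of $CH_0(X,P)_\alg$ is inherited from the representability of $CH_l(X,Q)_\alg$ established in Step~5 (combined with Proposition~\ref{action1}), since $P$ is a direct summand of $Q$. The numerical hypotheses --- that $(X,\overline P)$ has no summand isomorphic to $\overline{\mathds{1}}$ nor to a summand of $\overline{\h}_1$ of a curve --- hold by construction: in Step~6 we extracted \emph{all} such summands of $\overline Q$ into the $\overline p_{2k}^{alg}$ and $\overline p_{2k+1}$, so what is left in $P$ is numerically orthogonal to every $\mathds{1}(k)$ and every $\h_1(C)(k)$ in the relevant range.

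The main technical point, and where I expect the real work to lie, is bookkeeping the compatibility between rational and numerical equivalence throughout this inductive descent. Each application of Lemma~\ref{induction} produces a new variety $Y$ and idempotent $P'$, and one must check that $(Y,\overline{P'})$ still contains no copy of $\overline{\mathds{1}}$ or of a summand of $\overline{\h}_1(C)$ --- i.e. that the defining maximality property of the $\overline p_{2k}^{alg},\overline p_{2k+1}$ is preserved under the $(-)(1)$-twist and the change of variety. This is essentially automatic because $\Hom$ groups in the numerical category are unchanged by these operations, but it requires care to phrase as a clean induction, keeping the index $l$ and the dimension drop synchronized. One also has to be mindful that Lemma~\ref{induction} is applied $l$ times starting from the top value $\lfloor\frac{d_X-3}{2}\rfloor$, and at each stage the ``representability of $CH_0{}_\alg$'' hypothesis must be re-certified --- but this too propagates, since representability of $CH_0$ of a summand follows from representability for the ambient motive, which is stable under the twist.

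Putting this together: by the descending induction we obtain $CH_l(X,P)=0$ for all $l\le\lfloor\frac{d_X-3}{2}\rfloor$; combining with the computation of how each $p_{2k}^{alg},p_{2k+1},p_{2k+2}^{tr}$ acts on $CH_l(X)$ (nontrivially only when $k=l$), we conclude $CH_l(X) = (p_{2l}^{alg}+p_{2l+1}+p_{2l+2}^{tr})_*CH_l(X)$ as claimed.
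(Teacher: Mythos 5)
Your proposal is correct and follows essentially the same route as the paper: reduce to showing $P_*CH_l(X)=0$ (since the other idempotents act trivially in the wrong degrees), settle $l=0$ by Proposition~\ref{induction2}, and then iterate Lemma~\ref{induction} together with Proposition~\ref{induction2}, re-verifying representability and the absence of numerical summands of type $\overline{\mathds{1}}$ or $\overline{\h}_1(C)$ at each stage. The bookkeeping you flag is exactly what the paper dismisses as ``an easy induction.''
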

\begin{proof}
  The idempotent $\pi_{2l+2}^{tr}$ acts trivially on $CH_{l'}(X)$ for
  all $l' \neq l$ and so does $p_{2l+2}^{tr}$ by proposition
  \ref{action1} (or more simply because $p_{2l+2}^{tr}$ factors
  through $\pi_{2l+2}^{tr}$ by the formula of lemma \ref{linalg}). By
  construction, the idempotents $ p_{2l}^{alg}$ and $p_{2l+1}$ also
  act trivially on $CH_{l'}(X)$ for all $l' \neq l$.  Therefore it
  suffices to prove that $P_*CH_l(X) = 0$ for $l \leq \lfloor
  \frac{d_X - 3}{2} \rfloor$. The case $l=0$ is proposition
  \ref{induction2}. By proposition \ref{induction}, we get that
  $(X,P)$ is isomorphic to $(Y,P',1)$ for some smooth projective $Y$
  and some idempotent $P' \in CH_{\dim Y}(Y \times Y)$. We can then
  apply proposition \ref{induction2} to $(Y,P')$ and we obtain
  $CH_1(X,P) = 0$. An easy induction concludes the proof.
\end{proof}

Proposition \ref{actionChow} yields that the Chow motive $(X,P)$ has
trivial Chow groups in degrees less than $\lfloor \frac{d_X - 3}{2}
\rfloor$.

It follows from lemma \ref{induction} that there exist a smooth
projective variety $Y$ and an idempotent $q \in CH_{\dim Y}(Y \times
Y)$ such that $(X,P)$ is isomorphic to $(Y,q,\lfloor \frac{d_X -
  1}{2} \rfloor)$. Let $\alpha \in \Hom ((X,P), (Y,q,\lfloor
\frac{d_X - 1}{2} \rfloor))$ denote such an isomorphism and let
$\beta$ be its inverse.  In \cite[\S3]{VialCK}, 
orthogonal idempotents $q_0$ and $q_1 \in \End((Y,q))$ with the
following properties are constructed: \medskip

$\bullet$  $(q_0)_*H_*(Y) = q_*H_0(Y)$ and $(q_1)_*H_*(Y) =
q_*H_1(Y)$.

$\bullet$ The Chow motive $(Y,q_0)$ is isomorphic to a direct sum of
Chow motives of points.

$\bullet$ The Chow motive $(Y,q_1)$ is isomorphic to a direct summand
of the Chow motive of a curve. \medskip

\noindent Let's then define the idempotent $p_{d_X-1}^{alg} := \beta
\circ q_0 \circ \alpha$ if $d_X$ is odd and mutually orthogonal
idempotents $p_{d_X-2}^{alg} := \beta \circ q_0 \circ \alpha$ and
$p_{d_X-1}:= \beta \circ q_1 \circ \alpha$ if $d_X$ is even.  \medskip

By construction the idempotents $p_{2l}^{alg}$ and $p_{2l}^{tr}$ are
mutually orthogonal for all $0 < l < \frac{dim X}{2}$. Let's thus
define the idempotent $$p_{2l} := p_{2l}^{alg} + p_{2l}^{tr}.$$
We also set $p_0 = p_0^{alg}$. We have thus now at our disposal a
set $\{p_l\}_{0 \leq l < d_X}$ of mutually orthogonal idempotents.
Modulo homological equivalence, these define the K\"unneth projectors:

\begin{proposition} \label{Kunneth}
  The mutually orthogonal idempotents $\{p_l\}_{0 \leq l < d_X}$
  satisfy $$(p_l)_*H_*(X) = H_l(X).$$
\end{proposition}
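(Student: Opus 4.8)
The plan is to establish, for each $l$ with $0\leq l<d_X$, the two halves of the identity $(p_l)_*H_*(X)=H_l(X)$: that $(p_l)_*H_j(X)=0$ whenever $j\neq l$, and that $(p_l)_*$ acts as the identity on $H_l(X)$. The first half will be read off from the explicit descriptions of the motives $(X,p_l)$ obtained in step~6; the second half will follow formally once one observes that, as a correspondence in $CH_{d_X}(X\times X)$, the diagonal decomposes as $\Delta_X=\sum_{0\leq l<d_X}p_l+P'$ where $P'$ is an idempotent whose homological realisation is supported in degrees $\geq d_X$.

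For the first half I would go through the constructed pieces. By step~6, $(X,p_{2k}^{alg})\simeq\mathds{1}(k)^{\oplus n}$ and $(X,p_{2k+1})$ is a direct summand of $\h_1(C)(k)$ for some curve $C$; since $\mathds{1}(k)$ has homology only in degree $2k$ and $\h_1(C)(k)$ only in degree $2k+1$, so do these two motives. For the transcendental idempotents, lemma~\ref{dominant} gives $\Gamma_f\circ h^{d_X-d_S}\circ{}^t\Gamma_f=n\cdot\Delta_S$ already in $CH_{d_S}(S\times S)$, hence in homology; writing $(\pi^{tr}_{2k})_*$ as $\frac{1}{n}\,h^{d_X-d_S-k+1}\,f^*\,(\pi_2^{tr,S})_*\,f_*\,h^{k-1}$ and tracking degrees (using that $(\pi_2^{tr,S})_*$ is supported in $H_2(S)$) shows it vanishes on $H_j(X)$ for $j\neq 2k$, and in fact $(X,\pi^{tr}_{2k})\simeq(S,\pi_2^{tr,S})(k-1)$, whose homology sits in degree $2k$. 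By proposition~\ref{GS} the orthonormalised $p^{tr}_{2k}$ is a conjugate of $\pi^{tr}_{2k}$, so it has the same homological class and the same concentration. Finally, by the construction at the end of step~6, $(X,p_{d_X-1}^{alg})\simeq(Y,q_0,m)$ with $m=\lfloor\frac{d_X-1}{2}\rfloor$ and $(Y,q_0)$ a sum of motives of points, so its homology lies in degree $2m=d_X-1$; when $d_X$ is even, likewise $(X,p_{d_X-2}^{alg})\simeq(Y,q_0,m)$ has homology in degree $d_X-2$, and $(X,p_{d_X-1})\simeq(Y,q_1,m)$ with $(Y,q_1)$ of homology concentrated in degree~$1$ has homology in degree $d_X-1$. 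A check on the subscripts then shows that $\{p_l\}_{0\leq l<d_X}$ contains exactly one idempotent of each degree $0,1,\dots,d_X-1$, with $(X,p_l)$ having homology concentrated in degree $l$; this is the first half.

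For the second half, recall from step~6 that $P=\Delta_X-\sum_{l\leq\lfloor(d_X-3)/2\rfloor}(p_{2l}^{alg}+p_{2l+1}+p_{2l+2}^{tr})$ and $(X,P)\simeq(Y,q,m)$. Set $P'=P-p_{d_X-1}^{alg}$ if $d_X$ is odd and $P'=P-p_{d_X-2}^{alg}-p_{d_X-1}$ if $d_X$ is even. Since $p_{d_X-1}^{alg}$ (resp.\ $p_{d_X-2}^{alg}$ and $p_{d_X-1}$) is a sub-idempotent of $P$ corresponding under $(X,P)\simeq(Y,q,m)$ to $q_0$ (resp.\ $q_0$ and $q_1$), the motive $(X,P')$ is isomorphic to $(Y,q-q_0,m)$ (resp.\ $(Y,q-q_0-q_1,m)$), whose homology in degree $j$ is $(q-q_0)_*H_{j-2m}(Y)$ (resp.\ $(q-q_0-q_1)_*H_{j-2m}(Y)$). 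Because $(q_0)_*H_*(Y)=q_*H_0(Y)$ and $(q_1)_*H_*(Y)=q_*H_1(Y)$, this vanishes for all $j<d_X$, so the homology of $(X,P')$ lives in degrees $\geq d_X$. A routine rearrangement of the definitions gives $\Delta_X=\sum_{0\leq l<d_X}p_l+P'$; hence for $j<d_X$ every summand on the right other than $p_j$ annihilates $H_j(X)$, so $(p_j)_*$ is the identity on $H_j(X)$. Combining the two halves yields $(p_l)_*H_*(X)=H_l(X)$.

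The delicate point is the bookkeeping behind the first half: one must keep track of the homological realisation of each idempotent through all the motivic isomorphisms and through the repeated applications of lemma~\ref{linalg}, verifying in particular the isomorphisms $(X,\pi^{tr}_{2k})\simeq(S,\pi_2^{tr,S})(k-1)$ and $(X,p_{d_X-1}^{alg})\simeq(Y,q_0,m)$, as well as the vanishing of the homology of $(X,P')$ in degrees $<d_X$. Once that is in place, the surjectivity of $(p_l)_*$ onto $H_l(X)$ is a formal consequence of the splitting $\Delta_X=\sum_{l<d_X}p_l+P'$ with $P'$ concentrated in high degrees, and requires no computation of Betti numbers.
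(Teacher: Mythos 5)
Your proof is correct and follows essentially the same route as the paper: the containment $(p_l)_*H_*(X)\subseteq H_l(X)$ for weight reasons (each $p_l$ factors through a motive with homology concentrated in degree $l$), and the surjectivity because the complementary idempotent $\Delta_X-\sum_{l<d_X}p_l$ has homology only in degrees $\geq d_X$, by lemma \ref{induction} combined with the defining properties of $q_0$ and $q_1$. The only (cosmetic) difference is that you peel $q_0$ and $q_1$ off the already-established isomorphism $(X,P)\simeq(Y,q,\lfloor\frac{d_X-1}{2}\rfloor)$, whereas the paper applies proposition \ref{actionChow} and lemma \ref{induction} directly to $\Delta_X-\sum_{l'<d_X}p_{l'}$ and then invokes the definitions of $p_{d_X-1}^{alg}$ (and $p_{d_X-2}^{alg}$, $p_{d_X-1}$) for the top degrees.
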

\begin{proof}
  For weight reasons we immediately see that $(p_l)_*H_*(X) \subseteq
  H_l(X)$ for all $l<d_X$. By proposition \ref{actionChow}, we have
  that $CH_l(X,\Delta_X - \sum_{l'<d_X} p_{l'})=0$ for $l \leq \lfloor
  \frac{d_X - 3}{2} \rfloor$. As in the discussion above, lemma
  \ref{induction} then shows that there exists $Y$ and an idempotent
  $q \in CH_{\dim Y}(Y \times Y)$ such that $(X,\Delta_X -
  \sum_{l'<d_X} p_{l'})$ is isomorphic to $(Y,q,\lfloor \frac{d_X -
    1}{2} \rfloor)$. Clearly $H_l(Y,q,\lfloor \frac{d_X - 1}{2}
  \rfloor) = 0$ for $l < 2\lfloor \frac{d_X - 1}{2} \rfloor$ so that
  $(p_l)_*H_*(X) = H_l(X)$ for $l < 2\lfloor \frac{d_X - 1}{2}
  \rfloor$. It then follows from the definitions of $p_{d_X-1}^{alg}$
  when $d_X$ is odd and of $p_{d_X-2}^{alg}$ and $p_{d_X-1}$ when
  $d_X$ is even that $(p_l)_*H_*(X) = H_l(X)$ for the remaining $l$'s
  that is for $2\lfloor \frac{d_X - 1}{2}\rfloor \leq l < d_X$.
\end{proof}

By Poincar\'e duality we then have for $l < d_X$ $$({}^tp_l)_*H_*(X) =
H_{2d_X-l}(X).$$ We are thus led to set for $l > d_X$ $$p_l := {}^t
p_{2d_X - l}.$$ \medskip

\noindent \emph{Step 7. More orthogonality relations.}

\begin{lemma} \label{1way} Let $V$ and $W$ be two smooth projective
  varieties and let $\gamma \in CH^0(V \times W)$ be a correspondence
  such that $\gamma_*$ acts trivially on zero-cycles.  Then $\gamma =
  0$.
\end{lemma}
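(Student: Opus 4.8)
The plan is to reduce the statement to a one-line computation with the class of a closed point. Writing $V = \bigsqcup_i V_i$ and $W = \bigsqcup_j W_j$ as disjoint unions of their connected components, the group $CH^0(V \times W)$ is freely generated over $\Q$ by the fundamental classes $[V_i \times W_j]$ of the irreducible components of $V \times W$. So I would write $\gamma = \sum_{i,j} a_{ij}\,[V_i \times W_j]$ with $a_{ij} \in \Q$ and aim to show that every $a_{ij}$ vanishes; of course, in the application one may as well take $V$ and $W$ connected, in which case $\gamma = a\,[V\times W]$ for a single scalar $a$.

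The next step is to evaluate $\gamma_*$ on the class of a single closed point $p \in V_i$. Since the projection $p_V : V \times W \to V$ is flat, flat pullback gives $p_V^*[p] = \sum_j [\{p\} \times W_j]$. Intersecting with $\gamma$ and using that each $[V_i \times W_j]$ is the fundamental class of an ambient connected component — hence acts as the identity on cycles it carries and annihilates cycles supported on the other components — yields $p_V^*[p] \cdot \gamma = \sum_j a_{ij}\,[\{p\} \times W_j]$. Proper pushforward along $p_W$, which maps $\{p\} \times W_j$ isomorphically onto $W_j$, then gives $\gamma_*[p] = \sum_j a_{ij}\,[W_j]$. As the $[W_j]$ are the fundamental classes of the connected components of $W$, they are linearly independent in $CH^0(W)$, so the hypothesis $\gamma_*[p] = 0$ forces $a_{ij} = 0$ for all $j$; letting $i$ range over all components of $V$ gives $\gamma = 0$.

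I do not expect any genuine obstacle here: once the flat-pullback and proper-pushforward formulas of \cite{Fulton} are in place the argument is pure bookkeeping, needing no moving lemma and no information about rational equivalence beyond the top-dimensional Chow group. The only points requiring minor care are fixing the covariant index conventions for the action $\gamma_*$ and, in the reducible case, tracking which component of $V$ the test point lies on — and even the latter disappears once $V$ is taken connected.
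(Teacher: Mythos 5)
Your argument is correct and is essentially the paper's own proof: the paper reduces at once to $V$, $W$ connected, writes $\gamma = a\,[V\times W]$, and evaluates $\gamma_*$ on a zero-cycle of nonzero degree to force $a=0$, which is exactly your computation with the class of a point (you merely carry the connected components along explicitly instead of reducing to the connected case first). No gap.
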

\begin{proof} We can assume that $V$ and $W$ are both connected.  The
  cycle $\gamma$ is equal to $a\cdot [V \times W]$ for some $a \in
  \Q$. Let $z$ be a zero-cycle on $V$. Then $\gamma_*z = a \cdot \deg
  z \cdot [W]$. This immediately implies $a=0$.
\end{proof}

The following lemma will be used in the proof of lemma \ref{lefrel}.

\begin{lemma} \label{2way} Let $\gamma \in CH^1(V \times W)$ be a
  correspondence such that both $\gamma_*$ and $\gamma^*$ act
  trivially on zero-cycles. Then $\gamma = 0$.
\end{lemma}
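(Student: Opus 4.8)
The plan is to reduce the statement to the classical description of divisor classes on a product. First I would record the elementary computation that, for a closed point $v \in V(\C)$, one has $\gamma_*[v] = i_v^*\gamma \in CH^1(W)$, where $i_v : \{v\} \times W \hookrightarrow V \times W$ denotes the inclusion: indeed $p_V^*[v] = [\{v\}\times W]$ by flat pull-back, $\gamma \cdot [\{v\}\times W] = (i_v)_*(i_v^*\gamma)$ by the projection formula for the regular embedding $i_v$, and $(p_W)_*\circ (i_v)_*$ is the identity under the obvious identification of $\{v\}\times W$ with $W$. Symmetrically $\gamma^*[w] = i_w^*\gamma \in CH^1(V)$ for $w \in W(\C)$, where here $i_w : V \times \{w\} \hookrightarrow V\times W$. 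Thus the two hypotheses say exactly that the divisor class $\gamma$ restricts to $0$ on every ``vertical'' slice $\{v\} \times W$ and on every ``horizontal'' slice $V \times \{w\}$.

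Next I would invoke the decomposition of divisor classes on a product coming from the seesaw principle and the theory of divisorial correspondences: after fixing base points $v_0 \in V$ and $w_0 \in W$, one has (with rational coefficients) $CH^1(V \times W) = p_V^* CH^1(V) \oplus p_W^*CH^1(W) \oplus R$, where $R$ is the subspace of classes vanishing on $V\times\{w_0\}$ and on $\{v_0\}\times W$, and $R \cong \Hom(\mathrm{Alb}(V),\mathrm{Pic}^0(W))\otimes\Q$, the class $\gamma_0 \in R$ corresponding to the homomorphism $\phi_{\gamma_0}$ determined by $\phi_{\gamma_0}(\mathrm{alb}_V(v)) = [i_v^*\gamma_0]$ in $\mathrm{Pic}^0(W)\otimes\Q$. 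Applying this to our $\gamma$: since $i_{w_0}^*\gamma = 0$ and $i_{v_0}^*\gamma = 0$ (the case $v=v_0$, $w=w_0$ of the previous paragraph), the first two components vanish, so $\gamma = \gamma_0 \in R$; and the associated homomorphism $\phi := \phi_\gamma$ satisfies $\phi(\mathrm{alb}_V(v)) = [i_v^*\gamma] = 0$ for every $v \in V(\C)$.

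Finally I would conclude that $\phi = 0$, hence $\gamma = 0$. The classes $\mathrm{alb}_V(v) - \mathrm{alb}_V(v_0)$, as $v$ ranges over $V(\C)$, generate $\mathrm{Alb}(V)$ as a group, so a homomorphism (up to isogeny) of abelian varieties vanishing on all of them must be zero. The only point requiring a little care is the rational-coefficient bookkeeping: ``$[i_v^*\gamma] = 0$ in $\mathrm{Pic}^0(W)\otimes\Q$'' only tells us that an integral representative of $\phi(\mathrm{alb}_V(v))$ is a torsion point; but $\Hom(\mathrm{Alb}(V),\mathrm{Pic}^0(W))$ is a finitely generated torsion-free group, so after clearing denominators one has a genuine homomorphism $\psi$ of abelian varieties whose image is an abelian subvariety of $\mathrm{Pic}^0(W)$ containing a generating (hence Zariski-dense) set of torsion points, which forces $\psi = 0$ and therefore $\phi = 0$. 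This last paragraph is the only real — and quite minor — obstacle; everything else is formal. I would also note that both hypotheses are genuinely needed: $\gamma^*=0$ is used only to get $i_{w_0}^*\gamma = 0$, whereas $\gamma_* = 0$ supplies both $i_{v_0}^*\gamma = 0$ and the vanishing of $\phi$.
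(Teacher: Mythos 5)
Your proof is correct in substance and in fact takes a more careful route than the paper's own argument, which disposes of the lemma by asserting $\mathrm{Pic}(V\times W)=p_V^*\mathrm{Pic}(V)\oplus p_W^*\mathrm{Pic}(W)$ and reading off the two components from $\gamma_*$ and $\gamma^*$ on points. That two-term decomposition is false in general (for $V=W=E$ an elliptic curve the diagonal class is not of that form); the correct statement is the one you use, with the third summand $R\cong\Hom(\mathrm{Alb}(V),\mathrm{Pic}^0(W))\otimes\Q$ of divisorial correspondences. Your argument supplies exactly the step the short proof glosses over: the hypothesis that $\gamma_*$ kills zero-cycles eliminates not only the $p_W^*CH^1(W)$-component but also the $R$-component, via the vanishing of the associated homomorphism of abelian varieties. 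So your route is the one that actually establishes the lemma in the stated generality, at the cost of invoking the see-saw decomposition and a little abelian-variety theory.

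One repair is needed in your final paragraph. After clearing denominators, knowing that the image of $\psi$ is an abelian subvariety ``containing a Zariski-dense set of torsion points'' does not force $\psi=0$: torsion points are Zariski dense in \emph{every} abelian variety, so this inference, as written, proves nothing. What does work is the observation sitting one step earlier in your own setup: the set $\{\psi(\mathrm{alb}_V(v)-\mathrm{alb}_V(v_0)) : v\in V(\C)\}$ is the image of the irreducible variety $V$ under a morphism of varieties, hence an irreducible constructible set; being contained in the countable set of torsion points, it must be a single point, namely $\psi(0)=0$ (take $v=v_0$). Since these points generate $\mathrm{Alb}(V)$, it follows that $\psi=0$ and hence $\gamma=0$. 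With that sentence replaced, your proof is complete.
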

\begin{proof} We can assume $V$ and $W$ are connected. We have
  $\mathrm{Pic}(V \times W) = \mathrm{Pic}(V) \times [W] \oplus [V]
  \times \mathrm{Pic}( W)$.  The cycle $\gamma$ is thus equal to $D_1
  \times [W] \oplus [V] \times D_2$ for some divisors $D_1 \in
  CH^1(V)$ and $D_2 \in CH^1(W)$. Let $z$ be a zero-cycle on $V$.
  Then $\gamma_*z = \deg z \cdot D_2$.  This immediately implies
  $D_2=0$. Likewise, if $z \in CH_0(W)$, $\gamma^* z =0$ implies
  $D_1=0$. We have thus proved that $\gamma=0$.
\end{proof}

\begin{proposition} \label{vanishing} Let $C$ and $C'$ be smooth
  projective curves and let $S$ be a smooth projective surface
  together with an idempotent $\pi_2^{tr,S}$ as in Step 1. Then
  \medskip

 $\bullet$ $\Hom (\h_1(C)(l),\h_1(C'))= 0$ for $l>0$,

 $\bullet$ $\Hom ((S,\pi_2^{tr,S},l),\h_1(C))=0$ for $l>0$.
\end{proposition}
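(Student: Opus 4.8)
The idea is to reduce both vanishing statements to the two elementary lemmas \ref{1way} and \ref{2way} by computing the relevant $\Hom$ groups of Chow motives directly as Chow groups of products of curves (and a surface), and then to exhibit that any such correspondence acts trivially on zero-cycles. Recall that in the covariant convention a morphism $(V,p,n) \to (W,q,m)$ is a correspondence in $q \circ CH_{\dim V + n - m}(V \times W) \circ p$; so $\Hom(\h_1(C)(l),\h_1(C')) = \pi_1^{C'} \circ CH^{0}(C \times C')\big((\text{shift by }l)\big) \circ \pi_1^{C}$ sits inside $CH_{1 + l - 0}(C \times C') = CH^{-l}(C \times C')$ wait — more carefully, $\dim(\h_1(C)(l)) $ contributes $1$ to the dimension and the Tate twist by $l$ shifts the degree, so the underlying group is $CH_{1+l}(C \times C')$, i.e. $CH^{1-l}(C \times C')$. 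For $l > 0$ this has codimension $\leq 0$, hence is $CH^0$ if $l=1$ and zero if $l > 1$ (a product of two curves has dimension $2$, so negative-codimension cycles vanish). Thus only the case $l = 1$ requires an argument.

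\textbf{First bullet.} For $l=1$ I would take a correspondence $\gamma \in \Hom(\h_1(C)(1),\h_1(C'))$; it lies in $CH^0(C \times C')$ and factors as $\gamma = \pi_1^{C'} \circ \gamma \circ \pi_1^{C}$. The key point is that $\pi_1^C$ kills zero-cycles (since $(\pi_1^C)_* CH_0(C) = CH_0(C,\pi_1^C) = 0$, as $\h_1(C)$ has no Chow group in degree $0$), so $\gamma_*$ acts trivially on $CH_0(C)$. By lemma \ref{1way}, $\gamma = 0$. This handles $l=1$, and combined with the dimension count above, the first bullet follows for all $l > 0$.

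\textbf{Second bullet.} Here $\Hom((S,\pi_2^{tr,S},l),\h_1(C))$ sits inside $\pi_1^C \circ CH_{2+l}(S \times C) \circ \pi_2^{tr,S}$, and $CH_{2+l}(S \times C) = CH^{1-l}(S \times C)$ (since $\dim(S \times C) = 3$). For $l > 1$ this is a negative-codimension Chow group of a $3$-fold, hence zero. For $l = 1$ it lies in $CH^0(S \times C)$; a class there is a multiple of $[S \times C]$, and precomposing with $\pi_2^{tr,S}$ — which acts trivially on $CH_2(S)$ and on $CH_1(S)$ — together with postcomposing with $\pi_1^C$ forces it to vanish (one can either argue directly that $(\pi_2^{tr,S})_* CH_2(S) = 0$ so the composite kills the fundamental class of $S$, or invoke lemma \ref{1way} after checking the action on zero-cycles is trivial). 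It is the borderline case $l = 1$ that carries the content; everything else is the dimension bound $CH^{<0} = 0$. The analogous statement for the piece coming from $\pi_0$ or $\pi_1$ of the motive is subsumed here. I expect no serious obstacle: the only thing to be careful about is bookkeeping the Tate-twist shift in the covariant convention so that the target Chow group is correctly identified, and then the vanishing is immediate from lemmas \ref{1way} and \ref{2way} together with the defining properties of $\pi_2^{tr,S}$ recorded in Step 1 and the fact that $\pi_1^C$ annihilates zero-cycles.
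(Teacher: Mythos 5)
Your overall route is the paper's: the $\Hom$-group in question sits inside $CH^{1-l}$ of a product of dimension $2$ (resp.\ $3$), so it vanishes for $l>1$ for dimension reasons, and for $l=1$ one is reduced to showing that a class in $CH^0$ of the product acts trivially on zero-cycles and then invoking lemma \ref{1way}. Your degree bookkeeping, after the hesitation, lands on the correct groups.

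However, the justification you give for the key step in the first bullet is false as stated. You claim that $(\pi_1^C)_*CH_0(C)=CH_0(C,\pi_1^C)=0$ because ``$\h_1(C)$ has no Chow group in degree $0$''. In fact $CH_0(\h_1(C))\cong \mathrm{Pic}^0(C)\otimes\Q$, which is nonzero as soon as $C$ has positive genus; in the covariant convention $CH_0(\h_1(C))$ is precisely the degree-zero part of $CH_0(C)$, i.e.\ the Jacobian. So $\pi_1^C$ does not kill zero-cycles. The correct (one-line) argument is: $(\pi_1^C)_*$ sends $CH_0(C)$ into the subgroup of zero-cycles of degree $0$, and a correspondence $\gamma=a\cdot[C\times C']\in CH^0(C\times C')$ satisfies $\gamma_*w=a\deg(w)[C']$, hence annihilates degree-zero cycles; equivalently, one may post-compose and use that $(\pi_1^{C'})_*$ annihilates $CH_1(C')=\Q\cdot[C']$. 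With this repair, $\gamma=\pi_1^{C'}\circ\gamma\circ\pi_1^C$ acts trivially on $CH_0(C)$ and lemma \ref{1way} applies, exactly as in the paper. The same correction is needed in your second bullet: the relevant input is that $(\pi_2^{tr,S})_*CH_0(S)=\ker(\mathrm{alb}_S)$ consists of degree-zero cycles (or, dually, that $(\pi_1^{C})_*[C]=0$); your parenthetical ``$(\pi_2^{tr,S})_*CH_2(S)=0$'' concerns the fundamental class rather than zero-cycles and does not by itself verify the hypothesis of lemma \ref{1way}.
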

\begin{proof}
  The result is trivial for dimension reasons if $l>1$. Let's thus
  consider the case $l=1$. If $\gamma$ is a morphism that belongs
  to $\Hom (\h_1(C)(1),\h_1(C'))$ (resp. $\Hom
  ((S,\pi_2^{tr,S},1),\h_1(C))$), then $\gamma$ is an element of
  $CH^0(C \times C')$ (resp. $CH^0(S \times C)$) such that $\gamma_*$
  acts trivially on zero-cycles. By lemma \ref{1way} we get $\gamma =
  0$.
\end{proof}

\begin{proposition} \label{semiorthogonality2} Let $\{p_i : i \neq
  d_X\}$ be the idempotents constructed in Step 6. For all $0 \leq i,j
  < d_X$ we have the following relations.  \medskip

$\bullet$ $p_{i} \circ p_j = 0$ for $i \neq j$,

$\bullet$ $p_{i} \circ {}^tp_j = 0$.
\end{proposition}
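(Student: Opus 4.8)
The plan is to exploit the fact that each idempotent $p_i$ with $i \neq d_X$ was built as a direct summand of a motive of a very restricted type, and that morphisms between motives of different weights (after twisting) are forced to vanish by the dimension/weight vanishing lemmas already established. First I would record, for each $i$ with $0 \le i < d_X$, exactly which motive $(X,p_i)$ is a summand of: for $i = 2l \le d_X-3$ we have $p_{2l} = p_{2l}^{alg}+p_{2l}^{tr}$ with $(X,p_{2l}^{alg}) \simeq \mathds{1}(l)^{\oplus n}$ and $(X,p_{2l}^{tr}) \simeq (X,\pi_{2l}^{tr})$, which by Step 2 is a direct summand of $(S,\pi_2^{tr,S},l-1)$ (twisted appropriately); for $i = 2l+1 \le d_X-3$ we have $(X,p_{2l+1})$ a direct summand of $\h_1(C)(l)$ for some curve $C$; and for the top indices $p_{d_X-1}^{alg}$ (or $p_{d_X-2}^{alg}$ and $p_{d_X-1}$) the motives are, by the definitions via $q_0$ and $q_1$ from \cite{VialCK}, a sum of motives of points, resp. a direct summand of the motive of a curve, sitting in weight $d_X-1$ (resp.\ $d_X-2$). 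So each $(X,p_i)$ is a direct summand of $\mathds{1}(a_i)^{\oplus \star}$ or of $\h_1(C_i)(b_i)$ or of $(S,\pi_2^{tr,S},c_i)$ with the twist determined by $i$ (roughly $2a_i = i$ or $2b_i+1 = i$ or $2c_i+2 = i$).

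Next, for the first bullet, $p_i \circ p_j = 0$ for $i \neq j$ with $0 \le i,j < d_X$: write $p_i \circ p_j$ as the composite of a morphism $(X,p_j) \to (X,p_i)$, which factors as a morphism $M_j \to M_i$ between motives of the above type, where $M_j, M_i$ are pure of weights $j$ and $i$ respectively. A $\Q$-linear morphism of Chow motives of distinct weights between summands of motives of smooth projective varieties of dimension $\le 2$ is forced to vanish: this is trivial for dimension reasons if $|i-j| \ge 3$, and for $|i-j| \in \{1,2\}$ it is exactly the content of Lemma \ref{1way}, Lemma \ref{2way} and Proposition \ref{vanishing} (for instance $\Hom(\h_1(C)(l),\h_1(C'))=0$ for $l>0$, $\Hom((S,\pi_2^{tr,S},l),\h_1(C)) = 0$ for $l>0$, and the analogous statements involving $\mathds{1}(l)$). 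Many of these vanishings are in fact already guaranteed by mutual orthogonality established during the construction in Step 6 (the family $\{p_{2l}^{alg}, p_{2l+1}, p_{2l+2}^{tr}\}$ was built orthogonal), so the only genuinely new cases are those mixing a ``low'' index with a ``top'' index $d_X-1$ or $d_X-2$, and those are dispatched by Proposition \ref{vanishing} together with Lemma \ref{1way}.

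For the second bullet, $p_i \circ {}^tp_j = 0$ for $0 \le i,j < d_X$, I would argue similarly: ${}^tp_j$ is, up to the standard duality $(X,q,m)^\vee \simeq (X,{}^tq,d_X-m)$, a morphism whose source is the dual of $(X,p_j)$, hence a summand of $\mathds{1}(d_X - a_j)$ or $\h_1(C_j)(d_X-1-b_j)$ or $(S,{}^t\pi_2^{tr,S}, d_X - c_j)$, which sits in weight $2d_X - j > d_X > i$. Thus $p_i \circ {}^tp_j$ is a morphism from a motive of weight $2d_X-j$ to one of weight $i$, and $2d_X - j > i$ always, so the composite is a morphism $M' \to M$ with $\mathrm{wt}(M') > \mathrm{wt}(M)$; again this vanishes by the dimension count when the weight gap is $\ge 3$ and by Lemmas \ref{1way}, \ref{2way}, Proposition \ref{vanishing} when it is $1$ or $2$. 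The main obstacle I anticipate is purely bookkeeping: one must be careful that the ``twist'' attached to each $p_i$ really is the one I claimed (so that the weights come out to $i$ and $2d_X-j$ on the nose, never coincidentally equal), and that the vanishing statements quoted — which are phrased for $\Hom$ between specific motives — cover every pair $(i,j)$, including the subtle ones where one of the two motives is the $p^{tr}$-type summand of $(S,\pi_2^{tr,S},\cdot)$ paired against a curve or a point; but all of these are exactly the cases handled by Proposition \ref{vanishing}, so no new vanishing input is needed.
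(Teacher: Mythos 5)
Your handling of the second bullet is essentially the paper's own argument: write $p_i\circ{}^tp_j$ as a morphism whose source is the dual motive (twisted by roughly $d_X$ minus half of $j$) and whose target is twisted by roughly half of $i$, note that the resulting twist is always strictly positive because $i+j<2d_X$, and then kill the correspondence either for dimension reasons or by Proposition \ref{vanishing} together with Lemmas \ref{1way} and \ref{2way}. That part is correct modulo the bookkeeping you acknowledge.

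The first bullet, however, contains a genuine gap. The principle you invoke --- that a morphism of Chow motives of distinct weights between summands of motives of varieties of dimension $\leq 2$ must vanish --- is false: for instance $\Hom(\mathds{1},\h_1(C))=(\pi_1^C)_*CH_0(C)\neq 0$ for a curve of positive genus, and $\Hom(\h_1(C),\h_1(C')(1))=\pi_1^{C'}\circ CH_0(C\times C')\circ\pi_1^C$ contains the Albanese kernel of the surface $C\times C'$, which is huge by Mumford. Every vanishing statement actually available in the paper (Proposition \ref{vanishing}, Lemmas \ref{1way} and \ref{2way}) requires the twist to be \emph{positive}, i.e.\ it only kills the composition going from the more highly twisted motive to the less twisted one. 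For a pair $i<j<d_X$ exactly one of $p_i\circ p_j$ and $p_j\circ p_i$ has the favourable sign; the other one --- for example $p_{d_X-1}\circ p_{2l+1}$, which factors through $\Hom\bigl(\h_1(C_l)(l),\h_1(C)(\frac{d_X-2}{2})\bigr)$ with $l<\frac{d_X-2}{2}$, a group of the form $\pi_1\circ CH_{\leq 0}(C_l\times C)\circ\pi_1$ --- is not covered by any of the quoted lemmas, so your claim that the ``low versus top'' cases are dispatched by Proposition \ref{vanishing} and Lemma \ref{1way} does not go through. The paper's argument for the first bullet is simply that \emph{all} these products vanish by construction: the $p^{tr}_{2i}$ are made mutually orthogonal in Step 4; the $p^{alg}_{2l}$ and $p_{2l+1}$ are lifted one after another so as to be orthogonal to each other and to $\Delta_X-Q$; and the top idempotents satisfy $p=P\circ p\circ P$ with $P=\Delta_X-\sum(p_{2l}^{alg}+p_{2l+1}+p_{2l+2}^{tr})$ orthogonal on both sides to everything previously constructed. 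No Hom-vanishing is needed, or indeed available, for that bullet.
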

\begin{proof} The first point is clear by construction of the $p_i$'s
  for $0 \leq i < d_X$. Concerning the second point, we already know
  from Step 4 that $p_{2i}^{tr} \circ {}^tp_{2j}^{tr} = 0$. Here is
  what is left to prove. \medskip

  $\bullet$ $p_{2i}^{alg} \circ {}^tp_{j} = 0$ for $0 \leq 2i,j < \dim
  X$. This follows immediately for dimension reasons and from the fact
  that $p_{2i}^{alg} $ factors through a zero-dimensional variety.

  $\bullet$ $p_{2i+1} \circ {}^tp_{2j+1} = 0$ for $0 \leq 2i+1,2j+1 <
  d_X$. The correspondence $p_{2i+1} \circ {}^tp_{2j+1}$ factors
  through a correspondence $\gamma \in \Hom (\h_1(C_i)(d_X
  -2i-1),\h_1(C_i))$ for some curve $C_i$. By proposition
  \ref{vanishing}, the group $\Hom (\h_1(C_i)(\dim X
  -i-j-1),\h_1(C_i))$ is zero for $d_X -i- j -1 > 0$ and hence
  $p_{2i+1} \circ {}^tp_{2j+1} = 0$.

  $\bullet$ $p_{2i+1} \circ {}^tp_{2j}^{tr} = 0$ for $0 \leq 2i+1,2j <
  d_X$. The correspondence $p_{2i+1} \circ {}^tp_{2j}^{tr}$ factors
  through a correspondence $\gamma \in \Hom ((S,\pi_2^{tr,S},d_X
  -i-j-1),\h_1(C_i))$ for some curve $C_i$. By proposition
  \ref{vanishing}, the group $\Hom ((S,\pi_2^{tr,S},d_X
  -i-j-1),\h_1(C_i))$ is zero for $d_X - i - j - 1 > 0$ and hence
  $p_{2i+1} \circ {}^tp_{2j}^{tr} = 0$.
\end{proof}

 \noindent \emph{Step 8.  Orthonormalising the $p_i$'s.} By
 proposition \ref{semiorthogonality2}, the set of idempotents $\{p_l :
 l \neq d_X\}$ is such that $p_l \circ p_{l'} = 0$ for $l<l'$ and
 $l,l' \neq d_X$. Therefore, we can apply proposition \ref{GS} to get
 a new set of mutually orthogonal idempotents, that we denote $\{\Pi_l
 : l \neq d_X\}$. We then set $$\Pi_{d_X} := \Delta_X - \sum_{l \neq
   d_X} \Pi_l.$$

\noindent \emph{Step 9. The $\Pi_i$'s define a self-dual
  Chow-K\"unneth decomposition for $X$.} We are now in a position to
state the following.

\begin{proposition} \label{CK}
  The set $\{\Pi_l : 0 \leq l \leq 2d_X\}$ defines a Chow-K\"unneth
  decomposition for $X$ that enjoys the following properties:
  \medskip

  $\bullet$ Self-duality, i.e. $\Pi_l = {}^t\Pi_{2d_X-l}$ for all $l$.

  $\bullet$ $(X,\Pi_{2l},-l+1)$ is isomorphic to a direct summand of
  the motive of a surface for $2l \neq d_X$.

  $\bullet$ $(X,\Pi_{2l+1},-l)$ is isomorphic to a direct summand of
  the motive of a curve for $2l+1 \neq d_X$.
\end{proposition}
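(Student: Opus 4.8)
The plan is to verify, in turn, that $\{\Pi_l\}$ is a Chow--K\"unneth decomposition, that it is self-dual, and that the two structure statements hold, each time transporting the assertion from the corresponding known fact about the semi-orthogonal family $\{p_l : l \neq d_X\}$ of Step~8 along the Gram--Schmidt isomorphisms $(X,p_l) \simeq (X,\Pi_l)$ furnished by Proposition~\ref{GS}. The first point, that $\{\Pi_l : 0 \leq l \leq 2d_X\}$ consists of mutually orthogonal idempotents adding to $\Delta_X$, is immediate: for $l \neq d_X$ this is exactly the output of Proposition~\ref{GS} applied to the semi-orthogonal family of Proposition~\ref{semiorthogonality2}, and $\Pi_{d_X} = \Delta_X - \sum_{l \neq d_X} \Pi_l$ is then automatically an idempotent orthogonal to all the others, the whole family obviously summing to $\Delta_X$.

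For the K\"unneth property I would argue as follows. Each isomorphism $(X,p_l) \simeq (X,\Pi_l)$ is realised by correspondences in $CH_{d_X}(X \times X)$, namely $\Pi_l \circ p_l$ and $p_l \circ \Pi_l$, so it induces a grading-preserving isomorphism $(\Pi_l)_* H_*(X) \cong (p_l)_* H_*(X)$. By Proposition~\ref{Kunneth} together with the Poincar\'e-duality remark following it, $(p_l)_* H_*(X) = H_l(X)$ for every $l \neq d_X$; hence the subspace $(\Pi_l)_* H_*(X) \subseteq H_*(X)$ is concentrated in degree $l$ and has dimension $\dim H_l(X)$, so it equals $H_l(X)$. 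Since any correspondence in $CH_{d_X}(X \times X)$ acts on $H_*(X)$ preserving homological degree, $(\Pi_l)_*$ is then the identity on $H_l(X)$ and zero on $H_j(X)$ for $j \neq l$; therefore $(\Pi_{d_X})_* = \mathrm{id} - \sum_{l \neq d_X}(\Pi_l)_*$ annihilates $\bigoplus_{j \neq d_X} H_j(X)$, so $(\Pi_{d_X})_* H_*(X) \subseteq H_{d_X}(X)$, and comparing with $H_*(X) = \bigoplus_l (\Pi_l)_* H_*(X)$ forces $(\Pi_{d_X})_* H_*(X) = H_{d_X}(X)$.

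For self-duality I would first record that $p_l = {}^tp_{2d_X - l}$ for all $l \neq d_X$ (the definition for $l > d_X$, and a consequence of transposition being an involution for $l < d_X$), and then check that the Gram--Schmidt process of Lemma~\ref{linalg} respects this symmetry: transposing the explicit product defining its output attached to $l$ and substituting ${}^tp_m = p_{2d_X - m}$ reproduces, after re-indexing via the order-reversing involution $m \mapsto 2d_X - m$ of $\{0,\ldots,2d_X\} \setminus \{d_X\}$, exactly the product defining the output attached to $2d_X - l$. Running the same finite number of iterations on both sides (the number demanded in Proposition~\ref{GS}) then yields ${}^t\Pi_l = \Pi_{2d_X - l}$ for $l \neq d_X$, and transposing $\Pi_{d_X} = \Delta_X - \sum_{l \neq d_X} \Pi_l$ gives ${}^t\Pi_{d_X} = \Pi_{d_X}$.

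Finally, for the two structure statements I would use $(X,\Pi_m) \simeq (X,p_m)$ (hence $(X,\Pi_m,n) \simeq (X,p_m,n)$ for any $n$) and unwind Step~6. For $2l < d_X$ one has $p_{2l} = p_{2l}^{alg} + p_{2l}^{tr}$ with $(X,p_{2l}^{alg}) \simeq \mathds{1}(l)^{\oplus n}$ (by construction for small $l$, or via $q_0$ and $(X,P) \simeq (Y,q,\lfloor \frac{d_X-1}{2}\rfloor)$ for $2l$ close to $d_X$), while a routine retract argument applied to the composition defining $\pi_{2l}^{tr}$ identifies $(X,p_{2l}^{tr}) \simeq (X,\pi_{2l}^{tr})$ with a direct summand of $(S,\pi_2^{tr,S},l-1)$, the twist being forced by the homological degree; twisting by $-l+1$ turns the first summand into $\mathds{1}(1)^{\oplus n}$, a direct summand of the motive of a (possibly reducible) surface, and the second into a direct summand of $(S,\pi_2^{tr,S}) \subseteq \h(S)$, so $(X,\Pi_{2l},-l+1)$ is a direct summand of the motive of a surface. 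Likewise, for $2l+1 < d_X$, $(X,p_{2l+1})$ is a direct summand of $\h_1(C)(l)$ for a curve $C$ (by construction, or via $q_1$), so $(X,\Pi_{2l+1},-l)$ is a direct summand of $\h_1(C)(l)(-l) = \h_1(C) \subseteq \h(C)$. The remaining indices $m > d_X$ follow from the cases $m < d_X$ by dualising, using $\Pi_m = {}^t\Pi_{2d_X - m}$ together with the behaviour of transposition under duality of Chow motives (whereby $(X,e,n)^\vee \simeq (X,{}^te,-n-d_X)$ and $\h(W)^\vee \simeq \h(W)(-\dim W)$): a short Tate-twist computation shows that the prescribed twist of $\Pi_m$ is a direct summand of the dual of a surface- (resp. curve-) motive --- the shift $(-\dim W)$ in $\h(W)^\vee$ exactly cancelling the discrepancy --- which is again such a motive. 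I expect the principal obstacle to be precisely this last layer of bookkeeping: tracking every Tate twist through the construction of Step~6 and through transposition, and confirming that the Step~8 orthonormalisation can be carried out compatibly with the involution $l \leftrightarrow 2d_X - l$, so that self-duality is preserved.
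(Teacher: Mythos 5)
Your proposal is correct and follows essentially the same route as the paper's own (much terser) proof: transporting the K\"unneth, self-duality and structure properties from the semi-orthogonal family $\{p_l\}$ to $\{\Pi_l\}$ via the isomorphisms of Proposition~\ref{GS}, the symmetry $p_l={}^tp_{2d_X-l}$ combined with the explicit Gram--Schmidt formula of Lemma~\ref{linalg}, and the constructions of Steps~4 and~6. The extra detail you supply (the degree argument for $\Pi_{d_X}$, the order-reversing involution in the Gram--Schmidt product, and the Tate-twist bookkeeping under duality for $l>d_X$) is all accurate and merely makes explicit what the paper leaves to the reader.
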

\begin{proof}
  It is easy to see from the fact that $p_l = {}^tp_{2d_X -l}$ for
  all $l \neq d_X$ and from the formula of lemma \ref{linalg} that
  $\Pi_l = {}^t\Pi_{2d_X-l}$ for all $l$. That the decomposition
  $\{\Pi_l : 0 \leq l \leq 2d_X\}$ does indeed induce a K\"unneth
  decomposition, i.e. that $(\Pi_l)_* H_*(X) = H_l(X)$, follows for $l
  \neq d_X$ from the isomorphisms $(X,p_l) \simeq (X,\Pi_l)$ of
  proposition \ref{GS} and from proposition \ref{Kunneth}. It is
  then obvious that $(\Pi_{d_X})_*H_*(X) = H_{d_X}(X)$.

  Concerning the last two points, this follows again from the fact
  that $(X,\Pi_l)$ is isomorphic to $(X,p_l)$ for all $l\neq d_X$ by
  proposition \ref{GS}, and from the construction of $p_l$ carried out
  in Steps 4 and 6.
\end{proof} \medskip

\noindent \emph{Step 10. On the middle idempotent $\Pi_{d_X}$.} Here
we characterise the support of the idempotent $\Pi_{d_X}$. It is an
essential step towards proving Murre's conjectures for $X$.  Let's
start by showing that the $\Pi_i$'s act the same way as the $p_i$'s on
Chow groups for $i \neq d_X$, i.e. we show that the action on Chow
groups is not altered by the non-commutative Gram-Schmidt process. For
this purpose we need the following.

\begin{proposition} \label{trivialaction2}
  The correspondence ${}^tp_j \circ p_i$ acts trivially on $CH_*(X)$
  for all $i,j < d_X$.
\end{proposition}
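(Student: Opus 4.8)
The plan is to compare, for each degree $l$, the degrees in which the Chow groups of the source and the target of the morphism of Chow motives ${}^tp_j\circ p_i\colon (X,p_i)\to(X,{}^tp_j)$ are supported. Throughout put $e:=\lfloor\frac{d_X-1}{2}\rfloor$.

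First I would record the shape of $(X,p_i)$ for $i<d_X$. By Proposition \ref{GS} this motive is isomorphic, as a Chow motive, to one of the motives $(X,\pi^{tr}_{2l})$, $(X,p^{alg}_{2l})$, $(X,p_{2l+1})$ constructed in Steps 2 and 6, hence to a direct summand of $\h(Z_i)(m_i)$, where $Z_i$ is a point, a curve taken through its $\h_1$, or the surface $S$ taken through $\pi^{tr,S}_2$, and $2m_i+\dim Z_i\le d_X-1$. Since the Chow groups of $\h_1$ of a curve and of $(S,\pi^{tr,S}_2)$ live only in degree $0$, the Chow groups of $(X,p_i)$ live in a single degree — in the two degrees $l-1$ and $l$ in the case $p_{2l}=p^{alg}_{2l}+p^{tr}_{2l}$ — and that degree is always $\le e$, and it equals $e$ only when $i=d_X-1$; all of this can be read off directly from the constructions of Steps 2 and 6, exactly as in the proof of Proposition \ref{actionChow}. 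Passing to transposes, and using that $(X,p)\simeq(Z,\epsilon,n)$ forces $(X,{}^tp)\simeq(Z,{}^t\epsilon,d_X-n-\dim Z)$, the Chow groups of $(X,{}^tp_j)$, $j<d_X$, again lie in a single degree, which is now $\ge e+1$ in all cases, with the sole exception that when $d_X$ is odd the summand ${}^tp^{tr}_{d_X-1}$ of ${}^tp_{d_X-1}$ has its Chow groups exactly in degree $e$.

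Because the endomorphism $({}^tp_j\circ p_i)_*$ of $CH_l(X)$ factors through $CH_l(X,p_i)\to CH_l(X,{}^tp_j)$, the two preceding observations already give $({}^tp_j\circ p_i)_*=0$ on $CH_l(X)$ for every $l$, except possibly when $l=e$, $d_X$ is odd and $i=j=d_X-1$. In that remaining case I would expand $p_{d_X-1}=p^{alg}_{d_X-1}+p^{tr}_{d_X-1}$ and ${}^tp_{d_X-1}={}^tp^{alg}_{d_X-1}+{}^tp^{tr}_{d_X-1}$; of the four resulting composites, three act trivially on $CH_e(X)$ by the same degree count (their source and target Chow groups are supported in distinct degrees), and only ${}^tp^{tr}_{d_X-1}\circ p^{alg}_{d_X-1}$ is not disposed of that way. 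To kill it I would use that $p^{tr}_{d_X-1}$ belongs to the mutually orthogonal family $\{p^{tr}_{2i}:2i\ne d_X\}$ whose sum is $\Delta_X-Q$, that this family is stable under transposition — which follows from the relation $\pi^{tr}_{2d_X-2i}={}^t\pi^{tr}_{2i}$ of Step 2 together with the compatibility of the non-commutative Gram–Schmidt process of Lemma \ref{linalg} with transposition — so that ${}^tp^{tr}_{d_X-1}=p^{tr}_{d_X+1}$ is again a member of that family; since $p^{alg}_{d_X-1}$ is, by its construction, orthogonal to $\Delta_X-Q$, it is orthogonal to every member of the family, whence ${}^tp^{tr}_{d_X-1}\circ p^{alg}_{d_X-1}=p^{tr}_{d_X+1}\circ p^{alg}_{d_X-1}=0$.

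The one delicate point is exactly this exceptional situation ($d_X$ odd, $i=j=d_X-1$): there the Chow groups of $p^{alg}_{d_X-1}$ (the algebraic summand sitting in cohomological degree $d_X-1$) and of ${}^tp^{tr}_{d_X-1}$ (the transcendental summand sitting in degree $d_X+1$) both reach $CH_e(X)$, so the bookkeeping of degrees is insufficient and one must invoke the orthogonality built into the construction of the idempotents; everywhere else the statement is just a matter of noting that the relevant Chow groups are concentrated in incompatible degrees.
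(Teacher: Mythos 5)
Your degree bookkeeping is sound and is essentially a repackaging of the paper's own argument (which phrases the same counts as ``the composite factors through a $\Hom$-group that vanishes or acts trivially for dimension reasons''). Two small remarks on the bookkeeping itself: for $d_X$ even the Chow groups of $(X,p_{d_X-2}^{alg})$ also sit in degree $e$, so ``equals $e$ only when $i=d_X-1$'' is not quite right, but this is harmless since for $d_X$ even every target $(X,{}^tp_j)$ sits in degree $\geq e+1$; and your identification of ${}^tp^{tr}_{d_X-1}\circ p^{alg}_{d_X-1}$ acting on $CH_e(X)$, $d_X$ odd, as the one case not killed by degree reasons is correct --- indeed more careful than the paper, whose proof treats ${}^tp^{alg}_{d_X-1}\circ p^{alg}_{d_X-1}$ and ${}^tp^{alg}_{d_X-1}\circ p^{tr}_{d_X-1}$ but never addresses this fourth term. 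The identity ${}^tp^{tr}_{2i}=p^{tr}_{2d_X-2i}$ is also fine, since the Gram--Schmidt formula of lemma \ref{linalg} is compatible with transposition once one uses $\pi^{tr}_{2d_X-2i}={}^t\pi^{tr}_{2i}$ and the order reversal $i\mapsto d_X-i$.

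The gap is in how you dispose of that last term. You assert that $p^{alg}_{d_X-1}$ is ``by its construction orthogonal to $\Delta_X-Q$'', hence to every member of the family $\{p^{tr}_{2m}:2m\neq d_X\}$, in particular to $p^{tr}_{d_X+1}={}^tp^{tr}_{d_X-1}$. But that is not what the construction gives. Only the idempotents $p^{alg}_{2l},p_{2l+1}$ with $l\leq\lfloor\frac{d_X-3}{2}\rfloor$ are built orthogonal to $\Delta_X-Q$; the idempotent $p^{alg}_{d_X-1}$ is built later, as $\beta\circ q_0\circ\alpha$ through the isomorphism $(X,P)\simeq(Y,q,\lfloor\frac{d_X-1}{2}\rfloor)$, and is therefore only orthogonal to $\Delta_X-P$. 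Now $\Delta_X-P$ omits precisely the idempotents $p^{tr}_{2m}$ with $2m>d_X$: one checks that $P\circ p^{tr}_{2m}=p^{tr}_{2m}\circ P=p^{tr}_{2m}$ for $2m>d_X$, so these are \emph{sub}-idempotents of $P$, and there is no built-in orthogonality between them and $p^{alg}_{d_X-1}$, which lives inside the same motive $(X,P)$. Concretely, the obstruction is a morphism $\mathds{1}(e)\to(S,{}^t\pi_2^{tr,S},e)$, i.e.\ an element of $({}^t\pi_2^{tr,S})_*CH_0(S)$; such a morphism is numerically trivial but need not vanish, and its action sends $CH_e(X,p^{alg}_{d_X-1})\cong\Q^n$ into the (typically huge) group $({}^t\pi_2^{tr,S})_*CH_0(S)$, so neither a dimension count nor a numerical-triviality argument kills it. To make your argument work one would have to modify Step 6 so that $p^{alg}_{d_X-1}$ is constructed inside the smaller motive cut out by $P-\sum_{2m>d_X}p^{tr}_{2m}$ (whose low-degree Chow groups still vanish), which would then genuinely force $p^{tr}_{d_X+1}\circ p^{alg}_{d_X-1}=0$; as it stands, the orthogonality you invoke is not available.
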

\begin{proof} The idempotents $p_{2i}^{alg}$ (resp. $p_{2i}^{tr}$,
  $p_{2i+1}$) factor through $\h(P_i)(i)$ (resp.
  $(S,\pi_2^{tr,S},i-1)$, $\h_1(C_i)(i)$) for some variety $P_i$
  (resp. $S$, $C_i$) of dimension $0$ (resp. $2$, $1$). For dimension
  reasons we thus actually have ${}^tp_j \circ p_i = 0$ for $|2d_X
  - i - j| > 3$. By construction, we also have ${}^t p_{2j}^{tr} \circ
  p_{2i}^{tr} = 0$. Here are the remaining cases. \medskip
 
  $\bullet$ ${}^tp_{d_X - 1} \circ p_{d_X - 1}$ acts trivially on
  $CH_*(X)$. Indeed, when $d_X$ is even, then ${}^tp_{d_X - 1} \circ
  p_{d_X - 1}$ factors through a morphism $\gamma \in
  \Hom(\h_1(C),\h_1(C)(1))$ that clearly acts trivially on
  $CH_*(\h_1(C))$. When $d_X$ is odd, there are two cases that need be
  treated. First ${}^tp_{d_X - 1}^{alg} \circ p_{d_X - 1}^{alg} = 0$
  because it factors through a morphism $\gamma \in
  \Hom(\h(P),\h(P)(1))$ for some zero-dimensional variety $P$.
  Secondly, ${}^tp_{d_X - 1}^{alg} \circ p_{d_X - 1}^{tr}$ acts
  trivially on $CH_*(X)$ because it factors through a morphism $\gamma
  \in \Hom((S,\pi_2^{tr,S}),\h(P)(2))$ for some zero-dimensional
  variety $P$ and hence $\gamma$ is seen to act trivially on
  $CH_*(S,\pi^{tr,S}_2)$.

  $\bullet$ ${}^tp_{d_X - 2} \circ p_{d_X - 1}$ acts trivially
  on $CH_*(X)$. If $d_X$ is even, then ${}^tp_{d_X - 2} \circ
  p_{d_X - 1}$ factors through a morphism $\gamma \in
  \Hom(\h_1(C),(S,\pi_2^S,1))$ that clearly acts trivially on
  $CH_*(\h_1(C))$. If $d_X$ is odd, then ${}^tp_{d_X - 2} \circ
  p_{d_X - 1}$ factors through a morphism $\gamma \in
  \Hom((S,\pi_2^S),\h_1(C)(2))$ that clearly acts trivially on
  $CH_*(S,\pi_2^S)$.

  $\bullet$ ${}^tp_{d_X - 1} \circ p_{d_X - 2}$ acts trivially
  on $CH_*(X)$. The proof is similar to the previous case and is left
  to the reader.
\end{proof}

\begin{proposition} \label{trivialaction3} The correspondence $p_j
  \circ p_i$ acts trivially on $CH_*(X)$ for all $i \neq j $ with $i,j
  \neq d_X$.
\end{proposition}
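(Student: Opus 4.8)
The plan is to reduce the statement entirely to orthogonality relations already established: the mutual orthogonality of the family $\{p_l : 0 \le l < d_X\}$ and the vanishing $p_i \circ {}^tp_j = 0$ recorded in Proposition \ref{semiorthogonality2}, together with Proposition \ref{trivialaction2}, which says that ${}^tp_j \circ p_i$ acts trivially on $CH_*(X)$ for all $i,j < d_X$. Since neither $i$ nor $j$ equals $d_X$ by hypothesis, each of the two indices lies in $\{0,\dots,d_X-1\}$ or in $\{d_X+1,\dots,2d_X\}$; I would split into the four resulting cases, systematically using the defining relation $p_l = {}^tp_{2d_X-l}$ for $l>d_X$ to bring every transpose into a position covered by the quoted results.

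In three of the four cases the composite $p_j \circ p_i$ is in fact the zero correspondence, hence certainly acts trivially on $CH_*(X)$. If $i,j<d_X$ then $p_j \circ p_i = 0$ by the mutual orthogonality of Proposition \ref{semiorthogonality2}, which holds for any $i \neq j$ regardless of their order. If $i,j>d_X$, I would rewrite $p_j \circ p_i = {}^tp_{2d_X-j} \circ {}^tp_{2d_X-i} = {}^t(p_{2d_X-i} \circ p_{2d_X-j})$ with $2d_X-i$ and $2d_X-j$ distinct and both $<d_X$, so this vanishes by the same mutual orthogonality. If $i>d_X$ and $j<d_X$, then $p_j \circ p_i = p_j \circ {}^tp_{2d_X-i}$ with $j$ and $2d_X-i$ both $<d_X$, which is zero by the relation $p_i \circ {}^tp_j = 0$ of Proposition \ref{semiorthogonality2}.

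The only remaining case is $i<d_X<j$, and here $p_j \circ p_i = {}^tp_{2d_X-j} \circ p_i$ with both $2d_X-j$ and $i$ in $\{0,\dots,d_X-1\}$; this is exactly a correspondence of the form ${}^tp_{j'} \circ p_i$ to which Proposition \ref{trivialaction2} applies verbatim, giving the desired trivial action on $CH_*(X)$. It matters that Proposition \ref{trivialaction2} is stated for all $i,j<d_X$ with no hypothesis $i \neq j$, since the subcase $i+j = 2d_X$ produces $j'=i$. I do not expect a genuine obstacle: everything beyond this last case is formal bookkeeping with transposes, and the last case is precisely the content of the preceding proposition. The only point requiring attention is to check that, after each substitution $p_l = {}^tp_{2d_X-l}$, the indices indeed fall into the ranges for which Propositions \ref{semiorthogonality2} and \ref{trivialaction2} are formulated.
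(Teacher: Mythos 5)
Your proof is correct and is precisely the paper's argument spelled out: the paper's own proof is the one-line remark that the claim follows from Propositions \ref{semiorthogonality2} and \ref{trivialaction2} via the relation $p_i = {}^tp_{2d_X-i}$, and your four-case bookkeeping (including the observation that Proposition \ref{trivialaction2} needs no hypothesis $i \neq j$ for the subcase $i+j=2d_X$) is exactly the intended expansion.
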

\begin{proof}
  Recall that for $i \neq d_X$, we have $p_i = {}^tp_{2d_X-i}$. Then
  the proposition follows from a combination of propositions
  \ref{semiorthogonality2} and \ref{trivialaction2}.
\end{proof}

\begin{proposition} \label{Caction2} Let $i \neq d_X$.  The action
  of $\Pi_{i}$ on $CH_l(X)$ coincides with the action of $p_{i}$ for
  all $l$, i.e. for all $x \in CH_l(X)$ we have $(\Pi_{i})_*x =
  (p_i)_*x$.
\end{proposition}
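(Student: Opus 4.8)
The plan is to mimic exactly the argument used for Proposition \ref{action1}: the passage from $p_i$ to $\Pi_i$ is via the non-commutative Gram-Schmidt process of lemma \ref{linalg}, and that process writes each $\Pi_i$ as a composite built out of the $p_j$'s with factors of the form $(1-\tfrac12 p_j)$. So the first thing I would do is recall the explicit inductive formula from lemma \ref{linalg}: after one application of the process, $\Pi_i^{(1)} = (1-\tfrac12 p_{?})\circ\cdots\circ p_i\circ\cdots\circ(1-\tfrac12 p_{?})$, and iterating a finite number of times produces the final $\Pi_i$. Expanding such a composite, every term other than $p_i$ itself involves a factor $p_j\circ p_i$ or ${}^tp_j\circ p_i$ (using $p_j = {}^tp_{2d_X-j}$ for $j\neq d_X$, which is exactly what lets us invoke the transpose statements) with $j\neq i$.

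Next I would observe that, by propositions \ref{trivialaction2} and \ref{trivialaction3}, every such ``cross term'' $p_j\circ p_i$ or ${}^tp_j\circ p_i$ with $i\neq j$ acts trivially on $CH_*(X)$. Hence, reading the formula of lemma \ref{linalg} from the innermost factor outward and applying it to an arbitrary class $x\in CH_l(X)$, all the correction terms die and one is left with $(\Pi_i)_*x = (p_i)_*x$. It is cleanest to check this after each single application of lemma \ref{linalg}, exactly as in the proof of proposition \ref{action1}: if $\Pi_i'$ is obtained from a family $\{p_i\}$ in which all the relevant cross-composites act trivially on Chow groups, then $(\Pi_i')_* = (p_i)_*$ on $CH_*(X)$, and moreover the new family $\{\Pi_i'\}$ still has the property that its cross-composites act trivially on Chow groups (since each $\Pi_i'$ differs from $p_i$ by terms acting trivially, and the $p_i$'s themselves satisfy this by propositions \ref{trivialaction2}, \ref{trivialaction3}). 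An easy induction on the number of times the process is applied then yields the claim.

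The only mildly delicate point — and the one I expect to be the main obstacle to writing this crisply — is bookkeeping: lemma \ref{linalg} is stated for a linearly ordered family of idempotents with $p_i\circ p_j=0$ for $i-j<k$, and here we are orthonormalising $\{p_l: l\neq d_X\}$, so I must be careful that the running family at each stage still satisfies the hypothesis ``all cross-composites $p_j\circ p_i$ and ${}^tp_j\circ p_i$ act trivially on $CH_*(X)$'' and not merely the weaker semi-orthogonality $p_j\circ p_i=0$ for $j>i$. This is fine because proposition \ref{trivialaction2} gives the full transpose statement for the original $p_i$'s, but it needs to be said explicitly that the property is preserved under one step of Gram-Schmidt. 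Concretely:

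\begin{proof}
  By proposition \ref{GS}, the idempotent $\Pi_i$ is obtained from the family $\{p_l : l\neq d_X\}$ by applying the process of lemma \ref{linalg} a finite number of times. It therefore suffices to prove the following statement, and then to iterate: if $\{P_l : l\neq d_X\}$ is a family of idempotents with $P_l = {}^tP_{2d_X-l}$ and such that $P_j\circ P_i$ and ${}^tP_j\circ P_i$ act trivially on $CH_*(X)$ for all $i\neq j$ (with $i,j\neq d_X$), and if $\{P_l' : l\neq d_X\}$ denotes the family obtained by one application of lemma \ref{linalg}, then the action of $P_i'$ on $CH_l(X)$ coincides with that of $P_i$ for all $l$, and the new family again has the property that $P_j'\circ P_i'$ and ${}^tP_j'\circ P_i'$ act trivially on $CH_*(X)$ for $i\neq j$. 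Indeed, by the formula of lemma \ref{linalg}, $P_i' = (1-\tfrac12 P_{i_N})\circ\cdots\circ P_i\circ\cdots\circ(1-\tfrac12 P_{i_1})$ for suitable indices $i_m\neq i$; expanding this composite, every term other than $P_i$ itself has the shape $Q\circ P_i$ where $Q$ is a composite containing at least one factor $P_j$ with $j\neq i$, and since $P_j\circ P_i$ acts trivially on $CH_*(X)$ and correspondences act functorially on Chow groups, the whole term acts trivially on $CH_*(X)$. Hence $(P_i')_*x = (P_i)_*x$ for all $x\in CH_*(X)$. The second assertion follows because $P_i'$ differs from $P_i$ by correspondences acting trivially on $CH_*(X)$, and $P_i = {}^tP_{2d_X-i}$; combining this with the hypothesis on the $P_j\circ P_i$ and ${}^tP_j\circ P_i$ shows that $P_j'\circ P_i'$ and ${}^tP_j'\circ P_i'$ again act trivially on $CH_*(X)$. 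The hypotheses of this statement hold for the original family $\{p_l : l\neq d_X\}$ by propositions \ref{semiorthogonality2}, \ref{trivialaction2} and \ref{trivialaction3}, and $p_l = {}^tp_{2d_X-l}$ by construction. An easy induction on the number of applications of lemma \ref{linalg} then gives $(\Pi_i)_*x = (p_i)_*x$ for all $x\in CH_l(X)$ and all $l$.
\end{proof}
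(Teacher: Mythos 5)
Your proof is correct and is essentially the paper's own argument: the paper disposes of this proposition in one line, reading the claim off the formula of lemma \ref{linalg} together with proposition \ref{trivialaction3} (all cross-composites $p_j\circ p_i$, $i\neq j$, act trivially on $CH_*(X)$, triviality of action being stable under composition on either side), which is exactly what you spell out, including the harmless extra bookkeeping that the property is preserved at each Gram--Schmidt step. The only quibble is that terms in the expansion may also carry factors to the \emph{right} of $P_i$ (so they have the shape $A\circ P_i\circ P_k\circ B$ rather than $Q\circ P_i$), but these contain the cross-composite $P_i\circ P_k$ with $k\neq i$ and die for the same reason, so nothing is lost.
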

\begin{proof}
  This can be read off the formula of lemma \ref{linalg} using
  proposition \ref{trivialaction3}.
\end{proof}

\begin{proposition} \label{actionChow2} For all $l \leq \lfloor
  \frac{d_X - 3}{2} \rfloor$ we have $CH_l(X) = (\Pi_{2l} +
  \Pi_{2l+1} + \Pi_{2l+2})_*CH_l(X)$.
\end{proposition}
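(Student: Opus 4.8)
The plan is to deduce this from Proposition \ref{actionChow}, using that the passage from the $p_i$'s to the $\Pi_i$'s does not change the action on $CH_l(X)$, and that on $CH_l(X)$ the three idempotents $p_{2l}, p_{2l+1}, p_{2l+2}$ contribute exactly the combination $p_{2l}^{alg} + p_{2l+1} + p_{2l+2}^{tr}$ that already appears in Proposition \ref{actionChow}.

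First I would record the elementary observation that for $l \leq \lfloor \frac{d_X-3}{2} \rfloor$ the three indices $2l$, $2l+1$, $2l+2$ are all strictly smaller than $d_X$ (indeed $2l+2 \leq d_X-1$ if $d_X$ is odd, and $2l+2 \leq d_X-2$ if $d_X$ is even); in particular all three of $p_{2l}$, $p_{2l+1}$, $p_{2l+2}$, as well as $p_{2l+2}^{alg}$, are among the idempotents constructed in Steps 4 and 6. Hence Proposition \ref{Caction2} may be applied to each of $\Pi_{2l}$, $\Pi_{2l+1}$, $\Pi_{2l+2}$, which gives $(\Pi_{2l} + \Pi_{2l+1} + \Pi_{2l+2})_*x = (p_{2l} + p_{2l+1} + p_{2l+2})_*x$ for every $x \in CH_l(X)$. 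It therefore suffices to establish $CH_l(X) = (p_{2l} + p_{2l+1} + p_{2l+2})_*CH_l(X)$.

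For this I would expand $p_{2l} + p_{2l+1} + p_{2l+2} = p_{2l}^{alg} + p_{2l}^{tr} + p_{2l+1} + p_{2l+2}^{alg} + p_{2l+2}^{tr}$ and compare with the combination of Proposition \ref{actionChow}; the difference is $p_{2l}^{tr} + p_{2l+2}^{alg}$, so it is enough to check that $p_{2l}^{tr}$ and $p_{2l+2}^{alg}$ both act trivially on $CH_l(X)$. For $p_{2l}^{tr}$ this follows from Proposition \ref{action1} together with the fact, used in the proof of Proposition \ref{trivialaction}, that $\pi_{2l}^{tr}$ factors through $\pi_2^{tr,S}$ and therefore kills $CH_j(X)$ for all $j \neq l-1$, in particular for $j = l$. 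For $p_{2l+2}^{alg}$ I would use that, by the construction in Step 6, $(X, p_{2l+2}^{alg})$ is isomorphic as a Chow motive to $\mathds{1}(l+1)^{\oplus m}$ for some $m$ (this is so both for the indices $2(l+1) \leq 2\lfloor\frac{d_X-3}{2}\rfloor$ and for the top index $d_X-1$ or $d_X-2$, since a direct sum of motives of points over $\C$ is $\mathds{1}^{\oplus m}$ and the twist $\lfloor\frac{d_X-1}{2}\rfloor$ equals $l+1$ in the relevant case); its Chow groups are concentrated in degree $l+1$, whence $(p_{2l+2}^{alg})_*CH_l(X) = 0$. Putting these together, $(p_{2l} + p_{2l+1} + p_{2l+2})_*$ and $(p_{2l}^{alg} + p_{2l+1} + p_{2l+2}^{tr})_*$ agree on $CH_l(X)$, and Proposition \ref{actionChow} then yields $CH_l(X) = (p_{2l} + p_{2l+1} + p_{2l+2})_*CH_l(X) = (\Pi_{2l} + \Pi_{2l+1} + \Pi_{2l+2})_*CH_l(X)$.

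The argument is essentially formal: the only points demanding attention are the index inequality $2l+2 < d_X$, which legitimises the use of Proposition \ref{Caction2} for all three projectors, and the bookkeeping of which of the $p^{alg}$- and $p^{tr}$-pieces of the $p_i$'s survive on $CH_l(X)$. I do not expect a genuine obstacle here, since all the substantial input has already been assembled in Propositions \ref{actionChow}, \ref{Caction2}, \ref{action1} and \ref{trivialaction}.
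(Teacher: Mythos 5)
Your proof is correct and follows the same route as the paper, which simply declares the statement an immediate consequence of Propositions \ref{actionChow} and \ref{Caction2}. The extra bookkeeping you supply --- checking that the discrepancy $p_{2l}^{tr}+p_{2l+2}^{alg}$ between $(p_{2l}+p_{2l+1}+p_{2l+2})$ and the combination of Proposition \ref{actionChow} kills $CH_l(X)$, including the boundary case where $p_{2l+2}^{alg}$ is the top idempotent $p_{d_X-1}^{alg}$ or $p_{d_X-2}^{alg}$ --- is exactly the detail the paper leaves implicit, and you handle it correctly.
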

\begin{proof}
  This follows immediately from propositions \ref{actionChow} and
  \ref{Caction2}.
\end{proof}

We now give the two main propositions concerning the middle
Chow-K\"unneth idempotent $\Pi_{d_X}$.

\begin{proposition} \label{oddmiddle} If $X$ is odd-dimensional, then
  there is a curve $C$ such that $(X,\Pi_{d_X})$ is isomorphic to a
  direct summand of $\h(C)(\frac{d_X-1}{2})$.
\end{proposition}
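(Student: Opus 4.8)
The plan is to show that the Chow motive $(X,\Pi_{d_X})$ has trivial Chow groups below the middle degree, and then invoke Lemma \ref{induction} repeatedly to descend to a motive of a curve twisted by the appropriate Tate twist. Since $d_X$ is odd, write $d_X = 2m+1$, so that $\lfloor \frac{d_X-3}{2}\rfloor = m-1$ and $\lfloor \frac{d_X-1}{2}\rfloor = m$. First I would recall from Proposition \ref{actionChow2} that $CH_l(X) = (\Pi_{2l}+\Pi_{2l+1}+\Pi_{2l+2})_*CH_l(X)$ for all $l \leq m-1$; since the $\Pi_i$'s are mutually orthogonal and $\Pi_{d_X} = \Delta_X - \sum_{l\neq d_X}\Pi_l$, this immediately gives $(\Pi_{d_X})_*CH_l(X) = 0$ for all $l \leq m-1$, i.e. $CH_l(X,\Pi_{d_X}) = 0$ for $l < m$.

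Next I would apply Lemma \ref{induction} iteratively. Since $CH_0(X,\Pi_{d_X}) = 0$, that lemma produces a smooth projective variety $Y_1$ of dimension $d_X - 1$ and an idempotent so that $(X,\Pi_{d_X}) \simeq (Y_1, P_1, 1)$; the Chow groups of $(Y_1,P_1)$ are then those of $(X,\Pi_{d_X})$ shifted by one, so $CH_0(Y_1,P_1) = CH_1(X,\Pi_{d_X}) = 0$, and we may iterate. After $m$ applications we obtain a smooth projective variety $Y$ and an idempotent $q \in CH_{\dim Y}(Y\times Y)$ with $(X,\Pi_{d_X}) \simeq (Y,q,m) = (Y,q,\frac{d_X-1}{2})$, and with $CH_0(Y,q) = 0$. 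The homological realisation is controlled by Proposition \ref{CK}: the self-duality $\Pi_{d_X} = {}^t\Pi_{d_X}$ together with $(\Pi_l)_*H_*(X)=H_l(X)$ for $l\neq d_X$ forces $(\Pi_{d_X})_*H_*(X) = H_{d_X}(X)$, so $H_*(Y,q) = H_*(Y,q,m)[-2m]$ is concentrated in (homological) degree $d_X - 2m = 1$.

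So $(Y,q)$ is a Chow motive with $CH_0(Y,q)=0$ and homology concentrated in degree $1$. It remains to see that such a motive is a direct summand of $\h_1$ of a curve. Here I would use exactly the machinery already invoked in the construction of $p_{d_X-1}$ in Step 6 (the constructions of \cite[\S3]{VialCK}): from $CH_0(Y,q)=0$ and the fact that the homology of $(Y,q)$ lives only in degree $1$, one extracts (via a Bloch--Srinivas-type decomposition of the diagonal, as in Lemma \ref{curve}) a curve $C$ and correspondences exhibiting $(Y,q)$ as a direct summand of $\h_1(C)$; twisting by $\frac{d_X-1}{2}$ and transporting along the isomorphism $(X,\Pi_{d_X})\simeq(Y,q,\frac{d_X-1}{2})$ gives that $(X,\Pi_{d_X})$ is a direct summand of $\h(C)(\frac{d_X-1}{2})$ (noting $\h_1(C)(\frac{d_X-1}{2})$ is a summand of $\h(C)(\frac{d_X-1}{2})$), as claimed. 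The main obstacle is the vanishing $CH_l(X,\Pi_{d_X})=0$ for $l<m$: everything downstream is formal once this is in hand, and this vanishing is precisely what Proposition \ref{actionChow2} was set up to deliver, so the real work has already been done in Steps 6--8. A secondary point requiring care is checking that the iterated application of Lemma \ref{induction} keeps the homology concentrated in the expected degree and that the final descent to a curve is compatible with the twist, but both are bookkeeping rather than substance.
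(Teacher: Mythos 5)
Your first half matches the paper: proposition \ref{actionChow2} gives $CH_l(X,\Pi_{d_X})=0$ for $l\le \frac{d_X-3}{2}=m-1$ (writing $d_X=2m+1$), and $m$ applications of lemma \ref{induction} yield $(X,\Pi_{d_X})\simeq (Y,q,m)$ with $\dim Y=m+1$. But the second half has two genuine gaps. First, you assert $CH_0(Y,q)=0$; this group is $CH_m(X,\Pi_{d_X})$, and the vanishing you have only reaches $l\le m-1$, so this is an off-by-one overreach (the $m$-th application of lemma \ref{induction} already consumed the last available vanishing, namely $CH_{m-1}$). Second, and more seriously, you cannot pass from ``$H_*(Y,q)$ is concentrated in degree $1$'' to ``$(Y,q)$ is a direct summand of $\h_1(C)$'' with the machinery you cite: the construction of \cite[\S3]{VialCK} used in Step 6 produces orthogonal idempotents $q_0,q_1\in\End((Y,q))$ whose images are a sum of motives of points and a summand of the motive of a curve, but it does \emph{not} show $q=q_0+q_1$. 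The complement $q-q_0-q_1$ is an idempotent with trivial homology, and a Chow motive with trivial homology need not vanish unless one has Chow-theoretic vanishing up to roughly half of $\dim Y=m+1$ --- which is exactly what you lack. Homological concentration cannot substitute for Chow vanishing here; that substitution is the (conjectural) Bloch--Beilinson philosophy.

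The paper closes the gap with a self-duality trick that your argument is missing. Since $\Pi_{d_X}={}^t\Pi_{d_X}$ (proposition \ref{CK}), dualizing the isomorphism $(X,\Pi_{d_X})\simeq (Y,q,m)$ gives $(X,\Pi_{d_X})\simeq (Y,{}^tq)$ with \emph{no} twist, so the same vanishing $CH_l=0$ for $l\le m-1$ now holds for $(Y,{}^tq)$, where $\dim Y=m+1$. Applying lemma \ref{induction} $m$ further times to $(Y,{}^tq)$ descends all the way to a curve $C$, exhibiting $(Y,{}^tq)$ as a direct summand of $\h(C)(m)$; dualizing back, $(Y,q)$ is a direct summand of $\h(C)$, hence $(X,\Pi_{d_X})\simeq(Y,q,m)$ is a direct summand of $\h(C)(m)$. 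In effect the duality lets you spend the vanishing range $0\le l\le m-1$ twice, once on each side, which is precisely what is needed to descend from dimension $d_X=2m+1$ to dimension $1$. I would rework your proof along these lines and drop the appeal to the $q_0,q_1$ construction entirely.
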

\begin{proof}
  By proposition \ref{actionChow2} we have $CH_l(X,\Pi_{d_X}) = 0$ for
  all $l \leq \frac{d_X-3}{2}$. Applying $\frac{d_X-1}{2}$ times lemma
  \ref{induction}, we get a smooth projective variety $Z$ of dimension
  $\frac{d_X+1}{2}$ and an idempotent $q$ such that $(X,\Pi_{d_X})
  \simeq (Z,q, \frac{d_X-1}{2})$. By proposition \ref{CK}, we have
  $\Pi_{d_X} = {}^t\Pi_{d_X}$. Therefore, by duality, we get
  $(X,\Pi_{d_X}) \simeq (Z,{}^tq)$. Thus $CH_l(Z,{}^t q) = 0$ for all
  $l \leq \frac{d_X-3}{2}$.  Applying $\frac{d_X-1}{2}$ times lemma
  \ref{induction} to $(Z,{}^t q)$, we get a curve $C$ such that
  $(Z,{}^t q)$ is isomorphic to a direct summand of
  $\h(C)(\frac{d_X-1}{2})$. Dualizing, we see that $(Z,q)$ is
  isomorphic to a direct summand of $\h(C)$. This finishes the proof.
\end{proof}

\begin{proposition} \label{evenmiddle} If $X$ is even-dimensional,
  then there is a surface $S$ such that $(X,\Pi_{d_X})$ is isomorphic
  to a direct summand of $\h(S)(\frac{d_X-2}{2})$.
\end{proposition}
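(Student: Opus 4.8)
The plan is to mirror the structure of the proof of Proposition~\ref{oddmiddle}, but with a surface in place of a curve and with particular attention to the ``transcendental'' part of the middle motive. By Proposition~\ref{actionChow2} we have $CH_l(X,\Pi_{d_X}) = 0$ for all $l \leq \lfloor \frac{d_X-3}{2} \rfloor = \frac{d_X-4}{2}$ (recall $d_X$ is even now). Applying lemma~\ref{induction} repeatedly $\frac{d_X-2}{2}$ times, we obtain a smooth projective variety $Z$ of dimension $\frac{d_X+2}{2}$ and an idempotent $q \in CH_{\dim Z}(Z \times Z)$ such that $(X,\Pi_{d_X}) \simeq (Z,q,\frac{d_X-2}{2})$. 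By the self-duality of $\Pi_{d_X}$ established in Proposition~\ref{CK} and by dualizing, $(X,\Pi_{d_X}) \simeq (Z,{}^tq)$, so that $CH_l(Z,{}^tq) = 0$ for all $l \leq \frac{d_X-4}{2}$; applying lemma~\ref{induction} another $\frac{d_X-4}{2}$ times to $(Z,{}^tq)$ produces a smooth projective surface $S$ and an idempotent such that $(Z,{}^tq)$ is a direct summand of $\h(S)(\frac{d_X-4}{2})$. Dualizing back and twisting, $(X,\Pi_{d_X})$ is a direct summand of $\h(S)(\frac{d_X-2}{2})$, as desired. The only subtlety compared with the odd case is that lemma~\ref{induction} can only be applied as long as $CH_0$ of the relevant motive vanishes, and after $\frac{d_X-2}{2}$ applications on the first pass we are down to a motive over a variety of dimension $\frac{d_X+2}{2}$ whose $CH_0$ need not vanish — but that is exactly what we want, since a surface is what we are aiming for.

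I would be careful, however, about one point: after the first round of descents we land on a variety $Z$ of dimension $\frac{d_X+2}{2}$, not $\frac{d_X}{2}$ as in Proposition~\ref{evenmiddle}'s statement after the twist ($\frac{d_X-2}{2}$ corresponds to descending $\frac{d_X-2}{2}$ steps from $d_X$ down to dimension $\frac{d_X+2}{2}$). One has to check the bookkeeping: $CH_l(X,\Pi_{d_X}) = 0$ for $l \leq \frac{d_X-4}{2}$ means lemma~\ref{induction} can be applied $\frac{d_X-2}{2}$ times (each application consumes the vanishing of $CH_0$ of the current twisted motive, which corresponds to $CH_l$ of $(X,\Pi_{d_X})$ for successive $l$), ending with a motive $(Z,q,\frac{d_X-2}{2})$ where $\dim Z = d_X - \frac{d_X-2}{2} = \frac{d_X+2}{2}$. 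Then on the dual side $CH_l(Z,{}^tq)=0$ for $l\leq \frac{d_X-4}{2}$ allows $\frac{d_X-2}{2}$ further applications, bringing the dimension down to $\frac{d_X+2}{2} - \frac{d_X-2}{2} = 2$. So the final variety is indeed a surface.

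The main obstacle, as in the odd-dimensional case, is making sure the hypothesis of lemma~\ref{induction} — vanishing of $CH_0$ of the motive at each stage — is legitimately available at every step, in particular keeping track of how the Tate twist interacts with the identification $CH_0(Z, q, n) = q_* CH_{-n}(Z)$, and noticing that the twist never makes $CH_0$ of the \emph{untwisted} motive relevant beyond the range controlled by Proposition~\ref{actionChow2}. Concretely, after $k$ applications of lemma~\ref{induction} to $(X,\Pi_{d_X})$ one obtains $(Z_k, q_k, k)$ with $\dim Z_k = d_X - k$, and $CH_0(Z_k,q_k,k) = (q_k)_* CH_{-k}(Z_k) = CH_k(X,\Pi_{d_X}) = 0$ exactly when $k \leq \frac{d_X-4}{2}$, so the $(k{+}1)$-st application is valid for $k \leq \frac{d_X-4}{2}$, i.e.\ we may apply the lemma $\frac{d_X-2}{2}$ times total. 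The self-duality of $\Pi_{d_X}$ then transfers the same vanishing to the dual motive, and the second round of descents goes through identically. No new input beyond Propositions~\ref{actionChow2} and~\ref{CK} and lemma~\ref{induction} is needed.
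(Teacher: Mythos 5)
Your proposal follows exactly the paper's route: the paper's own proof of this proposition simply says it follows the same pattern as Proposition \ref{oddmiddle}, and your two rounds of lemma \ref{induction} separated by the self-duality of $\Pi_{d_X}$ (Proposition \ref{CK}) is precisely that pattern with the correct even-dimensional bookkeeping. The only blemish is the arithmetic slip in your first paragraph ($\frac{d_X-4}{2}$ second-round applications and a twist of $\frac{d_X-4}{2}$, which would leave you on a threefold); your final paragraph's count of $\frac{d_X-2}{2}$ applications on each side, ending on a surface with twist $\frac{d_X-2}{2}$, is the correct one and supersedes it.
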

\begin{proof}
  The proof follows the exact same pattern as the proof of the
  proposition \ref{oddmiddle}.
\end{proof} \medskip

\noindent \emph{Step 11. The motivic Lefschetz conjecture for $X$.}

\begin{proposition} \label{transiso} The morphisms $\pi_{2i}^{tr}
  \circ h^{d-2i} \circ {}^t \pi_{2i}^{tr} \in
  \Hom((X,{}^t\pi_{2i}^{tr}),(X,\pi_{2i}^{tr},d-2i))$ are isomorphisms
  for $2i < d$.
\end{proposition}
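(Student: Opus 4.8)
The plan is to reduce the claim to a formal computation on the surface $S$, using that by construction $\pi^{tr}_{2i}$ is ``cut out'' of the transcendental motive $(S,\pi_2^{tr,S})$. Throughout write $d:=d_X$, $\Gamma:=\Gamma_f$ and $P:=\pi_2^{tr,S}$; since $d_S=2$, Lemma \ref{dominant} reads $\Gamma\circ h^{d-2}\circ{}^t\Gamma=n\,\Delta_S$, and from Step 2 we have ${}^t\pi^{tr}_{2i}=\pi^{tr}_{2d-2i}$ together with
$$\pi^{tr}_{2i}=\frac1n\,h^{d-1-i}\circ{}^t\Gamma\circ P\circ\Gamma\circ h^{i-1},\qquad \pi^{tr}_{2d-2i}=\frac1n\,h^{i-1}\circ{}^t\Gamma\circ P\circ\Gamma\circ h^{d-1-i}.$$
When $i=0$ one has $h^{i-1}=0$, so both $(X,{}^t\pi^{tr}_0)$ and $(X,\pi^{tr}_0,d)$ are the zero motive and there is nothing to prove; hence I would assume $i\ge 1$, in which case all exponents $i-1$, $d-1-i$, $d-2i$, $d-2$ occurring below are non-negative (using $1\le i$ and $2i<d$, which force $d\ge 3$) and powers of $h$ compose additively.

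First I would write down two comparison isomorphisms. Set $\alpha_1:=\frac1n\,h^{d-1-i}\circ{}^t\Gamma\circ P$, $\beta_1:=P\circ\Gamma\circ h^{i-1}$, $\alpha_2:=\frac1n\,h^{i-1}\circ{}^t\Gamma\circ P$, $\beta_2:=P\circ\Gamma\circ h^{d-1-i}$. Using only $P\circ P=P$ one checks $\alpha_1\circ\beta_1=\pi^{tr}_{2i}$ and $\alpha_2\circ\beta_2=\pi^{tr}_{2d-2i}$, while the identity $\Gamma\circ h^{d-2}\circ{}^t\Gamma=n\Delta_S$ gives $\beta_1\circ\alpha_1=P$ and $\beta_2\circ\alpha_2=P$. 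Thus $(\alpha_1,\beta_1)$ define an isomorphism $(S,P,i-1)\stackrel{\sim}{\longrightarrow}(X,\pi^{tr}_{2i})$ and $(\alpha_2,\beta_2)$ define an isomorphism $(S,P,d-i-1)\stackrel{\sim}{\longrightarrow}(X,\pi^{tr}_{2d-2i})=(X,{}^t\pi^{tr}_{2i})$.

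Next I would substitute these factorisations into the morphism under consideration:
$$\pi^{tr}_{2i}\circ h^{d-2i}\circ{}^t\pi^{tr}_{2i}=(\alpha_1\circ\beta_1)\circ h^{d-2i}\circ(\alpha_2\circ\beta_2)=\alpha_1\circ\big(\beta_1\circ h^{d-2i}\circ\alpha_2\big)\circ\beta_2.$$
The bracketed middle term collapses: $\beta_1\circ h^{d-2i}\circ\alpha_2=\frac1n\,P\circ\Gamma\circ h^{(i-1)+(d-2i)+(i-1)}\circ{}^t\Gamma\circ P=\frac1n\,P\circ\Gamma\circ h^{d-2}\circ{}^t\Gamma\circ P=\frac1n\,P\circ(n\Delta_S)\circ P=P$, again by Lemma \ref{dominant}. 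Since $\alpha_1\circ P=\alpha_1$ and $P\circ\beta_2=\beta_2$ this yields $\pi^{tr}_{2i}\circ h^{d-2i}\circ{}^t\pi^{tr}_{2i}=\alpha_1\circ\beta_2$. Twisting the first comparison isomorphism by $d-2i$ turns $\alpha_1$ into an isomorphism $(S,P,d-i-1)\stackrel{\sim}{\longrightarrow}(X,\pi^{tr}_{2i},d-2i)$, so that $\pi^{tr}_{2i}\circ h^{d-2i}\circ{}^t\pi^{tr}_{2i}=\alpha_1\circ\beta_2$ is the composite
$$(X,{}^t\pi^{tr}_{2i})\ \stackrel{\beta_2}{\longrightarrow}\ (S,P,d-i-1)\ \stackrel{\alpha_1}{\longrightarrow}\ (X,\pi^{tr}_{2i},d-2i)$$
of two isomorphisms, hence an isomorphism (with inverse $\alpha_2\circ\beta_1$).

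I do not anticipate a genuine obstacle: the argument is purely formal once the comparison isomorphisms are in place, and it rests only on $P\circ P=P$ and on Lemma \ref{dominant}. The two points that need attention are the bookkeeping of Tate twists — one must check that the target $(X,\pi^{tr}_{2i},d-2i)$ is indeed $(S,P,i-1)$ twisted by $d-2i$, namely $(S,P,d-i-1)$, which is precisely the codomain of $\beta_2$ — and the degenerate case $i=0$ dealt with above. Equivalently, one could phrase the computation by transporting the morphism through the comparison isomorphisms and observing that it becomes the identity $\mathrm{id}_{(S,P,d-i-1)}=P$.
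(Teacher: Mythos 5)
Your proof is correct and is essentially the paper's argument: both rest on the defining formula for $\pi^{tr}_{2i}$, the idempotency of $\pi_2^{tr,S}$, and the identity $\Gamma_f\circ h^{d-2}\circ{}^t\Gamma_f=n\Delta_S$ of Lemma \ref{dominant}. The paper simply writes down the inverse $\frac1n\,{}^t\pi^{tr}_{2i}\circ h^{i-1}\circ{}^t\Gamma_f\circ\Gamma_f\circ h^{i-1}\circ\pi^{tr}_{2i}$ directly, which after simplification equals your $\alpha_2\circ\beta_1=\frac1n\,h^{i-1}\circ{}^t\Gamma_f\circ\pi_2^{tr,S}\circ\Gamma_f\circ h^{i-1}$; your repackaging via the comparison isomorphisms with $(S,\pi_2^{tr,S},\cdot)$ is just a cleaner presentation of the same computation.
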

\begin{proof} We claim that $$\frac{1}{n} \cdot {}^t\pi_{2i}^{tr}
  \circ h^{i-1} \circ {}^t\Gamma_f \circ \Gamma_f \circ h^{i-1} \circ
  \pi_{2i}^{tr}$$ is the inverse of $\pi_{2i}^{tr} \circ h^{d-2i}
  \circ {}^t \pi_{2i}^{tr}$.  Indeed this follows from the formula
  defining the idempotents $\pi_{2i}^{tr}$ and from lemma
  \ref{dominant}.
\end{proof}

By proposition \ref{GS} the motives $(X,p_{2i})$ and $(X,\Pi_{2i})$
are isomorphic for all $2i \neq d_X$. Let's thus consider the
orthogonal decomposition $\Pi_{2i} = \Pi_{2i}^{alg} +\Pi_{2i}^{tr}$
arising from the latter isomorphism and from the decomposition $p_{2i}
= p_{2i}^{alg} +p_{2i}^{tr}$.

\begin{proposition} \label{motlef1} The morphisms $$\Pi_{2i}^{alg}
  \circ h^{d-2i} \circ {}^t \Pi_{2i}^{alg} \in
  \Hom((X,{}^t\Pi_{2i}^{alg}),(X,\Pi_{2i}^{alg},d-2i))$$ for $2i<d_X$
  and $$\Pi_{2i+1} \circ h^{d-2i-1} \circ {}^t \Pi_{2i+1} \in
  \Hom((X,{}^t\Pi_{2i+1}),(X,\Pi_{2i+1},d-2i-1))$$ for $2i+1 < d_X$
  are isomorphisms of Chow motives.
\end{proposition}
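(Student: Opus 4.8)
The plan is to reduce both assertions to the Hard Lefschetz theorem for the homology of $X$, once it is observed that the motives in play are rigid enough that a morphism between them is an isomorphism of Chow motives as soon as it induces an isomorphism on homology.

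\emph{Reduction to homology.} By construction (Steps 2 and 6) together with Proposition \ref{GS}, the Chow motive $(X,\Pi_{2i}^{alg})$ is isomorphic to a finite sum of Lefschetz motives, say $\mathds{1}(i)^{\oplus n}$; dualising, $(X,{}^t\Pi_{2i}^{alg})$ is isomorphic to $\mathds{1}(d_X-i)^{\oplus n}$, and $(X,\Pi_{2i}^{alg},d_X-2i)$ is isomorphic to $\mathds{1}(d_X-i)^{\oplus n}$ as well. Likewise $(X,\Pi_{2i+1})$ is isomorphic to a direct summand of $\h_1(C_i)(i)$ for some curve $C_i$, so that, after accounting for Tate twists, both $(X,{}^t\Pi_{2i+1})$ and $(X,\Pi_{2i+1},d_X-2i-1)$ are direct summands of $\h_1(C_i)(d_X-i-1)$. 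Hence all the $\Hom$- and $\End$-groups between the source and the target of each of the two morphisms are subspaces of $M_n(\Q)$ in the first case and of $\End(\h_1(C_i))$ in the second; on such groups rational and homological equivalence of correspondences coincide, since $\End(\mathds{1})=\Q$ and $\End(\h_1(C_i))=\End(\overline{\h}_1(C_i))$. By the standard lifting argument already used in the proof of Lemma \ref{curve}, it follows that a morphism between the motives under consideration is an isomorphism of Chow motives if and only if the induced map on homology is an isomorphism. It therefore suffices to prove the two morphisms are isomorphisms on $H_*(X)$.

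\emph{The homological statement.} By Propositions \ref{Kunneth} and \ref{GS}, $\Pi_{2i}$ (resp. $\Pi_{2i+1}$) induces on $H_*(X)$ the $2i$-th (resp. $(2i+1)$-th) K\"unneth projector, and the orthogonal splitting $\Pi_{2i}=\Pi_{2i}^{alg}+\Pi_{2i}^{tr}$ induces on $H_{2i}(X)$ a splitting $H_{2i}(X)=A\oplus T$, where $A=\im(\Pi_{2i}^{alg})$ is of Tate type (it is $\Q(i)^{\oplus n}$) and $T=\im(\Pi_{2i}^{tr})$ carries no nonzero Hodge class, since $\Pi_{2i}^{tr}$ factors through a twist of the transcendental motive of a surface. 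As $h$ is an algebraic correspondence, $h^{d_X-2i}$ acts on $H_*(X)$ as cap-product with $[H]^{d_X-2i}$, hence is a morphism of $\Q$-Hodge structures up to a twist, and by Hard Lefschetz it restricts to an isomorphism $h^{d_X-2i}\colon H_{2d_X-2i}(X)\stackrel{\sim}{\longrightarrow}H_{2i}(X)$. Now $\im({}^t\Pi_{2i}^{alg})$ is an $n$-dimensional subspace of $H_{2d_X-2i}(X)$ of Tate type; $h^{d_X-2i}$ carries it injectively onto an $n$-dimensional subspace of Tate type inside $H_{2i}(X)$, which therefore meets $T$ trivially and so maps isomorphically onto $A$ under $\Pi_{2i}^{alg}$. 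Thus $\Pi_{2i}^{alg}\circ h^{d_X-2i}\circ{}^t\Pi_{2i}^{alg}$ is an isomorphism on homology. The odd case is analogous but simpler: $\im({}^t\Pi_{2i+1})=H_{2d_X-2i-1}(X)$ and $\im(\Pi_{2i+1})=H_{2i+1}(X)$ as these are full K\"unneth components, $\Pi_{2i+1}$ acts as the identity on $H_{2i+1}(X)$, and $h^{d_X-2i-1}\colon H_{2d_X-2i-1}(X)\stackrel{\sim}{\longrightarrow}H_{2i+1}(X)$ by Hard Lefschetz, so $\Pi_{2i+1}\circ h^{d_X-2i-1}\circ{}^t\Pi_{2i+1}$ is precisely this Hard Lefschetz isomorphism.

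The only delicate point is the bookkeeping in the reduction step: one must check that the Tate twists are arranged so that the relevant $\Hom$-groups between the (duals of the) motives $(X,\Pi_{2i}^{alg})$ and $(X,\Pi_{2i+1})$ are of degree zero — that is, reduce to $\End(\mathds{1})$ or to $\End(\h_1(C_i))$ — since only then is the implication ``homological isomorphism $\Rightarrow$ rational-equivalence isomorphism'' available. Once this is in place, the homological input is simply Hard Lefschetz, together with the elementary fact that an algebraic correspondence respects the splitting of $H_{2i}(X)$ into its Tate part and the complement with no Hodge class.
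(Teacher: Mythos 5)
Your proposal is correct and shares the skeleton of the paper's proof: both arguments first use that $(X,\Pi_{2i}^{alg},-i)$ is a sum of motives of points and $(X,\Pi_{2i+1},-i)$ a direct summand of the $\h_1$ of a curve, so that the homological realization is conservative on the relevant $\Hom$-groups (the paper invokes \cite[Propositions 5.1 \& 5.2]{VialCK} for precisely the lifting argument you spell out, and your bookkeeping of the Tate twists is right), and both then reduce to an isomorphism in homology, where the odd case is literally the hard Lefschetz isomorphism $h^{d_X-2i-1}_*\colon H_{2d_X-2i-1}(X)\r H_{2i+1}(X)$. The one point where you genuinely diverge is the even case. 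The paper obtains the isomorphism $h^{d_X-2i}_*\colon H_*(X,{}^t\Pi_{2i}^{alg})\r H_*(X,\Pi_{2i}^{alg})$ from the already-established motivic isomorphism of Proposition \ref{transiso} on the transcendental summand combined with hard Lefschetz on all of $H_{2d_X-2i}(X)$ (with the vanishing $\pi_{2i}^{alg}\circ h^{d_X-2i}\circ{}^t\pi_{2i}^{tr}=0$ of Lemma \ref{lefrel} making the block decomposition triangular), whereas you argue purely Hodge-theoretically: the image of the Tate-type subspace $\im({}^t\Pi_{2i}^{alg})$ under $h^{d_X-2i}$ meets $T=\im(\Pi_{2i}^{tr})$ trivially because $T$ carries no nonzero Hodge class. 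That step is valid, but note it silently uses two standard facts you should record: the transcendental part $(\pi_2^{tr,S})_*H_2(S)$ contains no nonzero Hodge classes (Lefschetz $(1,1)$ plus nondegeneracy of the intersection form on the N\'eron--Severi part), and its image under a morphism of polarizable Hodge structures inherits this (semisimplicity). Your variant is self-contained at the level of Hodge structures and bypasses Proposition \ref{transiso}; the paper's variant stays within the cycle-theoretic relations already proved, which is what it also needs for the remaining transcendental half $\Pi_{2i}^{tr}\circ h^{d_X-2i}\circ{}^t\Pi_{2i}^{tr}$ of the motivic Lefschetz conjecture. Both routes are sound.
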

\begin{proof}
  By the hard Lefschetz theorem $ h^{d-i}_* : H^{i}(X) \r H_{i}(X)$ is
  an isomorphism for all $i \leq d$. In particular, $ h^{d-i}_* :
  H_*(X,{}^t \Pi_{i}) \r H_{*}(X,\Pi_{i})$ is an isomorphism for all
  $i \leq d$. The isomorphism of proposition \ref{transiso} together
  with the hard Lefschetz theorem in degree $2i$ implies that $
  h^{d-2i}_* : H_*(X,{}^t \Pi_{2i}^{alg}) \r H_{*}(X,\Pi_{2i}^{alg})$
  is an isomorphism. We thus see that the morphisms of the proposition
  induce isomorphisms on homology. We can now conclude with
  \cite[Propositions 5.1 \& 5.2]{VialCK} by saying that
  $(X,\Pi_{2i}^{alg},-i)$ is isomorphic to the motive of a
  zero-dimensional variety and that $(X,\Pi_{2i+1},-i)$ is isomorphic
  to a direct summand of the $\h_1$ of a curve.
 \end{proof}

\begin{lemma} \label{lefrel}
  Let $\alpha \in CH_{2i}(X \times X)$. Then \medskip

  $\bullet$ $\pi_{2j}^{tr} \circ \alpha \circ {}^t\pi_{2i}^{tr} = 0$
  for $j < i$.

  $\bullet$ $\pi_{2j+1} \circ \alpha \circ {}^t\pi_{2i}^{tr} = 0$ for
  $j < i$.

  $\bullet$ $\pi_{2j}^{alg} \circ \alpha \circ {}^t\pi_{2i}^{tr} = 0$
  for $j \leq i$.
\end{lemma}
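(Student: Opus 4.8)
The plan is to observe that in each of the three compositions the left-hand idempotent --- and likewise ${}^{t}\pi_{2i}^{tr}=\pi_{2(d_X-i)}^{tr}$ --- factors through a ``small'' motive, so that the whole composition factors through a morphism between such small motives whose codimension, as a cycle, is pinned down by Tate twists and which is then killed by Lemmas \ref{1way} and \ref{2way}. Concretely, the defining formula of the $\pi_{2k}^{tr}$'s gives ${}^{t}\pi_{2i}^{tr}=a\circ\pi_2^{tr,S}\circ b$ with $b=\Gamma_f\circ h^{d_X-i-1}$ and $a=\frac1n h^{i-1}\circ{}^{t}\Gamma_f$, so ${}^{t}\pi_{2i}^{tr}$ factors through $(S,\pi_2^{tr,S},d_X-i-1)$; similarly $\pi_{2j}^{tr}$ factors through $(S,\pi_2^{tr,S},j-1)$; and by construction (cf. Step 6) $\pi_{2j+1}$ factors through $\h_1(C_j)(j)$ for some curve $C_j$, while $\pi_{2j}^{alg}$ factors through the motive of a zero-dimensional variety $P_j$ Tate-twisted by $j$. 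Inserting these factorisations, each composition takes the form $u\circ\gamma\circ v$ with $\gamma=q\circ D\circ\pi_2^{tr,S}$ for some correspondence $D$ on $S\times T$, where $T=S$, $C_j$ or $P_j$ and $q=\pi_2^{tr,S}$, $\pi_1^{C_j}$ or $\Delta_{P_j}$ respectively.

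The next step is a bookkeeping of Tate twists: using that $\alpha\in CH_{2i}(X\times X)$ shifts the twist by $d_X-2i$, and that $\pi_{2j}^{tr}$ (resp. $\pi_{2j+1}$, $\pi_{2j}^{alg}$) enters the composition through its small motive with twist $j-1$ (resp. $j$), one finds that $\gamma\in CH_m(S\times T)$ with $m=(i-j)+2$ when $T=S$ and $m=(i-j)+1$ when $T=C_j$ or $T=P_j$; equivalently $\gamma$ has codimension $2-(i-j)$ on $S\times S$ or $S\times C_j$, and codimension $1-(i-j)$ on $S\times P_j$. Under the hypothesis $j<i$ (first two bullets) resp. $j\le i$ (third bullet) this codimension is at most $1$, and when it is negative $\gamma=0$ for dimension reasons. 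For the first bullet one may instead argue directly: $\pi_{2j}^{tr}\circ\alpha\circ{}^{t}\pi_{2i}^{tr}=\frac1{n^2}\,h^{d_X-j-1}\circ{}^{t}\Gamma_f\circ(\pi_2^{tr,S}\circ\beta\circ\pi_2^{tr,S})\circ\Gamma_f\circ h^{d_X-i-1}$ with $\beta=\Gamma_f\circ h^{j-1}\circ\alpha\circ h^{i-1}\circ{}^{t}\Gamma_f\in CH_{i-j+2}(S\times S)$, so it is enough to show $\pi_2^{tr,S}\circ\beta\circ\pi_2^{tr,S}=0$ --- the case $T=S$ of the vanishing below.

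It remains to prove that $\gamma=q\circ D\circ\pi_2^{tr,S}$ vanishes when $D$ has codimension $0$ or $1$. If $D$ has codimension $0$, then $D$ is a $\Q$-combination of fundamental classes of components of $S\times T$, so $D\circ\pi_2^{tr,S}$ sends a zero-cycle $z$ on $S$ to a multiple of $\deg\big((\pi_2^{tr,S})_*z\big)$ times a fundamental class of a component of $T$; this vanishes since $(\pi_2^{tr,S})_*$ maps $CH_0(S)$ into $CH_0(S)_{\hom}$ (Step 1). Hence $\gamma$ kills zero-cycles and, having codimension $0$ on $S\times T$, is zero by Lemma \ref{1way}. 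If $D$ has codimension $1$, then $\gamma$ has codimension $1$ on $S\times T$, and both $\gamma_*$ and $\gamma^*$ kill zero-cycles: $\gamma_*=q_*\circ D_*\circ(\pi_2^{tr,S})_*$ takes a zero-cycle on $S$ into $CH_0(S)$, then (via $D_*$, which raises dimension by one) into $CH_1(T)$, and $q_*$ annihilates $CH_1(T)$ --- for $T=S$ because $\pi_2^{tr,S}$ acts trivially on $CH_1(S)$ (Step 1), for $T=C_j$ because $\pi_1^{C_j}$ acts trivially on $CH_1(C_j)$, and for $T=P_j$ because $CH_1(P_j)=0$; symmetrically, using that $\pi_2^{tr,S}$ and $\pi_1^{C_j}$ are self-dual, $\gamma^*=({}^{t}\gamma)_*=(\pi_2^{tr,S})_*\circ({}^{t}D)_*\circ q_*$ kills zero-cycles on $T$ for the same reason. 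So $\gamma=0$ by Lemma \ref{2way}, and in every case the composition vanishes.

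The main obstacle is exactly this codimension-$1$ case: it is the tightest Tate-twist gap, and making it work forces the use of Lemma \ref{2way} and hence of the self-duality together with the $CH_1$-annihilating property of both $\pi_2^{tr,S}$ and $\pi_1^{C_j}$. It is also why the first two bullets need the strict inequality $j<i$ --- for which the codimension is $2-(i-j)\le 1$ --- while the third, in which the target is zero-dimensional so that $CH_1(P_j)=0$ makes codimension $1$ harmless, still goes through for $j\le i$. The only other point requiring care is the twist bookkeeping, in particular the one-unit discrepancy between the twist $j-1$ at which $\pi_{2j}^{tr}$ meets $(S,\pi_2^{tr,S})$ and the twist $j$ at which $\pi_{2j+1}$ and $\pi_{2j}^{alg}$ meet their small motives, which is what makes the codimension $2-(i-j)$ in the first two cases and $1-(i-j)$ in the third.
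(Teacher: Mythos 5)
Your overall strategy is the same as the paper's: factor each composition through a correspondence $\gamma$ on $S\times T$ with $T=S$, a curve $C_j$, or a zero-dimensional $P_j$; pin down $\gamma\in CH_{2+i-j}(S\times S)$, resp. $CH_{1+i-j}(S\times C_j)$, resp. $CH_{1+i-j}(S\times P_j)$ by the twist bookkeeping (which you carry out correctly); and then kill $\gamma$ either for dimension reasons or by Lemmas \ref{1way} and \ref{2way} after checking that it annihilates zero-cycles on both sides. Most of the cases are handled correctly.

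There is, however, a genuine error in the codimension-one case of the second bullet, i.e. $j=i-1$, $T=C_j$. There $\gamma\in CH^1(S\times C_j)=CH_2(S\times C_j)$, so $\gamma_*$ sends $CH_0(S)$ to $CH_{0+2-2}(C_j)=CH_0(C_j)$, not to $CH_1(C_j)$ as you assert: a codimension-one correspondence shifts the dimension of cycles by $\dim T-1$, which equals $+1$ only when $T=S$. Moreover $(\pi_1^{C_j})_*$ certainly does not annihilate $CH_0(C_j)$ --- its image there is $CH_0(C_j)_{\hom}\cong J(C_j)\otimes\Q$. So your stated reason for $\gamma_*CH_0(S)=0$ fails in exactly the tightest case, which is also the case where Lemma \ref{2way} genuinely needs \emph{both} $\gamma_*$ and $\gamma^*$ to vanish on zero-cycles. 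The correct mechanism sits on the other side of the composition: by Step 1, $(\pi_2^{tr,S})_*CH_0(S)=\ker(\mathrm{alb}_S)$, and any correspondence in $CH_2(S\times C_j)$ induces on $CH_0(S)_{\hom}\to CH_0(C_j)_{\hom}\cong J(C_j)\otimes\Q$ a homomorphism that factors through $\mathrm{alb}_S$ by the universal property of the Albanese; hence it kills the Albanese kernel, and $\gamma_*CH_0(S)=0$. This is the one place where the full Albanese-kernel property of $\pi_2^{tr,S}$ is needed, rather than merely the fact (which is all you use elsewhere) that its image in $CH_0(S)$ consists of degree-zero cycles. The analogous imprecision for $T=P_j$ (you cite $CH_1(P_j)=0$ where the relevant target is $CH_{-1}(P_j)=0$) is harmless since both groups vanish.
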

\begin{proof}
  In the first case, $\pi_{2j}^{tr} \circ \alpha \circ
  {}^t\pi_{2i}^{tr}$ factors through a correspondence $\gamma \in
  CH_{2+i-j}(S \times S)$ such that $\gamma_*z = \gamma^*z = 0$ for
  all $z \in CH_0(S)$. If $i>j+2$ then clearly $\gamma=0$. If $i=j+2$,
  lemma \ref{1way} gives $\gamma=0$. If $i=j+1$, then lemma \ref{2way}
  gives $\gamma = 0$.

  In the second case, there is a curve $C$ such that $\pi_{2j+1} \circ
  \alpha \circ {}^t\pi_{2i}^{tr}$ factors through a correspondence
  $\gamma \in CH_{1+i-j}(S \times C)$ such that $\gamma_*z = 0$ for
  all $z \in CH_0(S)$ and $\gamma^*z' = 0$ for all $z' \in CH_0(C)$.
  This implies $\gamma = 0$ by lemmas \ref{1way} and \ref{2way}.

  Finally, in the last case, there exists a zero-dimensional $P$ such
  that $\pi_{2j}^{alg} \circ \alpha \circ {}^t\pi_{2i}^{tr}$ factors
  through a correspondence $\gamma \in CH_{1+i-j}(S \times P)$ such
  that $\gamma_*z = 0$ for all $z \in CH_0(S)$ and $\gamma^*z' = 0$
  for all $z' \in CH_0(P)$. We conclude as in the previous cases.
\end{proof}

By proposition \ref{motlef1}, in order to prove the motivic Lefschetz
conjecture for $X$, it is enough to show that $\Pi_{2i}^{alg} \circ
h^{d-2i} \circ {}^t \Pi_{2i}^{tr} = 0$ and that $\Pi_{2i}^{tr} \circ
h^{d-2i} \circ {}^t \Pi_{2i}^{tr}$ is an isomorphism. The first point
follows immediately from lemma \ref{lefrel} and from the formula of
lemma \ref{linalg} defining the $\Pi_i$'s in terms of the $p_i$'s.
Concerning the second point, we know by proposition \ref{transiso}
that $\Pi_{2i}^{tr} \circ \pi_{2i}^{tr} \circ h^{d-2i} \circ {}^t
\pi_{2i}^{tr } \circ {}^t\Pi_{2i}^{tr}$ is an isomorphism with inverse
$\frac{1}{n}{}^t \Pi_{2i}^{tr} \circ \pi_{2i}^{tr} \circ h^{i-1} \circ
{}^t\Gamma_f \circ \Gamma_f \circ h^{i-1} \circ \pi_{2i}^{tr} \circ
\Pi_{2i}^{tr}$. We can therefore conclude that $\Pi_{2i}^{tr} \circ
h^{d-2i} \circ {}^t \Pi_{2i}^{tr}$ is an isomorphism if we can show
the equality $$ \Pi_{2i}^{tr} \circ h^{d-2i} \circ {}^t \Pi_{2i}^{tr}
= \Pi_{2i}^{tr} \circ \pi_{2i}^{tr} \circ h^{d-2i} \circ {}^t
\pi_{2i}^{tr } \circ {}^t\Pi_{2i}^{tr}.$$ Having a close look at the
non-commutative Gram-Schmidt process of lemma \ref{linalg} we see that
this reduces to the identities proved in lemma \ref{lefrel}.

The motivic Lefschetz conjecture for $X$ is thus established. \qed

\begin{remark}
  The morphism $\Pi_i \circ h^{d-i} \circ {}^t\Pi_i \in \Hom((X,
  {}^t\Pi_i),(X,\Pi_i,d-i))$ is an isomorphism for any choice of a
  polarisation $h$. In order to see this, we only need to check that
  $\pi_{2i}^{tr} \circ (h')^{d-2i} \circ {}^t \pi_{2i}^{tr} \in
  \Hom((X,{}^t\pi_{2i}^{tr}),(X,\pi_{2i}^{tr},d-2i))$ is an
  isomorphism for all polarisations $h'$. This follows from the fact,
  which is analogous to lemma \ref{dominant}, that for any choice of
  polarisations $h_1, \ldots, h_{d_X-d_S}$ there exists a non-zero
  integer $m$ such that $\Gamma_f \circ h_1 \circ \ldots \circ h_{d_X
    -d_S} \circ {}^t\Gamma_f = m \cdot \Delta_S \in CH_{d_S}(S \times
  S).$
\end{remark}
\medskip

\noindent \emph{Step 12. Murre's conjectures for $X$.} Thanks to
propositions \ref{CK}, \ref{oddmiddle} and \ref{evenmiddle}, the
following proposition settles Murre's conjectures (B) and (D) for $X$.

\begin{proposition}
  Let $X$ be a smooth projective variety of dimension $d$. Suppose $X$
  has a Chow-K\"unneth decomposition $\{\Pi_i\}_{0\leq i \leq 2d}$
  such that, for all $i$, \medskip

  $\bullet$ $\Pi_{2i}$ factors through a surface, i.e. there is a
  surface $S_i$ such that $(X,\Pi_{2i})$ is a direct summand of
  $\h(S_i)(i-1)$.

$\bullet$ $\Pi_{2i+1}$ factors through a curve, i.e.  there is a curve
$C_i$ such that $(X,\Pi_{2i+1})$ is a direct summand of
$\h_1(C_i)(i)$. \medskip

\noindent Then homological and algebraic equivalence agree on $X$, $X$
satisfies Murre's conjectures (A), (B) and (D), and the filtration
does not depend on the choice of a Chow-K\"unneth decomposition as
above.
\end{proposition}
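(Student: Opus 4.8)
The plan is to prove each assertion in turn, exploiting the hypothesis that the even Chow–Künneth pieces factor through surfaces and the odd ones through curves, together with the well-known validity of Murre's conjectures for curves and surfaces. First I would establish that homological and algebraic equivalence agree on $X$. Since $(X,\Pi_{2i})$ is a direct summand of $\h(S_i)(i-1)$ and $(X,\Pi_{2i+1})$ is a direct summand of $\h_1(C_i)(i)$, the group $A^j_{\hom}(X)/A^j_{\alg}(X)$ is a direct summand of the corresponding quotient for a disjoint union of surfaces and curves, shifted in degree. On curves and surfaces homological and algebraic equivalence coincide in all codimensions (for surfaces this is classical: the only nontrivial case is divisors, where $\mathrm{Pic}^0$ accounts for homologically trivial classes, and for $0$-cycles the Albanese argument combined with Mumford applies only to the filtration, not to this statement — one uses instead that $H_1$ of a surface is carried by a curve). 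Summing over the $\Pi_i$ via the Chow–Künneth decomposition $\sum \Pi_i = \Delta_X$ gives the claim for $X$.

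Next I would verify conjecture (B), namely that $\Pi_0,\ldots,\Pi_{2l-1}$ and $\Pi_{d+l+1},\ldots,\Pi_{2d}$ act trivially on $CH_l(X)$. For the low-index projectors $\Pi_j$ with $j\le 2l-1$: if $j=2i$ is even, $\Pi_{2i}$ factors through $\h(S_i)(i-1)$, so its action on $CH_l(X)$ factors through $CH_{l-i+1}(S_i)$, which vanishes when $l-i+1<0$, i.e. $i>l+1$; but one needs it for $i\le l-1$, so this direct dimension count is not enough and one must instead argue via the factorization $(X,\Pi_{2i})\hookrightarrow \h(S_i)(i-1)$ together with the fact that the composite correspondence $CH_l(X)\to CH_{l-i+1}(S_i)\to CH_l(X)$ has image controlled by the structure of the summand — here the key input is Proposition \ref{actionChow2}, which already pins down $CH_l(X)=(\Pi_{2l}+\Pi_{2l+1}+\Pi_{2l+2})_*CH_l(X)$ for $l\le\lfloor\frac{d_X-3}{2}\rfloor$, forcing all other $\Pi_j$ to act as zero on $CH_l(X)$ in that range; for the remaining values of $l$ one uses self-duality $\Pi_j={}^t\Pi_{2d-j}$ together with the dual statement. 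For the high-index projectors one dualizes. Conjecture (D), that $F^1CH_l(X)=CH_l(X)_{\hom}$, then follows: $F^1CH_l(X)=\ker(\Pi_{2l})$ acting on $CH_l(X)$, and by (B) this equals $(\Pi_{2l+1}+\Pi_{2l+2})_*CH_l(X)$ for small $l$; since $\Pi_{2l+2}$ factors through a surface and $\Pi_{2l+1}$ through a curve, the cycle class map is injective on the relevant motivic pieces precisely because $CH_i$ of curves and surfaces maps injectively to homology modulo the Albanese kernel, and one identifies the homologically trivial part with exactly these two summands.

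Conjecture (A) is given by hypothesis (the $\Pi_i$ are assumed to form a Chow–Künneth decomposition), so nothing is needed there beyond recording it. Finally, for the independence of the filtration (conjecture (C) in the strong form stated): given two Chow–Künneth decompositions $\{\Pi_i\}$ and $\{\Pi_i'\}$ both of the stated type, I would show the associated filtrations on $CH_l(X)$ agree by comparing them both to the conjectural Bloch–Beilinson filtration characterization — concretely, $F^1CH_l(X)=CH_l(X)_{\hom}$ is decomposition-independent by (D), $F^2CH_l(X)=\ker(\Pi_{2l+1})\cap CH_l(X)_{\hom}$ is the kernel of a regular homomorphism to the intermediate Jacobian-type object attached to $\h_1(C_l)$, which is intrinsic, and $F^3CH_l(X)=0$ because the only remaining piece $\Pi_{2l+2}$ factors through a surface on which $F^3=0$ (Mumford's theorem bounds the filtration length on surfaces by $2$, hence after the twist by $3$ on $X$ it bounds it by... one must be careful: the surface contributes $CH_0$ of a surface which can be infinite-dimensional, so $F^2$ on that piece need not vanish — rather $F^1=F^2=CH_0(S)_{\hom}$ there and $F^3=0$ by the surface case). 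So the filtration is $0\subseteq F^3\subseteq F^2\subseteq F^1\subseteq F^0$ with $F^3=0$, $F^1=CH_l(X)_{\hom}$ decomposition-independent, and $F^2$ given by the kernel of an intrinsic regular homomorphism.

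\medskip

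\noindent\textbf{Main obstacle.} The hard part will be the independence of the filtration (the strong form of (C)): unlike (B) and (D), which follow fairly mechanically from the factorization of the $\Pi_i$ through curves and surfaces combined with Proposition \ref{actionChow2} and self-duality, showing $F^\bullet CH_l(X)$ is the same for every Chow–Künneth decomposition of the prescribed type requires identifying $F^2CH_l(X)$ with something manifestly intrinsic — namely the kernel of the Abel–Jacobi-type map attached to the odd piece — and this forces one to know that any two choices of the curve $C_l$ (and the correspondence realizing $(X,\Pi_{2l+1})$ as a summand of $\h_1(C_l)(l)$) give canonically isomorphic $CH_0(-)_{\hom}$ after passing to the image in $CH_l(X)$. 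The cleanest route is to show that the Bloch–Beilinson-type filtration on each summand is functorial for morphisms of motives between (twisted summands of) motives of curves and surfaces, so that an isomorphism $(X,\Pi_{2l+1})\simeq(X,\Pi_{2l+1}')$ is automatically filtration-preserving; this functoriality on $\h_0$, $\h_1$, $\h_2$-pieces of curves and surfaces is classical but needs to be invoked carefully.
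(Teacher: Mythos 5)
The paper does not actually prove this proposition in-house --- its ``proof'' is a citation of \cite[Proposition 6.4]{VialCK} --- so your sketch has to stand on its own, and as written it has a genuine gap at its central step. The proposition is a statement about an \emph{arbitrary} smooth projective $X$ equipped with a Chow--K\"unneth decomposition of the prescribed shape; it is only afterwards applied to the quadric fibration. Your verification of conjecture (B) falls back on Proposition \ref{actionChow2}, which is a statement about the specific fibred variety constructed earlier in the paper (and which in any case only covers $l\leq\lfloor\tfrac{d_X-3}{2}\rfloor$; the middle range of $l$ is not recovered by transposition). Importing it here is both inapplicable to the general $X$ of the statement and circular in spirit: the entire content of the proposition is that (B) and (D) must be extracted from the factorization hypotheses alone. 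Likewise, ``for the high-index projectors one dualizes'' uses a self-duality $\Pi_j={}^t\Pi_{2d-j}$ that is not among the hypotheses; the high-index projectors must be handled by the same direct count.

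The missing mechanism is the interplay between the cycle class map and the K\"unneth property. For (B), your dimension count correctly isolates the one problematic even index $j=2l-2$: there $\Pi_{2l-2*}CH_l(X)$ is a direct summand of $CH_2(S_{l-1})\cong\Q$, which does not vanish for dimension reasons. What kills it is that $cl$ is injective on $CH_2$ of a surface, hence on this summand, whose image lies in $\Pi_{2l-2*}H_{2l}(X)\subseteq H_{2l-2}(X)\cap H_{2l}(X)=0$. The same issue resurfaces, unaddressed, in your argument for (D): the inclusion $CH_l(X)_{\hom}\subseteq\ker\Pi_{2l}$ requires $cl$ to be injective on $\Pi_{2l*}CH_l(X)$, which is a summand of $CH_1(S_l)$ --- divisors on a surface, where $cl$ is emphatically not injective ($\mathrm{Pic}^0(S_l)\neq 0$ in general). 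Your appeal to ``$CH_i$ of curves and surfaces maps injectively to homology modulo the Albanese kernel'' does not engage with this; one has to exploit that the relevant summand of $\h(S_l)$ has homology concentrated in $H_2(S_l)$ and invoke the refined Chow--K\"unneth structure of surface motives (Murre, Kahn--Murre--Pedrini) to conclude that its $CH_1$ injects into homology. These are precisely the points \cite[Proposition 6.4]{VialCK} is careful about, and without them the sketch does not close; the same caveat applies to your treatment of the independence of the filtration, which you rightly flag as the delicate part but which again must be run through the intrinsic structure of summands of motives of curves and surfaces rather than through statements proved for the fibred $X$.
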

\begin{proof} See \cite[Proposition 6.4]{VialCK}.
 \end{proof}

 \begin{remark} Let $X$ be a smooth projective variety defined over a
   subfield $k$ of $\C$. Assume that there is a flat dominant morphism
   $f : X \r S$ to a smooth projective surface $S$ defined over $k$
   such that for all field extensions $K/k$ and all points $\Spec \ K
   \r S$ the fibre $X_{\Spec \ K}$ is a quadric hypersurface. Then the
   conclusion of theorem \ref{Murrequadrics} holds for $X$, i.e. $X$
   has a self-dual Murre decomposition which satisfies the motivic
   Lefschetz conjecture.
 \end{remark}

 \begin{footnotesize}
  \bibliographystyle{plain} % or alpha or plain
  \bibliography{bib} \medskip

  \textsc{DPMMS, University of Cambridge, Wilberforce Road, Cambridge,
    CB3 0WB, UK}
  \end{footnotesize}

\textit{e-mail :}  \texttt{c.vial@dpmms.cam.ac.uk}

\end{document}